\documentclass[11pt]{article}

\usepackage[a4paper,margin=2.5cm]{geometry}
\usepackage{amsmath,amssymb,amsthm,mathtools}
\usepackage{booktabs}
\usepackage{enumitem}
\usepackage{hyperref}
\usepackage{url}
\usepackage[expansion=false]{microtype}

\numberwithin{equation}{section}
\theoremstyle{plain}
\newtheorem{theorem}{Theorem}[section]
\newtheorem{lemma}[theorem]{Lemma}
\newtheorem{proposition}[theorem]{Proposition}
\newtheorem{corollary}[theorem]{Corollary}

\theoremstyle{definition}
\newtheorem{definition}[theorem]{Definition}
\newtheorem{example}[theorem]{Example}

\theoremstyle{remark}
\newtheorem{remark}[theorem]{Remark}
\newtheorem{openproblem}[theorem]{Open Problem}

\newcommand{\N}{\mathbb{N}}
\newcommand{\Z}{\mathbb{Z}}

\begin{document}

\title{The Golden Sieve}

\author{Benoit Cloitre\\
\small Paris, France}

\date{March 2026}

\maketitle

\vspace{0.5em}
{\centering\itshape To Samuel Beatty, on the centenary of his problem.\par}
\vspace{0.5em}

\begin{abstract}
We revisit the golden sieve, a self-referential deletion process on
increasing sequences of positive integers introduced by the author
in 2002 (OEIS \texttt{A099267},~\cite{OEIS}).  Applied to the natural numbers, the sieve
produces the Wythoff pair as a Beatty partition.  For arithmetic progressions $a\N+b$,
we establish a connection with the $(j,x,y,z)$-hiccup sequences
recently studied by Fokkink and Joshi~\cite{FJ2026} and with
Fraenkel's complementary partitions.  We further introduce an
extraction sieve that also produces hiccup sequences, and whose
action on arithmetic progressions is governed by an explicit affine
transformation of hiccup parameters.
\end{abstract}

\medskip\noindent\textbf{Keywords:}
golden sieve, extraction sieve, hiccup sequence, Wythoff pair, Beatty sequence,
complementary equation, Sturmian word, Fraenkel partition

\medskip\noindent\textbf{2020 Mathematics Subject Classification:}
11B83 (primary), 11B85, 68R15, 91A46

\section{Introduction}\label{sec:intro}

Let $W_0=(w_1<w_2<\cdots)$ be an infinite strictly increasing sequence of
positive integers.  The \emph{golden sieve} on~$W_0$ is the following
deterministic deletion process.  At step~$n$, one reads the $n$-th entry
$h_n$ of the current working sequence~$W_{n-1}$.  This value serves as a
\emph{position index}, and the element sitting at position~$h_n$
in~$W_{n-1}$ is removed.  Iterating produces a partition
\[
  W_0\;=\;\{s_n : n\ge1\}\;\sqcup\;\{d_n : n\ge1\},
\]
where $\sqcup$ denotes disjoint union, $(s_n)$ is the \emph{survivor}
sequence and $(d_n)$ the \emph{deletion} sequence, both strictly
increasing.

The mechanism is elementary, yet the output is rigid.  For the main
families $W_0=\N$ and $W_0=a\N+b$, the successive gaps of $(s_n)$ take exactly two values,
the hallmark of self-referential recurrences.
The name \emph{golden sieve} reflects the fact that for $W_0=\N$ the output
is controlled by the golden ratio $\varphi=(1+\sqrt{5})/2$: survivor slopes satisfy
the same family of quadratic equations $a\alpha^2-a\alpha-1=0$ that
has $\varphi$ as its $a=1$ root.

\smallskip
\noindent\textbf{The case $W_0=\N$.}
When the sieve starts from the natural numbers, the survivor sequence
(OEIS~A099267) coincides (up to a one-step shift) with the lower Wythoff sequence, the
gap word~$H$ is the Fibonacci word, and the partition $\N=\{s_n\}\sqcup\{d_n\}$
is a Beatty partition governed by the golden ratio.  Both $(s_n)$
and $(d_n)$ satisfy a self-referential two-gap recurrence of the form
\[
  x_n = x_{n-1}+g(n), \qquad g(n)\in\{g_0,g_1\},
\]
where the gap $g(n)$ depends on whether~$n$ belongs to the value set
of the sequence $(x_n)$ itself.

\smallskip
\noindent\textbf{Arithmetic progressions $W_0=a\N+b$.}
Our main results concern this family.  We establish:
\begin{enumerate}[label=(\roman*)]
\item a \emph{pointer--survivor identity} showing that the pointer
  always reads from the survivor prefix;
\item a \emph{two-gap property}, namely that the normalized survivor gaps lie
  in~$\{1,2\}$;
\item a \emph{rank identity} relating survivors and deletions by the
  affine relation $\delta_n = a\sigma_n + n + (b-1)$
  (where $\sigma_n$, $\delta_n$ are the normalized survivors and
  deletions, Definition~\ref{def:normalized});
\item a \emph{hiccup rule} describing both $(s_n)$ and $(d_n)$ by
  explicit self-referential recurrences (Theorem~\ref{thm:hiccup-ab}).
\end{enumerate}
The partition satisfies a Fraenkel-type complementary equation,
connecting the sieve to the game-theoretic constructions
of~\cite{Fraenkel1969,Fraenkel1998} and to Kimberling's complementary
equation framework~\cite{Kimberling1993,Kimberling2007,Kimberling2008}.
We also identify the sieve with Kimberling's rank transform
(Theorem~\ref{thm:ranktransform-new}).

\smallskip
\noindent\textbf{Related work.}
The golden sieve was introduced by the author in 2002 as OEIS entry A099267~\cite{OEIS}.
The broader study of self-referential two-gap recurrences grew out of~\cite{CSV2003},
which examined monotonic sequences satisfying $a(a(n))=un+v$; many instances were
contributed to the OEIS from 2002 onward.  The author later undertook a
systematic study~\cite{Cloitre2025}.  Independently, Fokkink and
Joshi~\cite{FJ2026} studied the same family and coined the term
\emph{hiccup sequences}, which we adopt throughout.
The golden sieve provides a dynamical origin for a distinguished
subfamily of hiccup sequences.

A prominent hiccup example is the sequence A086377 studied by Bosma,
Dekking, and Steiner~\cite{BDS2018}, later covered by the general
theory of~\cite{FJ2026}.  That sequence does \emph{not} arise from the
golden sieve, since its slope $1+\sqrt{2}$ satisfies $\alpha^2=2\alpha+1$,
not the characteristic family $a\alpha^2-a\alpha-1=0$ of the golden
sieve.  It is instead the output of a different self-referential
process, the \emph{extraction sieve} $\mathcal{C}_{1,3,2}$
(the ``silver sieve''),
introduced in Section~\ref{sec:metallic}.

\smallskip
\noindent\textbf{Outline.}
Section~\ref{sec:definition} defines the sieve, works out the case
$W_0=\N$, and derives the Wythoff--Beatty identification.
Section~\ref{sec:general} develops the theory for $W_0=a\N+b$: it
establishes the rank identity, the two-gap property, and the hiccup
rule showing that the survivor sequence is a hiccup sequence in the
sense of Fokkink--Joshi~\cite{FJ2026}, with the deletion gaps governed
by the same binary control word through a filtered self-referential law.
Section~\ref{sec:fraenkel} connects the sieve to Fraenkel's
complementary equations and Kimberling's rank transform.
Section~\ref{sec:variants} studies the sieve on perfect squares.
Section~\ref{sec:metallic} introduces the extraction sieve.
Section~\ref{sec:concl} collects open questions.

\section{The golden sieve on \texorpdfstring{$\N$}{N}}\label{sec:definition}

\subsection{Notation}
Throughout this paper, $\N=\{1,2,3,\ldots\}$ denotes the set of
positive integers and $\N_0=\{0,1,2,\ldots\}$ the set of nonnegative
integers.  We write $\mathbf{1}_E$ for the indicator function of a
set or event~$E$.

For an increasing sequence $X=(x_1<x_2<\cdots)$ of positive integers,
we write $X(k)=x_k$ for its $k$th term.
If $p\ge1$, we denote by $X\setminus\{X(p)\}$ the increasing sequence
obtained by deleting the \emph{value} $X(p)$ (equivalently, deleting the
entry at \emph{position}~$p$).

\smallskip\noindent\textbf{Notational convention.}
Three related symbols appear throughout: the pointer sequence~$(h_n)$
(lowercase italic), the binary gap word~$H$ (uppercase italic,
Definition~\ref{def:gapword-H}), and Kimberling's counting
function~$\tilde h$ (lowercase with tilde,
Definition~\ref{def:ranktransform-new}).  The three are typographically
distinct and play different roles.

\subsection{Definition}

\begin{definition}[Golden sieve]\label{def:sieve}
Let $W_0 = (w_1, w_2, w_3, \ldots)$ be a strictly increasing sequence
of positive integers.  Define inductively:
\begin{itemize}
\item The \emph{pointer} at step~$n$:
  $h_n := W_{n-1}(n)$ (the $n$-th element of the current working
  sequence~$W_{n-1}$).
\item The \emph{deletion}: $d_n := W_{n-1}(h_n)$ (the element at
  position~$h_n$ in~$W_{n-1}$).
\item The \emph{updated sequence}: $W_n := W_{n-1} \setminus \{d_n\}$.
\end{itemize}
The \emph{survivor sequence} is the set of elements never deleted:
$\{s_n : n \ge 1\} := W_0 \setminus \{d_1, d_2, \ldots\}$,
listed in increasing order.
\end{definition}

We first verify that the inductive process is well-defined.

\begin{lemma}\label{lem:well-defined}
For every $n\ge0$, the working sequence $W_n$ defined in
Definition~\ref{def:sieve} is an infinite increasing sequence of positive
integers. In particular, for each $n\ge1$ the pointer $h_n=W_{n-1}(n)$ and
the deleted value $d_n=W_{n-1}(h_n)$ are well-defined.
\end{lemma}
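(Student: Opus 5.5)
We argue by induction on $n$, proving simultaneously that $W_n$ is an infinite strictly increasing sequence of positive integers and that the quantities $h_{n+1}$ and $d_{n+1}$ are well-defined. The base case $n=0$ is the hypothesis on $W_0$.

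For the inductive step, assume $W_{n-1}$ is an infinite strictly increasing sequence of positive integers. Because it is infinite, its $n$-th entry exists, so $h_n=W_{n-1}(n)$ is defined. This entry is a positive integer, since $W_{n-1}\subseteq W_0\subseteq\N$. Hence $h_n\ge1$ is a legitimate position index. Again because $W_{n-1}$ is infinite, the entry at position $h_n$ exists, so $d_n=W_{n-1}(h_n)$ is defined.

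Removing the single value $d_n$ from an infinite strictly increasing sequence leaves a sequence that is still infinite. Its order is inherited from $W_{n-1}$, so it is still strictly increasing, and its entries are still positive integers because it is a subsequence. Thus $W_n$ has the required form, which completes the induction.

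There is no real obstacle. The only point needing care is that the pointer value $h_n$ must be a valid index, which means a positive integer; a bound relative to $n$ is not needed, since the sequence is infinite. This is immediate because all entries lie in $\N$. One may also remark that $h_n\ge n$, since $W_{n-1}$ is strictly increasing in $\N$ and so $W_{n-1}(k)\ge k$; this observation is not needed here.
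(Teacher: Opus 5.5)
Your proof is correct and follows the paper's argument essentially step for step. Both use induction on $n$: removing one value from an infinite increasing sequence of positive integers leaves such a sequence, so $W_{n-1}(n)=h_n\in\N$ exists and $d_n=W_{n-1}(h_n)$ exists because $W_{n-1}$ is infinite. Your side remark $h_n\ge n$ is true but, as you say, not needed.
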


\begin{proof}
We argue by induction on~$n$. By assumption, $W_0$ is infinite increasing.
Assume $W_{n-1}$ is infinite increasing. Removing a single value from an
infinite increasing sequence yields again an infinite increasing sequence,
so $W_n=W_{n-1}\setminus\{d_n\}$ is infinite increasing.
Consequently, for $n\ge1$ the term $W_{n-1}(n)$ exists and defines the
pointer $h_n\in\N$. Since $W_{n-1}$ is infinite, its $h_n$-th term also
exists, hence $d_n=W_{n-1}(h_n)$ is well-defined.
\end{proof}

\begin{remark}%
\label{rem:degeneracy-initial}
The sieve uses a position--value feedback loop.  The pointer $h_n$ is a
\emph{value} read at position~$n$, and it is then used as a \emph{position}
to select the target $d_n=W_{n-1}(h_n)$. Lemma~\ref{lem:well-defined}
ensures that all pointers and targets exist for every step $n\ge1$.

If $w_1=1$ then $h_1=W_0(1)=1$ and the first deletion is degenerate, with
$d_1=W_0(h_1)=W_0(1)=1$ (pointer and target coincide). For the arithmetic
progressions $W_0=a\N+b$ with $a\ge2$, one has $w_1=a+b\ge3$ and this
degeneracy never occurs.
\end{remark}

\begin{lemma}\label{lem:rank-N}
Let $W_0=\N$, i.e.\ $(a,b)=(1,0)$.  Then $d_1=1$, and for every $n\ge2$
\begin{equation}\label{eq:rank-N}
  d_n = s_n + n - 1,
  \qquad\text{equivalently}\qquad
  \delta_n = \sigma_n + n - 1.
\end{equation}
\end{lemma}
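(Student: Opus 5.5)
The plan is to show three things in turn: the first $n$ entries of $W_{n-1}$ are frozen and equal to the survivors, the deletions are strictly increasing, and then to count directly. The rank identity then follows from the fact that $W_{n-1}=\N\setminus\{d_1,\dots,d_{n-1}\}$.

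\textbf{Step 1 (small cases and the frozen prefix).} A direct computation gives $h_1=1$ and $d_1=1$ (Remark~\ref{rem:degeneracy-initial}), so $W_1=(2,3,4,\dots)$. For every $m\ge2$ the sequence $W_{m-1}$ has first entry $\ge 2$, and it is strictly increasing. Hence $W_{m-1}(k)\ge k+1$ for all $k$, and in particular $h_m=W_{m-1}(m)\ge m+1>m$. So step $m$ deletes an entry at a position strictly larger than $m$, and never touches positions $1,\dots,m$ of $W_{m-1}$.

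By induction on $n$, I will show that for $n\ge2$ the first $n$ entries of $W_{n-1}$ are never deleted afterwards. Later steps $m\ge n$ only remove positions $>m\ge n$, so these entries keep their positions. It follows that $W_{n-1}(k)=s_k$ for $k\le n$, where the survivors are listed as $s_1=2,s_2=3,\dots$. In particular the pointer reads from the survivor prefix, $h_n=s_n$ for $n\ge2$. The only delicate point is the bookkeeping that the frozen prefixes are nested and exhaust the survivor set. This holds because every non-deleted value eventually lies in some frozen prefix: $W_{n-1}(n)\to\infty$, and a value below it that is still present at step $n$ is frozen.

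\textbf{Step 2 (monotonicity of deletions).} I will show that $d_{n-1}<d_n$ for $n\ge3$, and check $d_1<d_2$ by hand. Deleting one value can only shift later entries to larger values, so $W_{n-1}(p)\ge W_{n-2}(p)$ for every position $p$. Combining this with $s_n>s_{n-1}$ gives
\[
d_n=W_{n-1}(s_n)\ge W_{n-2}(s_n)>W_{n-2}(s_{n-1})=d_{n-1}.
\]

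\textbf{Step 3 (counting).} Since $W_{n-1}=\N\setminus\{d_1,\dots,d_{n-1}\}$, the entry at position $p$ with value $v$ satisfies $v=p+\#\{i<n: d_i<v\}$. Take $p=h_n=s_n$ and $v=d_n$. By Step~2 all $n-1$ earlier deletions are smaller than $d_n$, so $d_n=s_n+n-1$.

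For $(a,b)=(1,0)$ the normalization of Definition~\ref{def:normalized} should be the identity, presumably $\sigma_n=(s_n-b)/a$. This gives the equivalent form $\delta_n=\sigma_n+n-1$, which is also the specialization $a=1$, $b=0$ of the general rank identity. I expect the main obstacle to be Step~1, specifically making rigorous the identification $W_{n-1}(k)=s_k$ for $k\le n$, including the index shift caused by $s_1=2$.
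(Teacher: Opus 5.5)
Your proposal is correct and follows the paper's own route: the degenerate first step, the observation that $h_m\ge m+1$ freezes the prefix so that $h_n=s_n$ for $n\ge2$, and then counting the $n-1$ earlier deletions below $d_n$ to get $d_n=s_n+n-1$. Your Step~2 adds something useful: it proves the monotonicity $d_{n-1}<d_n$, which the paper's proof uses without justification when it says the earlier deletions are ``at most $d_n$''.
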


\begin{proof}
The first step is degenerate: $h_1=W_0(1)=1$ and $d_1=W_0(1)=1$.

For $n\ge2$, after the deletion of $d_1=1$ the working sequence $W_1=(2,3,4,\ldots)$
has $w_1=2$, so the pointer $h_n=W_{n-1}(n)\ge n+1>n$ for all $n\ge2$.
Hence at step~$n$ the deletion target $d_n=W_{n-1}(h_n)$ lies strictly beyond
position~$n$ in $W_{n-1}$, which means the prefix of length $n-1$ of $W_{n-1}$
is unchanged at the next step.  By induction, $W_{n-1}(n)=s_n$ for all $n\ge2$.

Among the positive integers $1,2,\ldots,d_n$, exactly $n-1$ have been deleted
by step~$n$ (since $d_1,\ldots,d_{n-1}$ are all distinct and at most $d_n$).
The rank of $d_n$ in the surviving integers is therefore $d_n-(n-1)$.
But this rank also equals $s_n$ (the position at which $d_n$ was selected),
so $d_n-(n-1)=s_n$, giving~\eqref{eq:rank-N}.
\end{proof}

\subsection{Example: the sieve on \texorpdfstring{$\N$}{N}}

\begin{example}\label{ex:sieve-N}
Let $W_0=(1,2,3,4,5,6,7,8,\ldots)$. The first steps are:
\[
\begin{array}{c|c|c|l}
n & h_n=W_{n-1}(n) & d_n=W_{n-1}(h_n) & \text{prefix of }W_n \\ \hline
1 & 1 & 1 & (2,3,4,5,6,7,8,9,10,\ldots) \\
2 & 3 & 4 & (2,3,5,6,7,8,9,10,11,\ldots) \\
3 & 5 & 7 & (2,3,5,6,8,9,10,11,12,\ldots) \\
4 & 6 & 9 & (2,3,5,6,8,10,11,12,13,\ldots) \\
5 & 8 & 12 & (2,3,5,6,8,10,11,13,14,\ldots)
\end{array}
\]
Continuing indefinitely produces the deleted and survivor sequences
\begin{align*}
(d_n) &= 1,4,7,9,12,15,17,20,22,25,28,30,33,\ldots,\\
(s_n) &= 2,3,5,6,8,10,11,13,14,16,18,19,21,\ldots\,.
\end{align*}
Note that $d_1 = h_1 = 1$, so the pointer and target coincide at the
first step.  This degeneracy is specific to $W_0=\N$
(where $W_0(1)=1$) and does not occur when $w_1\ge2$.
\end{example}

\subsection{The hiccup rule}\label{sec:hiccup}

We begin with the basic instance $W_0=\N$, which already contains the
self-referential phenomenon that motivates this paper. Recall from
Example~\ref{ex:sieve-N} that the sieve produces two complementary
increasing sequences, namely the deleted values $(d_n)_{n\ge1}$ and the
survivors $(s_n)_{n\ge1}$, with
\[
\{d_1,d_2,\ldots\}\ \sqcup\ \{s_1,s_2,\ldots\}=\N.
\]
Empirically one observes that the gaps of both sequences take only two
values, and that the choice is governed by a membership test involving
the sequence itself.  This self-referential two-gap recurrence defines
the \emph{hiccup} paradigm (the term is due to~\cite{FJ2026}, while the
underlying constructions were introduced by the author in the OEIS from 2002 onward~\cite{OEIS}).

\begin{definition}[Hiccup sequences]\label{def:hiccup}
Let $y,z$ be distinct positive integers, let $x\ge0$, and let
$j\in\Z_{\ge0}$.
An increasing sequence $X=(x_n)_{n\ge1}$ of positive integers
with $x_1=x$ is called a \emph{$(j,x,y,z)$-hiccup sequence}
if for every $n\ge2$,
\begin{equation}\label{eq:hiccup-def}
  x_n-x_{n-1}=
  \begin{cases}
    y & \text{if } n-j\in\{x_1,x_2,\ldots,x_{n-1}\},\\
    z & \text{otherwise.}
  \end{cases}
\end{equation}
The gap at step~$n$ is thus determined by the membership of~$n-j$
in the value set of the sequence itself.
The parameter~$j$ controls the shift in the membership test:
$j=0$ probes~$n$, $j=1$ probes~$n-1$.
When $j=0$ and $x=1$, we recover the convention
of~\cite{Cloitre2025}.  The term ``hiccup'' and the
$(j,x,y,z)$-notation are due to Fokkink--Joshi~\cite{FJ2026}.
\end{definition}

\begin{example}\label{ex:hiccup-simple}
The $(1,1,2,1)$-hiccup sequence (i.e.\ $j=1$, $x_1=1$, large gap $y=2$ on
membership, small gap $z=1$ otherwise) begins
$1,3,4,6,8,9,11,12,14,16,17,\ldots$.
At step $n=2$: probe $n-1=1$; since $1\in\{1\}$, gap is $2$, giving $x_2=3$.
At step $n=3$: probe $n-1=2$; since $2\notin\{1,3\}$, gap is $1$, giving $x_3=4$.
This is the lower Wythoff sequence $(\lfloor n\varphi\rfloor)_{n\ge1}$, OEIS~A000201.
\end{example}

The golden sieve on~$\N$ produces sequences of each type.

\begin{theorem}\label{thm:hiccup}
Let $W_0=\N$ and let $(d_n)$ and $(s_n)$ be the deleted and survivor
sequences produced by the golden sieve.
\begin{enumerate}
\item[\textup{(i)}] \textbf{Deletions.}
The deletion sequence $(d_n)$ is the $(0,1,2,3)$-hiccup
sequence with $d_1=1$ \textup{(}OEIS~A007066\textup{)}.  Explicitly,
for every $n\ge2$,
\begin{equation}\label{eq:hiccup-N}
  d_n-d_{n-1}=
  \begin{cases}
    2 & \text{if } n\in\{d_1,d_2,\ldots\},\\
    3 & \text{if } n\notin\{d_1,d_2,\ldots\}.
  \end{cases}
\end{equation}
\item[\textup{(ii)}] \textbf{Survivors.}
The survivor sequence $(s_n)$ \textup{(}OEIS~A099267\textup{)} is,
for $n\ge3$, a $(0,\,s_1,\,2,\,1)$-hiccup sequence.  Explicitly,
\begin{equation}\label{eq:hiccup-N-surv}
  s_n - s_{n-1} =
  \begin{cases}
    2 & \text{if } n\in\{s_1,s_2,\ldots\},\\
    1 & \text{if } n\notin\{s_1,s_2,\ldots\}.
  \end{cases}
\end{equation}
\textup{(}The initial gap $s_2-s_1=1$ escapes the rule because
$2\in\{s_k\}$ but the degenerate first deletion $d_1=1$ shifts the
survivor sequence by one.\textup{)}
\end{enumerate}
The gap between consecutive deletions is $2$ or $3$, the gap between
consecutive survivors is $1$ or $2$, and in both cases the selection
is governed by the value set of the sequence itself.
\end{theorem}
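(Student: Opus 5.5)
We will derive both hiccup rules from Lemma~\ref{lem:rank-N}, $d_n=s_n+n-1$ for $n\ge2$, together with the complementarity $\N=\{d_k\}\sqcup\{s_k\}$. The plan is to first pin down the survivor gaps, then transfer them to the deletions via the rank identity. Throughout, the key fact from the proof of Lemma~\ref{lem:rank-N} is that $W_{n-1}(n)=s_n$ for $n\ge2$. Hence the pointer is $h_n=s_n$, and $d_n$ is the $s_n$-th element of $W_{n-1}$.

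\textbf{Step 1: survivor gaps lie in $\{1,2\}$.} Since $d_n=s_n+n-1$ and $s_n$ is increasing, $d_{n}-d_{n-1}=(s_n-s_{n-1})+1\ge2$. So no two deletions with index $\ge2$ are consecutive integers. By complementarity, between consecutive survivors there is at most one deletion, so $s_n-s_{n-1}\in\{1,2\}$. The case $d_1=1$ is handled separately: it lies below $s_1=2$ and creates no gap issue. Consequently $d_n-d_{n-1}\in\{2,3\}$.

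\textbf{Step 2: survivor hiccup rule \eqref{eq:hiccup-N-surv}.} We have $s_n-s_{n-1}=2$ iff some deletion lies strictly between them. We must show this happens iff $n\in\{s_k\}$. The plan is to count. Let $S(m)=\#\{k:s_k\le m\}$ and $D(m)=\#\{k:d_k\le m\}$, so $S(m)+D(m)=m$. The survivors up to $s_n$ number $n$, hence $D(s_n)=s_n-n$, and $s_n-s_{n-1}=2$ iff $D(s_n)-D(s_{n-1})=1$.

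Next use the rank identity to compute $D(s_n)$ differently: $d_k\le s_n$ iff $s_k+k-1\le s_n$. Since the map $k\mapsto s_k+k-1$ is strictly increasing, $D(s_n)=1+\#\{k\ge2: s_k+k\le s_n+1\}$. Write $\nu(m)=\#\{k: s_k+k\le m\}$. The identity $D(s_n)=s_n-n$ then becomes a functional equation linking $s$ and $\nu$.

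The cleanest route is to show that the sequence $t_k=s_k+k$ (essentially $d_k+1$) is the complement of $\{s_k\}$ shifted appropriately, i.e.\ a Beatty-type complement. Incrementing $n$ then produces a jump in $D$ exactly when $s_{n}+1$ passes some $s_k+k$. This in turn happens exactly when $n$ is itself a survivor index value, because the number of $k$ with $s_k+k\le s_n$ equals $\#\{k: s_k\le n\}$ plus a constant. That last equality is the crux.

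We will prove it by strong induction on $n$, checking the base cases $n\le 5$ against Example~\ref{ex:sieve-N}. The inductive step uses only the two-gap property from Step 1 and the hypothesis for smaller indices. Concretely, we show that $D(s_n)-D(s_{n-1})=\mathbf 1_{n\in\{s_k\}}$ by expressing both sides through $S(n)-S(n-1)$. The statement fails at $n=2$ because $D$ includes the degenerate $d_1$, which matches the parenthetical in (ii).

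\textbf{Step 3: deletion hiccup rule \eqref{eq:hiccup-N}.} From $d_n-d_{n-1}=s_n-s_{n-1}+1$, Step 2 gives gap $3$ iff $n\in\{s_k\}$, i.e.\ iff $n\notin\{d_k\}$, by complementarity. This is exactly \eqref{eq:hiccup-N} for $n\ge3$. The step $n=2$ is checked directly: $d_2-d_1=3$ and $2\notin\{d_k\}$.

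The main obstacle is the counting identity in Step 2, namely establishing that $\#\{k:s_k+k\le s_n\}$ tracks $S(n)$. This is the genuinely self-referential part. If the direct induction proves awkward, we fall back on identifying $s_n$ with $\lfloor (n-1)\varphi\rfloor+1$-type formulas. We would verify this identification by induction using the rank identity, then invoke the known hiccup property of the Wythoff sequences (Example~\ref{ex:hiccup-simple}) and Beatty complementarity of $\lfloor n\varphi\rfloor$ and $\lfloor n\varphi^2\rfloor$.
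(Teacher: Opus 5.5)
Steps~1 and~3 are fine, and your fallback is essentially the paper's own route. The paper proves this theorem (Corollary~\ref{cor:hiccup-N}) by identifying $(s_n)$ with the shifted lower Wythoff sequence (Corollary~\ref{cor:wythoff}, via Lemma~\ref{lem:mex-uniqueness}) and then reading off the Wythoff gap rule. But your main line has a genuine gap in Step~2: the ``crux'' is the statement itself in disguise. For $n\ge2$ one has $\#\{k\ge1: s_k+k\le s_n\}=D(s_n)=s_n-n$. So asserting that this equals $S(n)+c$ is asserting $s_n=n+S(n)+c$, which is just the summed form of \eqref{eq:hiccup-N-surv}. Likewise ``$D(s_n)-D(s_{n-1})=\mathbf{1}_{n\in\{s_k\}}$'' is literally the rule to be proved, since its left side is $(s_n-s_{n-1})-1$. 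The promised strong induction ``using only the two-gap property and the hypothesis for smaller indices'' has no mechanism: neither ingredient tells you whether the integer $s_{n-1}+1$ is deleted. What decides that is the rank identity applied at the index of \emph{that} deletion, which you never isolate.

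The missing idea is a one-line count. For $j\ge2$, Lemma~\ref{lem:rank-N} gives $S(d_j)=d_j-D(d_j)=d_j-j=s_j-1$. Hence $s_{s_j-1}<d_j<s_{s_j}$, and the gap at position $n=s_j$ is $2$ (by Step~1). Conversely, suppose $s_n-s_{n-1}=2$. Then $s_{n-1}+1=d_j$ for some $j\ge2$ (as $s_{n-1}+1\ge3>d_1$), and comparing $S(d_j)=S(s_{n-1})=n-1$ with $S(d_j)=s_j-1$ gives $s_j=n$. So the gap-$2$ positions are exactly $\{s_j:j\ge2\}$, which for $n\ge3$ is the test $n\in\{s_k\}$. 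At $n=2$ the rule fails because $2=s_1$ is not of this form: its would-be partner $d_1=1$ lies below $s_1$, where the rank identity fails. This is the argument the paper uses for $a\ge2$ in Theorem~\ref{thm:hiccup-ab}; it needs neither induction nor a Beatty formula.

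If you keep the fallback instead, the formula must be $s_n=\lfloor(n-1)\varphi\rfloor+2$, not $+1$. The shift by $2$ in value and $1$ in index is exactly what turns the $j=1$ rule of Example~\ref{ex:hiccup-simple} into the $j=0$ rule here, and it is what produces the $n=2$ exception. Note also that the Wythoff hiccup property you would invoke is itself proved by the same count with $B_m=A_m+m$. So the fallback merely relocates the real step.
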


\begin{proof}
A self-contained proof is given later in Corollary~\ref{cor:hiccup-N},
once the general analysis of the sieve on $a\N+b$ has been established.
We record the result here as the guiding example of the paper; the initial
degeneracy at $n=2$ (due to $d_1=h_1=1$) is explained in
Remark~\ref{rem:degeneracy-initial}.
\end{proof}

\begin{remark}\label{rem:hiccup-binary}
Setting $\varepsilon_n:=\mathbf{1}_{n\notin\{d_k\}}\in\{0,1\}$,
equation~\eqref{eq:hiccup-N} reads $d_n-d_{n-1}=2+\varepsilon_n$:
the gap word is a binary encoding of the deletion set via membership bits.
\end{remark}

\subsection{Wythoff identification}\label{sec:wythoff}

For $W_0 = \N$, the sieve partition is governed by the golden ratio
$\varphi = (1+\sqrt5)/2$.

\begin{corollary}\label{cor:wythoff}
For $W_0=\N$, the survivor and deleted sequences satisfy:
\begin{align}
  s_n &= \lfloor (n-1)\varphi \rfloor + 2
       \label{eq:surv-beatty}\\
  d_n &= \lfloor (n-1)\varphi^2 \rfloor + 2
       \qquad (n\ge2).
       \label{eq:del-beatty}
\end{align}
(The degenerate first deletion is $d_1=1$.)
Setting $A_m:=s_{m+1}-2$ and $B_m:=d_{m+1}-2$ for $m\ge1$ gives the
classical Wythoff pair
$A_m = \lfloor m\varphi\rfloor$ \textup{(A000201)},\;
$B_m = \lfloor m\varphi^2\rfloor$ \textup{(A001950)},\;
$B_m = A_m + m$.
The deleted sequence $(d_n)_{n\ge1}$ coincides with A007066.
\end{corollary}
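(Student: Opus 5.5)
We derive the corollary from Theorem~\ref{thm:hiccup} and Lemma~\ref{lem:rank-N}, treating both as available. Nothing about the sieve is used beyond these two facts; the rest is classical Wythoff/Beatty material.

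\textbf{Step 1: the survivors are shifted lower Wythoff numbers.} Put $A_m:=s_{m+1}-2$ for $m\ge0$. By Theorem~\ref{thm:hiccup}(ii), $s_1=2$ and $s_2=3$, and for $n\ge3$ the gap $s_n-s_{n-1}$ equals $2$ if $n\in\{s_k\}$ and $1$ otherwise. We translate this into the index $m=n-1$. The condition $n\in\{s_k\}$ becomes $m+1\in\{A_k+2\}$, that is, $m-1\in\{A_k\}$. So for $m\ge2$ we have $A_m-A_{m-1}=2$ exactly when $m-1\in\{A_1,\dots,A_{m-1}\}$, and $A_m-A_{m-1}=1$ otherwise.

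\textbf{Step 2: check the initial values.} We have $A_1=1$ and $A_2=s_3-2=3$. Since $s_3=5$ by Example~\ref{ex:sieve-N}, this agrees with the recurrence: at $m=2$ the probe is $m-1=1=A_1$, so the gap is $2$. Hence $(A_m)_{m\ge1}$ is the $(1,1,2,1)$-hiccup sequence. By Example~\ref{ex:hiccup-simple}, this is $\lfloor m\varphi\rfloor$.

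That example only asserts the identification, so I will justify it briefly. Write $a_m=\lfloor m\varphi\rfloor$. The gap $a_m-a_{m-1}$ equals $2$ iff $m-1$ is a lower Wythoff number. This is the standard Fibonacci-word fact: $a_{a_k+1}-a_{a_k}=2$ and the other gaps equal $1$. It follows from Beatty's theorem, since lower and upper Wythoff numbers partition $\N$, $b_k=a_k+k$, and $a(a(k))+1=b(k)$. A hiccup sequence is uniquely determined by its parameters and first term, so the two sequences agree.

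Therefore $s_n=\lfloor (n-1)\varphi\rfloor+2$ for $n\ge2$. For $n=1$ the formula gives $0+2=2=s_1$, which also holds. This proves~\eqref{eq:surv-beatty}.

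\textbf{Step 3: the deletions.} By Lemma~\ref{lem:rank-N}, for $n\ge2$
\[
d_n=s_n+n-1=\lfloor (n-1)\varphi\rfloor+(n-1)+2=\lfloor (n-1)\varphi^2\rfloor+2,
\]
using $\varphi^2=\varphi+1$ and the fact that $n-1$ is an integer. This proves~\eqref{eq:del-beatty}. It also shows that $B_m=d_{m+1}-2=\lfloor m\varphi^2\rfloor=A_m+m$.

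\textbf{Step 4: identify A007066.} The identification with A007066 is immediate from Theorem~\ref{thm:hiccup}(i).

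\textbf{Main obstacle.} The only delicate point is the index bookkeeping in the shift $m=n-1$. Under this shift the $j=0$ probe for $s$ becomes a $j=1$ probe for $A$, and we must confirm that the start $m=2$ lines up with the range of validity $n\ge3$ of Theorem~\ref{thm:hiccup}(ii). The Wythoff self-reference itself is classical.
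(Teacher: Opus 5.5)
Your argument is circular in the paper's logical order. Step~1 takes Theorem~\ref{thm:hiccup}(ii) as given, but the paper never proves that theorem independently. Its proof is deferred to Corollary~\ref{cor:hiccup-N}, and there the survivor gap rule is obtained by citing the very statement you are proving: the survivors are the lower Wythoff sequence (Corollary~\ref{cor:wythoff}), whose gap word is the Fibonacci word.

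So the only genuinely sieve-theoretic input of your proof is the conclusion in disguise. That input is the claim that the survivors on $\N$ obey $s_n-s_{n-1}=1+\mathbf{1}_{n\in\{s_k\}}$ for $n\ge3$.

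The rest is sound:
\begin{itemize}
\item The shift $m=n-1$ turns the $j=0$ rule for $n\ge3$ exactly into the $(1,1,2,1)$ rule for $m\ge2$. So your separate check of $A_2$ is already covered, and the bookkeeping is not the real obstacle.
\item Hiccup sequences are determined by their parameters and first term.
\item Your derivation that $\lfloor m\varphi\rfloor$ is the $(1,1,2,1)$-hiccup, via $a(a(k))+1=b(k)$, is correct.
\item Step~3 is fine.
\end{itemize}
Step~4 inherits the same circularity, though the A007066 identification can be read off directly from \eqref{eq:del-beatty} and $d_1=1$.

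To repair the argument, derive the gap rule from Lemma~\ref{lem:rank-N} alone. This works as follows.
\begin{itemize}
\item We have $d_2-d_1=3$ and $d_n-d_{n-1}=(s_n-s_{n-1})+1\ge2$ for $n\ge3$. So no two deletions are consecutive, and survivor gaps lie in $\{1,2\}$.
\item If $s_n=s_{n-1}+2$, then $s_{n-1}+1=d_j$. Counting gives $j=s_{n-1}-n+2\ge2$, and $d_j=s_j+j-1$ then forces $n=s_j$.
\item Conversely, suppose $n=s_j$ with $j\ge2$. Then $d_j-1$ is not deleted, and exactly $d_j-j=n-1$ survivors are $\le d_j$. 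Hence $s_{n-1}=d_j-1$, so $s_{n-1}+1$ is deleted.
\end{itemize}
With this in hand, the rest of your argument goes through. It also yields Theorem~\ref{thm:hiccup}(ii) without any appeal to the Wythoff formula.

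The paper instead bypasses the gap rule entirely.
\begin{itemize}
\item From $d_n=s_n+n-1$ it gets $B_m=A_m+m$.
\item It observes $\{A_m\}\sqcup\{B_m\}=\N$: remove $s_1=2$ and $d_1=1$, then shift by $2$.
\item It concludes with Lemma~\ref{lem:mex-uniqueness} for $(a,c)=(1,0)$, which says at most one complementary pair satisfies $B_m=A_m+m$, together with the Rayleigh--Beatty theorem for the Wythoff pair.
\end{itemize}
That route is shorter and needs neither the hiccup rule nor the identity $a(a(k))+1=b(k)$.
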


\begin{proof}
A proof is given later, after the rank identity
(Proposition~\ref{thm:rank-ab}) and the mex-uniqueness lemma
(Lemma~\ref{lem:mex-uniqueness}) have been established.
\end{proof}

\begin{remark}\label{rem:beatty-intro}
The Beatty representation is specific to $a=1$.  For $a \ge 2$,
the survivor and deletion sequences are \emph{not} Beatty sequences
(Corollary~\ref{cor:beatty-dichotomy} in~\S\ref{sec:fraenkel}).
\end{remark}

The Beatty formula has a striking consequence for the Fibonacci numbers.

\begin{corollary}\label{cor:fib-invariance}
Let $(F_k)_{k\ge0}$ denote the Fibonacci sequence ($F_0=0$, $F_1=1$,
$F_{k+1}=F_k+F_{k-1}$).  Then for every $k\ge2$,
\begin{equation}\label{eq:fib-invariance}
  s(F_k) = F_{k+1}.
\end{equation}
In particular, the Fibonacci numbers form an invariant subset of
the survivor function, and $s^m(F_k) = F_{k+m}$ for all $m\ge0$.
\end{corollary}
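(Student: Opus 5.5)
We use the Beatty formula \eqref{eq:surv-beatty} of Corollary~\ref{cor:wythoff}, $s(n)=\lfloor (n-1)\varphi\rfloor+2$, and reduce the claim to a statement about $\lfloor F_k\varphi\rfloor$. Taking $n=F_k$ gives $s(F_k)=\lfloor (F_k-1)\varphi\rfloor+2$. So \eqref{eq:fib-invariance} is equivalent to
\[
  \lfloor F_k\varphi-\varphi\rfloor = F_{k+1}-2 ,
\]
that is, to the two-sided bound $F_{k+1}-2\le F_k\varphi-\varphi<F_{k+1}-1$.

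The key input is Binet's identity in the form $F_k\varphi = F_{k+1}-\psi^k$, where $\psi=(1-\sqrt5)/2=-1/\varphi$. This follows from $\varphi^{k+1}-\psi^{k+1}=\varphi(\varphi^k-\psi^k)+\psi^k(\varphi-\psi)$, or by induction from $F_{k+1}=F_k+F_{k-1}$. Substituting, the bound becomes
\[
  -2\le -\psi^k-\varphi<-1, \qquad\text{i.e.}\qquad 1-\varphi<\psi^k\le 2-\varphi .
\]
Note that $1-\varphi=\psi\approx-0.618$ and $2-\varphi\approx0.382$.

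For $k\ge2$ we have $|\psi^k|\le\psi^2=2-\varphi\approx0.382$, so the upper bound always holds. Equality occurs only at $k=2$, and the bound is non-strict there, which is fine. The lower bound $\psi^k>\psi$ holds because $|\psi^k|<|\psi|$ for $k\ge2$. This establishes \eqref{eq:fib-invariance}. The only delicate point is tracking the strict versus non-strict inequalities at the boundary case $k=2$. A direct check confirms it: $s(F_2)=s(1)=2=F_3$, matching Example~\ref{ex:sieve-N}.

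The iterate statement $s^m(F_k)=F_{k+m}$ then follows by induction on $m$. Each application of $s$ sends $F_{k+i}$ to $F_{k+i+1}$, and the index $k+i\ge2$ stays in the range where \eqref{eq:fib-invariance} applies. The same induction shows that the set $\{F_k:k\ge2\}$ is mapped into itself, which is the invariance claim.
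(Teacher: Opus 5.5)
Your proof is correct and follows essentially the same route as the paper. Both arguments substitute $n=F_k$ into the Beatty formula $s_n=\lfloor (n-1)\varphi\rfloor+2$, use Binet's identity $F_k\varphi=F_{k+1}-\psi^k$ (which the paper writes as $F_{k+1}+(-1)^{k+1}/\varphi^k$), check that the correction term $2-\varphi-\psi^k$ lies in $[0,1)$ via $|\psi^k|\le\psi^2=2-\varphi$ (with equality only at $k=2$), and then obtain $s^m(F_k)=F_{k+m}$ by induction on $m$.
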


\begin{proof}
By Corollary~\ref{cor:wythoff}, $s_n = \lfloor(n-1)\varphi\rfloor+2$
for $n\ge1$.  It therefore suffices to show that
$\lfloor(F_k-1)\varphi\rfloor = F_{k+1}-2$ for every $k\ge2$.
By the Binet formula,
\[
  F_k\,\varphi = F_{k+1} + \frac{(-1)^{k+1}}{\varphi^k}.
\]
Hence $(F_k-1)\varphi = F_{k+1}-\varphi + (-1)^{k+1}/\varphi^k$, and
\[
  \lfloor(F_k-1)\varphi\rfloor
  = F_{k+1} - 2 + \bigl\lfloor 2-\varphi+(-1)^{k+1}/\varphi^k\bigr\rfloor.
\]
Since $2-\varphi = 1/\varphi^2 \approx 0.382$
and $|1/\varphi^k|\le 1/\varphi^2$ for $k\ge2$, the
quantity $2-\varphi+(-1)^{k+1}/\varphi^k$ lies in $[0,1)$ for all
$k\ge2$ (with equality at the left endpoint only when $k=2$).
Its floor is therefore~$0$, giving
$\lfloor(F_k-1)\varphi\rfloor = F_{k+1}-2$ as claimed.
The iterated identity $s^m(F_k)=F_{k+m}$ follows by induction.
\end{proof}

\section{The golden sieve on arithmetic progressions}\label{sec:general}

\subsection{Setting and normalization}

We now develop the theory for the golden sieve on
$W_0 = a\N+b := \{an+b : n \ge 1\}$, where $a \ge 1$ and $0\le b < a$
(with the convention $a\N+0 = \{a, 2a, 3a, \ldots\}$).

\begin{definition}[Normalized sequences]\label{def:normalized}
The \emph{normalization map} $w = an+b \mapsto n$ identifies the
$n$-th element of~$W_0$ with the integer~$n$.  We write:
\begin{itemize}
\item $\sigma_n := (s_n - b)/a$ for the normalized survivors,
\item $\delta_n := (d_n - b)/a$ for the normalized deletions.
\end{itemize}
Both $(\sigma_n)$ and $(\delta_n)$ are strictly increasing sequences
of positive integers, and
$\{\sigma_n : n \ge 1\} \sqcup \{\delta_n : n \ge 1\} = \N$.
\end{definition}

The normalization reduces the sieve on~$W_0$ to a sieve on~$\N$ with
a modified pointer, namely $h_n = s_n = a\sigma_n + b$ in the original model.
For $a\ge2$ one has $h_n\ge 2n>n$, so the pointer is always strictly ahead
of position~$n$; this rules out the degenerate first step and is the source
of the $a=1$ versus $a\ge2$ dichotomy throughout the paper.

\subsection{Reduction to the index model}\label{sec:twogap}
Let $W_0=W_0(a,b)=\{an+b:n\ge1\}$ and let $\nu(an+b)=n$ as in
Definition~\ref{def:normalized}.  At each step the working sequence
$W_{n-1}$ is obtained from $W_0$ by deleting the values
$d_1,\ldots,d_{n-1}$.  Applying $\nu$ entrywise, it is convenient to
work with the corresponding working \emph{index} sequence
\[
  I_{n-1}:=\nu(W_{n-1})\subseteq\N,
\]
so that $I_{n-1}$ is the increasing sequence obtained from $\N$ by
deleting $\delta_1,\ldots,\delta_{n-1}$.
Since $W_{n-1}(k)=a\,I_{n-1}(k)+b$, the pointer and deletion indices are
\begin{equation}\label{eq:index-model}
  \eta_n = I_{n-1}(n),\qquad
  \delta_n = I_{n-1}(h_n)\qquad (n\ge1),
\end{equation}
where $h_n=a\eta_n+b$ is the pointer value in the original scale.

\subsection{Pointer--survivor identity (prefix stabilization)}

The key structural fact is that the sieve prefix stabilizes early.

\begin{lemma}\label{lem:prefix-stability}
Let $V=(v_1<v_2<\cdots)$ be a strictly increasing sequence of positive
integers, and let $V^{(p)}$ be the sequence obtained by deleting the
entry at position~$p$.  If $p>n$, then
\[
  V^{(p)}(k)=v_k \qquad (1\le k\le n).
\]
\end{lemma}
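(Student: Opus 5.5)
The plan is to write down the deleted sequence explicitly and compare it entrywise with $V$. Deleting the entry at position~$p$ from $V=(v_1<v_2<\cdots)$ produces the increasing sequence
\[
  V^{(p)}(k)=\begin{cases} v_k & \text{if } k<p,\\ v_{k+1} & \text{if } k\ge p.\end{cases}
\]
This formula comes from the notational convention that $X\setminus\{X(p)\}$ is the increasing enumeration of the remaining values.

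To justify the formula, observe the following about the remaining set $\{v_i : i\ne p\}$. For $k<p$, the elements of this set that are strictly smaller than $v_k$ are exactly $v_1,\dots,v_{k-1}$. The only removed value is $v_p$, and since $v_p>v_k$ by strict monotonicity, it is not among them. Hence $v_k$ has rank exactly $k$ in the remaining set, so $V^{(p)}(k)=v_k$.

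For $k\ge p$, the same rank count shows that $v_{k+1}$ has rank $k$. This case is not needed for the lemma.

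Now suppose $p>n$ and $1\le k\le n$. Then $k\le n<p$, so the first case applies and gives $V^{(p)}(k)=v_k$, which is the claim.

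There is no real obstacle here. The only point needing care is the rank argument: we must use strict increase of $V$ to see that the removed value $v_p$ exceeds every $v_k$ with $k<p$, so it cannot shift the ranks of the prefix.
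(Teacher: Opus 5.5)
Your proof is correct and matches the paper's approach. The paper observes that deleting position $p$ leaves positions $1,\dots,p-1$ unchanged and shifts the later entries one step left; this is exactly your explicit formula for $V^{(p)}$, and your rank argument makes that observation rigorous.
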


\begin{proof}
Deleting the entry at position~$p$ leaves positions $1,\dots,p-1$
unchanged and shifts only the entries with index $>p$ one step to the
left.  Since $p>n$, the first $n$ entries are unchanged.
\end{proof}

\begin{proposition}\label{thm:pointer-survivor}
Assume $(a,b)\neq(1,0)$.
Then for every $n\ge1$ the first $n$ elements of $W_{n-1}$ are exactly
the first $n$ surviving values $s_1,\ldots,s_n$.  In particular,
\[
  h_n=s_n \quad\text{and}\quad \eta_n=\sigma_n \qquad (n\ge1).
\]
\end{proposition}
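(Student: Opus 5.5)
The plan is to prove, by induction on $n$, the stronger statement that for every $m\ge1$ the first $m$ entries of $W_{m-1}$ are never changed again. Equivalently, every later deletion $d_k$ with $k\ge m$ occurs at a position strictly greater than $m$. This needs two ingredients. The first is the bound $h_k>k$ for every $k\ge1$, which holds whenever $(a,b)\neq(1,0)$. The second is Lemma~\ref{lem:prefix-stability}, applied repeatedly.

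\textbf{Step 1: the pointer is strictly ahead.} For every $k\ge1$, the sequence $W_{k-1}$ is a subsequence of $W_0=a\N+b$. Its $k$-th entry therefore satisfies $h_k=W_{k-1}(k)\ge W_0(k)=ak+b$. If $a\ge2$, this gives $h_k\ge 2k>k$. If $a=1$, the standing convention $0\le b<a$ forces $b=0$, which is the excluded case $(a,b)=(1,0)$. So the hypothesis amounts exactly to $a\ge2$. (If one wishes to allow $a=1,b\ge1$ as well, the same bound $h_k\ge k+b>k$ still holds.) The step uses only that deleting entries cannot decrease the $k$-th entry, i.e.\ $W_{k-1}(k)\ge W_0(k)$.

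\textbf{Step 2: prefix freezing.} Fix $n$. At step $k\ge n$, the deletion removes the entry of $W_{k-1}$ at position $h_k>k\ge n$. By Lemma~\ref{lem:prefix-stability}, $W_k(i)=W_{k-1}(i)$ for $1\le i\le n$. By induction on $k\ge n$, $W_k(i)=W_{n-1}(i)$ for all $k\ge n-1$ and $i\le n$. Hence the values $W_{n-1}(1),\dots,W_{n-1}(n)$ are never deleted, so each is a survivor.

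\textbf{Step 3: identification with $s_1,\dots,s_n$.} It remains to show that no other survivor lies below $W_{n-1}(n)$. Any survivor $s<W_{n-1}(n)$ belongs to $W_{n-1}$, since it was never deleted. It is therefore one of the first $n-1$ entries of $W_{n-1}$. Consequently the survivors not exceeding $W_{n-1}(n)$ are exactly $W_{n-1}(1)<\dots<W_{n-1}(n)$, and $W_{n-1}(i)=s_i$ for $i\le n$. In particular $h_n=W_{n-1}(n)=s_n$. Applying $\nu$ then gives $\eta_n=\nu(h_n)=\nu(s_n)=\sigma_n$ via \eqref{eq:index-model}.

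The only point requiring care is Step 1, namely checking that the hypothesis $(a,b)\neq(1,0)$ is exactly what guarantees $h_k>k$ for all $k$, including $k=1$. For $W_0=\N$ the failure occurs precisely at $h_1=1$, as noted in Remark~\ref{rem:degeneracy-initial}. Once this strict inequality is in hand, the rest is a routine induction.
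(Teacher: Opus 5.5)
Your proof is correct and follows the paper's argument essentially step for step. You use the same three ingredients: the bound $h_k=W_{k-1}(k)\ge W_0(k)=ak+b>k$ from monotonicity under deletion, prefix freezing via Lemma~\ref{lem:prefix-stability} for every step $k\ge n$, and the observation that any survivor below $W_{n-1}(n)$ must already lie among the first $n$ entries of $W_{n-1}$, after which applying $\nu$ gives $\eta_n=\sigma_n$.
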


\begin{proof}
Fix $n\ge1$.  Since $(a,b)\neq(1,0)$, we have $w_m=am+b\ge m+1$ for all
$m\ge1$.  At step~$m$, the pointer satisfies
\[
  h_m = W_{m-1}(m) \ge W_0(m) = w_m \ge m+1,
\]
because deleting entries from an increasing sequence cannot decrease its
$m$-th term.  Hence step~$m$ deletes an entry at position $h_m\ge m+1>m$.

Now let $m\ge n$.  Then $h_m\ge m+1>n$, so by
Lemma~\ref{lem:prefix-stability}, step~$m$ does not change the first~$n$
entries of the working sequence.  Therefore the prefix of length~$n$
stabilizes at time~$n-1$: the tuple
$(W_{n-1}(1),\dots,W_{n-1}(n))$ is never modified afterwards.

Since these $n$ entries are never deleted, they are survivors.
Conversely, any survivor $t<W_{n-1}(n)$ must already appear among the
first $n$ entries of $W_{n-1}$, since $W_{n-1}$ is increasing and
contains all undeleted elements up to that stage.  It follows that
\[
  (W_{n-1}(1),\dots,W_{n-1}(n)) = (s_1,\dots,s_n),
\]
and in particular $h_n=W_{n-1}(n)=s_n$.
Applying $\nu$ gives $\eta_n=\nu(h_n)=\nu(s_n)=\sigma_n$.
\end{proof}

\begin{lemma}\label{lem:mono-deletions}
Assume $(a,b)\neq(1,0)$. Then $(d_n)$ and $(\delta_n)$ are strictly
increasing.
\end{lemma}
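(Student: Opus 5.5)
By Proposition~\ref{thm:pointer-survivor}, the pointer at step~$n$ is $h_n=s_n$. Since $(s_n)$ is strictly increasing, the positions $h_n$ are strictly increasing in~$n$. I would compare $d_n$ and $d_{n+1}$ directly inside the working sequences, using that $W_n$ is obtained from $W_{n-1}$ by a single deletion at a position beyond the survivor prefix.

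Fix $n\ge1$. We have $d_n=W_{n-1}(h_n)$, and $W_n=W_{n-1}\setminus\{d_n\}$ arises by deleting position $h_n$. Positions $k<h_n$ are unchanged, and for $k\ge h_n$ we have $W_n(k)=W_{n-1}(k+1)$. The next target is $d_{n+1}=W_n(h_{n+1})$ with $h_{n+1}=s_{n+1}>s_n=h_n$, so $h_{n+1}\ge h_n$ and we are in the shifted regime:
\[
d_{n+1}=W_n(h_{n+1})=W_{n-1}(h_{n+1}+1)>W_{n-1}(h_n)=d_n .
\]
The strict inequality holds because $W_{n-1}$ is strictly increasing (Lemma~\ref{lem:well-defined}) and $h_{n+1}+1>h_n$. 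Hence $(d_n)$ is strictly increasing.

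For $(\delta_n)$, note that $\delta_n=(d_n-b)/a$ with $a\ge1$, which is a strictly increasing affine map, so monotonicity transfers at once. Alternatively, the same argument can be run in the index model~\eqref{eq:index-model}, using $I_n(k)=I_{n-1}(k+1)$ for $k\ge h_n$.

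The only delicate point is the use of $h_{n+1}>h_n$, and this is where the hypothesis $(a,b)\neq(1,0)$ enters. It is needed to get $h_n=s_n$ from Proposition~\ref{thm:pointer-survivor}. In the case $W_0=\N$ the first step is degenerate, although monotonicity still holds there. Beyond that, no further computation is needed. I would also remark that even $h_{n+1}\ge h_n$ alone would suffice, since it still gives $h_{n+1}+1>h_n$.
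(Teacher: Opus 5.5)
Your proof is correct and follows essentially the same argument as the paper: the pointer--survivor identity gives $h_{n+1}>h_n$, hence $d_{n+1}=W_n(h_{n+1})=W_{n-1}(h_{n+1}+1)>W_{n-1}(h_n)=d_n$, and monotonicity passes to $(\delta_n)$ through the increasing normalization map. Your side remark that $h_{n+1}\ge h_n$ alone would already suffice is also correct.
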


\begin{proof}
By Proposition~\ref{thm:pointer-survivor}, $(h_n)=(s_n)$ is strictly
increasing, hence $h_{n+1}>h_n$.
At step $n$ the value $d_n$ is deleted from position $h_n$ of $W_{n-1}$.
Since $h_{n+1}>h_n$, the element that will be read at position $h_{n+1}$
in $W_n$ was at position $h_{n+1}+1$ in $W_{n-1}$. Therefore
\[
  d_{n+1}=W_n(h_{n+1})=W_{n-1}(h_{n+1}+1) > W_{n-1}(h_n)=d_n.
\]
Applying $\nu$ yields $\delta_{n+1}>\delta_n$.
\end{proof}

\subsection{Rank identity}

We now derive the affine relation between survivors and deletions.

\begin{proposition}\label{thm:rank-ab}
Assume $(a,b)\neq(1,0)$. For every $n\ge1$,
\begin{equation}\label{eq:rank-ab}
  \delta_n = a\sigma_n + n + (b-1).
\end{equation}
Equivalently, in the original scale,
$d_n = a\delta_n+b = a\bigl(a\sigma_n + n + (b-1)\bigr)+b$.
\end{proposition}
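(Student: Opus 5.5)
The plan is to work entirely in the index model of \S\ref{sec:twogap} and to prove \eqref{eq:rank-ab} by a rank count, exactly as in Lemma~\ref{lem:rank-N} but without the degenerate first step. By \eqref{eq:index-model}, $\delta_n=I_{n-1}(h_n)$, where $I_{n-1}$ is the increasing sequence obtained from $\N$ by deleting $\delta_1,\dots,\delta_{n-1}$. By Proposition~\ref{thm:pointer-survivor}, the pointer value is $h_n=s_n=a\sigma_n+b$. So it suffices to show that the entry at position $h_n$ of $I_{n-1}$ equals $h_n+(n-1)$.

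For this I will count the elements of $I_{n-1}$ that are at most $\delta_n$. By Lemma~\ref{lem:mono-deletions} the normalized deletions are strictly increasing, so $\delta_1<\delta_2<\cdots<\delta_{n-1}<\delta_n$. Hence all $n-1$ indices removed before step $n$ lie in $\{1,\dots,\delta_n-1\}$. Also, $\delta_n$ itself has not been removed yet: it is distinct from the earlier $\delta_k$, since it is being read from $I_{n-1}$.

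Therefore $I_{n-1}\cap[1,\delta_n]$ consists of exactly $\delta_n-(n-1)$ integers, and $\delta_n$ is its largest element. In other words, the rank of $\delta_n$ in $I_{n-1}$ is $\delta_n-n+1$. This rank is by definition the position $h_n$, which gives
\[
  \delta_n-n+1=h_n=a\sigma_n+b,
\]
and rearranging yields \eqref{eq:rank-ab}. The original-scale form then follows by substituting into $d_n=a\delta_n+b$.

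The only step that needs care is the claim that every earlier deletion index lies strictly below $\delta_n$. Without it, the count of removed elements below $\delta_n$ could be smaller than $n-1$. This claim is exactly the monotonicity of Lemma~\ref{lem:mono-deletions}, which in turn rests on the pointer--survivor identity and on the hypothesis $(a,b)\neq(1,0)$. So no further obstacle is expected. As a check, for $n=1$ the formula gives $\delta_1=a\sigma_1+b=h_1$, which is consistent with $I_0=\N$.
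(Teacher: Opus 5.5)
Your proof is correct and follows essentially the same route as the paper. The pointer--survivor identity gives $h_n=a\sigma_n+b$, and monotonicity of the deletions (Lemma~\ref{lem:mono-deletions}) shows that exactly $n-1$ removed indices lie below $\delta_n$. Equating the two expressions for the rank of $\delta_n$ in $I_{n-1}$, namely $\delta_n-(n-1)=h_n$, then yields \eqref{eq:rank-ab}; your explicit check that $\delta_n$ itself is still present in $I_{n-1}$ is a point the paper leaves implicit.
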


\begin{proof}
Fix $n\ge1$. By Proposition~\ref{thm:pointer-survivor} we have
$h_n=a\sigma_n+b$. In the index model, $I_{n-1}$ is obtained from $\N$ by
deleting the $n-1$ indices $\delta_1<\cdots<\delta_{n-1}$
(Lemma~\ref{lem:mono-deletions}).
The deleted index $\delta_n$ is the element of $I_{n-1}$ at \emph{position}
$h_n$ (see \eqref{eq:index-model}), so the rank of $\delta_n$ inside
$I_{n-1}$ is $h_n$.

Among the integers $\le \delta_n$, exactly $n-1$ have been deleted, hence
the rank of $\delta_n$ in $I_{n-1}$ is $\delta_n-(n-1)$. Therefore
\[
  \delta_n-(n-1)=h_n=a\sigma_n+b,
\]
which rearranges to \eqref{eq:rank-ab}.
\end{proof}

\subsection{Two-gap property and gap dichotomy for deletions}

\begin{proposition}\label{thm:two-gap}
Assume $a\ge2$. Then the normalized survivor gaps satisfy
\[
  \sigma_n-\sigma_{n-1}\in\{1,2\}\qquad (n\ge2).
\]
\end{proposition}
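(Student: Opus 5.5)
Since $(\sigma_n)$ is strictly increasing (Proposition~\ref{thm:pointer-survivor}), the lower bound $\sigma_n-\sigma_{n-1}\ge1$ is automatic. The whole task is therefore to exclude gaps of size $\ge3$. Equivalently, we must show that no two consecutive integers both lie in the deletion set $\{\delta_k\}$. Indeed, $\{\sigma_n\}\sqcup\{\delta_n\}=\N$ (Definition~\ref{def:normalized}), so a survivor gap $\sigma_n-\sigma_{n-1}\ge3$ would force the two consecutive integers $\sigma_{n-1}+1,\ \sigma_{n-1}+2$ to be deleted indices.

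The plan is to use the rank identity, Proposition~\ref{thm:rank-ab}, which gives $\delta_k=a\sigma_k+k+(b-1)$. Since $\sigma_{k+1}\ge\sigma_k+1$, we get
\[
\delta_{k+1}-\delta_k = a(\sigma_{k+1}-\sigma_k)+1\ \ge\ a+1\ \ge 3
\]
for $a\ge2$. So consecutive deletions differ by at least $3$, and in particular no two consecutive integers are both deleted. Now suppose some survivor gap satisfied $\sigma_n-\sigma_{n-1}\ge3$. Then $\sigma_{n-1}+1$ and $\sigma_{n-1}+2$ are not survivors, hence both are deletion indices. Being consecutive integers in the increasing sequence $(\delta_k)$ (Lemma~\ref{lem:mono-deletions}), they would be $\delta_k,\delta_{k+1}$ for some $k$ with difference $1$, a contradiction.

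There is one subtlety to check: the range $n\ge2$, together with the possibility that the deleted integers lie below $\sigma_1$. That case is irrelevant here, because we only ever consider integers strictly between $\sigma_{n-1}$ and $\sigma_n$, so the argument applies uniformly. I expect no real obstacle. The only care needed is to confirm that the rank identity holds for all $k\ge1$ under $a\ge2$; it does, since $a\ge2$ implies $(a,b)\neq(1,0)$. As a by-product, the argument gives the deletion gap dichotomy $\delta_{k+1}-\delta_k\in\{a+1,2a+1\}$ once the survivor two-gap property is known.
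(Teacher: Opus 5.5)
Your proof is correct and follows the paper's argument: differencing the rank identity gives $\delta_n-\delta_{n-1}=a(\sigma_n-\sigma_{n-1})+1\ge a+1\ge 3$, so no two deleted indices are consecutive, and complementarity then caps each survivor gap at $2$. Your remarks on the lower bound and on the rank identity being available because $a\ge2$ forces $(a,b)\neq(1,0)$ spell out what the paper leaves implicit.
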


\begin{proof}
Subtract \eqref{eq:rank-ab} at $n$ and $n-1$:
\[
  \delta_n-\delta_{n-1}
  = a(\sigma_n-\sigma_{n-1})+1.
\]
Since $\sigma_n-\sigma_{n-1}\ge1$ we get $\delta_n-\delta_{n-1}\ge a+1\ge3$.
Thus no two deleted indices are consecutive.
Because $(\sigma_n)$ is the increasing complement of $(\delta_n)$ in $\N$,
between two successive survivors there can be at most one missing integer,
so $\sigma_n-\sigma_{n-1}\le2$.
\end{proof}

\begin{definition}[Binary gap word]\label{def:gapword-H}
For $n\ge2$ define the binary word
\begin{equation}\label{eq:H-def}
  H(n):=(\sigma_n-\sigma_{n-1})-1\in\{0,1\}.
\end{equation}
Thus $H(n)=0$ corresponds to a survivor gap~$1$, and $H(n)=1$ to a gap~$2$.
\end{definition}

The same word controls the deletion gaps.

\begin{proposition}\label{prop:del-gaps-H}
Assume $(a,b)\neq(1,0)$. For every $n\ge2$,
\begin{align}
  \delta_n-\delta_{n-1} &= (a+1)+a\,H(n), \label{eq:delta-gaps-H}\\
  d_n-d_{n-1} &= a(a+1)+a^2\,H(n). \label{eq:d-gaps-H}
\end{align}
Equivalently, $\delta_n-\delta_{n-1}\in\{a+1,2a+1\}$ and
$d_n-d_{n-1}\in\{a(a+1),a(2a+1)\}$, with the larger gap occurring
exactly when $\sigma_n=\sigma_{n-1}+2$.
\end{proposition}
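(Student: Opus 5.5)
The plan is a direct computation from the rank identity, with no new combinatorial input. Note first that under the standing convention $a\ge1$, $0\le b<a$, the hypothesis $(a,b)\neq(1,0)$ is the same as $a\ge2$, since $a=1$ forces $b=0$. Hence Proposition~\ref{thm:two-gap} applies, $H(n)\in\{0,1\}$ is well defined for every $n\ge2$, and by Definition~\ref{def:gapword-H}
\[
  \sigma_n-\sigma_{n-1}=1+H(n)\qquad(n\ge2).
\]

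Next I subtract \eqref{eq:rank-ab} at indices $n$ and $n-1$, as in the proof of Proposition~\ref{thm:two-gap}. The terms $(b-1)$ cancel and the $n$ versus $n-1$ terms contribute $1$, so
\[
  \delta_n-\delta_{n-1}=a(\sigma_n-\sigma_{n-1})+1 .
\]
Substituting $\sigma_n-\sigma_{n-1}=1+H(n)$ then gives $(a+1)+aH(n)$, which is \eqref{eq:delta-gaps-H}.

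For the original scale I use $d_k=a\delta_k+b$ from Definition~\ref{def:normalized}. Then $d_n-d_{n-1}=a(\delta_n-\delta_{n-1})$, and multiplying \eqref{eq:delta-gaps-H} by $a$ yields \eqref{eq:d-gaps-H}.

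The equivalent forms follow by evaluating at $H(n)=0$ and $H(n)=1$. Since $a\ge1$, the two values $a+1<2a+1$ (respectively $a(a+1)<a(2a+1)$) are distinct. Therefore the larger gap occurs exactly when $H(n)=1$, i.e.\ when $\sigma_n=\sigma_{n-1}+2$.

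There is no real obstacle here. The only point needing care is to observe explicitly that $(a,b)\neq(1,0)$ forces $a\ge2$, so that the two-gap property, and hence the binary word $H$, is available under the stated hypothesis.
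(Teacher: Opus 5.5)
Your proof is correct and is essentially the paper's argument: difference the rank identity $\delta_n=a\sigma_n+n+(b-1)$ to get $\delta_n-\delta_{n-1}=a(\sigma_n-\sigma_{n-1})+1$, substitute $\sigma_n-\sigma_{n-1}=1+H(n)$, and multiply by $a$ using $d_k=a\delta_k+b$. You also point out that $(a,b)\neq(1,0)$ forces $a\ge2$ under the convention $0\le b<a$, so the two-gap property applies and $H$ is binary; the paper leaves this implicit, and it is a worthwhile clarification.
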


\begin{proof}
Equation \eqref{eq:delta-gaps-H} is the difference identity
$\delta_n-\delta_{n-1}=a(\sigma_n-\sigma_{n-1})+1$
rewritten using $\sigma_n-\sigma_{n-1}=1+H(n)$. Multiplying by $a$ gives
\eqref{eq:d-gaps-H}.
\end{proof}

\subsection{The binary gap word}\label{sec:gapword}

Recall from Definition~\ref{def:gapword-H} that $H(n):=(\sigma_n-\sigma_{n-1})-1\in\{0,1\}$
encodes the two-gap phenomenon, with $H(n)=0$ for a gap of~$1$ and $H(n)=1$ for a gap of~$2$.
By Proposition~\ref{prop:del-gaps-H}, the same word controls the deletion gaps:
\[
  \delta_n-\delta_{n-1}=(a+1)+a\,H(n)\qquad(n\ge2).
\]

\subsubsection{Reconstructing \texorpdfstring{$\sigma$}{sigma} from $H$}

For $a\ge2$, the first normalized survivor is $\sigma_1=1$
(since the first deletion index satisfies $\delta_1\ge2$).
The word $H$ then determines the full survivor sequence by telescoping.

\begin{lemma}\label{lem:reconstruct-sigma}
Assume $a\ge2$. Then for every $n\ge1$,
\begin{equation}\label{eq:sigma-from-H}
  \sigma_n = n + \sum_{k=2}^{n} H(k),
\end{equation}
with the convention that the empty sum is $0$ for $n=1$.
\end{lemma}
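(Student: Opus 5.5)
The plan is a telescoping argument anchored by the base case $\sigma_1=1$. The base case has to be established first, since the surrounding text only asserts it in passing.

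\textbf{Base case.} Assume $a\ge2$. By Proposition~\ref{thm:rank-ab} with $n=1$,
\[
\delta_1=a\sigma_1+b\ge a\ge2 .
\]
Hence the index $1$ is never deleted. By Lemma~\ref{lem:mono-deletions} the deletions are strictly increasing, so $\delta_n\ge\delta_1\ge2$ for every $n$. Since $\{\sigma_n\}\sqcup\{\delta_n\}=\N$ (Definition~\ref{def:normalized}), the integer $1$ must be a survivor. Being the smallest positive integer, it is the smallest survivor, so $\sigma_1=1$.

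A more direct route is also available. By Proposition~\ref{thm:pointer-survivor}, $s_1=h_1=W_0(1)$, because $W_0(1)$ is never deleted. Therefore $\sigma_1=\nu(W_0(1))=1$.

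\textbf{Telescoping.} For $n\ge2$, Definition~\ref{def:gapword-H} gives
\[
\sigma_k-\sigma_{k-1}=1+H(k)\qquad(2\le k\le n).
\]
This is well-defined because Proposition~\ref{thm:two-gap} guarantees $H(k)\in\{0,1\}$. Summing these identities over $k=2,\dots,n$ gives
\[
\sigma_n=\sigma_1+(n-1)+\sum_{k=2}^nH(k)=n+\sum_{k=2}^nH(k).
\]
The case $n=1$ is exactly the base case, with the empty-sum convention.

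\textbf{Main obstacle.} The only step that needs care is justifying $\sigma_1=1$. This requires $\delta_1\ge2$, which follows from the rank identity together with $a\ge2$. The rest of the proof is a mechanical telescoping sum.
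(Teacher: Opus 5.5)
Your proof is correct and matches the paper's argument: telescope $\sigma_k-\sigma_{k-1}=1+H(k)$ over $k=2,\dots,n$ and anchor at $\sigma_1=1$. Your two justifications of $\sigma_1=1$ (the rank identity giving $\delta_1=a\sigma_1+b\ge2$, or $s_1=h_1=W_0(1)$) spell out what the paper only states parenthetically before the lemma.
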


\begin{proof}
For $n\ge2$ we have $\sigma_n-\sigma_{n-1}=1+H(n)$. Summing from $2$
to $n$ and using $\sigma_1=1$ gives
\[
  \sigma_n = 1 + \sum_{k=2}^{n}(1+H(k)) = n + \sum_{k=2}^{n}H(k).
\qedhere
\]
\end{proof}

\subsubsection{The case \texorpdfstring{$a=1$}{a=1}: Fibonacci/Sturmian word}

For $W_0=\N$ (equivalently $(a,b)=(1,0)$), Corollary~\ref{cor:wythoff}
gives the survivor sequence as
\[
  s_n=\lfloor (n-1)\varphi\rfloor+2,\qquad \varphi=\frac{1+\sqrt5}{2}.
\]
Define $H(n):=(s_n-s_{n-1})-1$ for $n\ge2$, so that $H$ is the binary
gap word for the sieve on~$\N$.

\begin{proposition}\label{prop:fib-word}
For $W_0=\N$, the binary gap word $H$ is a (characteristic) Sturmian word (see e.g.~\cite{Lothaire2002}).
More precisely, for $n\ge2$,
\begin{equation}\label{eq:H-fib-mech}
  H(n)=\Bigl\lfloor \frac{n-1}{\varphi}\Bigr\rfloor
      -\Bigl\lfloor \frac{n-2}{\varphi}\Bigr\rfloor,
\end{equation}
so $H$ is the mechanical word of slope $1/\varphi$ (up to the index shift
$n\mapsto n-1$). Equivalently, after exchanging the symbols $0$ and $1$,
one recovers the standard Fibonacci word.
\end{proposition}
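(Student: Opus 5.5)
The plan is a direct computation from the Beatty formula of Corollary~\ref{cor:wythoff}, which we take as established. For $n\ge2$ that formula gives
\[
  H(n)=(s_n-s_{n-1})-1=\lfloor (n-1)\varphi\rfloor-\lfloor (n-2)\varphi\rfloor-1 .
\]
The key identity is $\varphi=1+1/\varphi$. For every integer $m\ge0$ it yields
\[
  \lfloor m\varphi\rfloor=\lfloor m+m/\varphi\rfloor=m+\lfloor m/\varphi\rfloor,
\]
since $m$ is an integer. Substituting $m=n-1$ and $m=n-2$, the linear terms contribute $(n-1)-(n-2)=1$, which cancels the $-1$. This leaves exactly \eqref{eq:H-fib-mech}. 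As a sanity check at $n=2$: $s_2-s_1=1$ gives $H(2)=0$, and $\lfloor 1/\varphi\rfloor-\lfloor 0\rfloor=0$.

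Next I identify $H$ as a characteristic Sturmian word. Put $\alpha=1/\varphi\in(0,1)$, which is irrational, and re-index by $m=n-1\ge1$. Then $H(m+1)=\lfloor m\alpha\rfloor-\lfloor (m-1)\alpha\rfloor$ is the lower mechanical word of slope $\alpha$ and intercept $0$. By the standard characterization in~\cite{Lothaire2002}, read from its first index on (the index shift $n\mapsto n-1$), this is the characteristic word $c_\alpha$. Because $\alpha$ is irrational, $c_\alpha$ is Sturmian.

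For the last claim I exchange $0\leftrightarrow1$ and show the result is the characteristic word of slope $1-\alpha=1/\varphi^2$. Write $1-(\lfloor(m+1)\alpha\rfloor-\lfloor m\alpha\rfloor)$ using $\lfloor x\rfloor=-\lceil -x\rceil$. This gives the upper mechanical word $\lceil (m+1)(1-\alpha)\rceil-\lceil m(1-\alpha)\rceil$. For $m\ge1$ the quantities $m(1-\alpha)$ are never integers, so upper and lower mechanical words agree at these indices. Hence the complemented word is $c_{1-\alpha}=c_{1/\varphi^2}$, which is the standard Fibonacci word by the usual description in~\cite{Lothaire2002}.

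The only delicate step is this bookkeeping of the starting index. At $m=0$ the point $m\alpha$ is an integer, so upper and lower mechanical words could differ there. I expect that to be the main place where care is needed, and it is avoided by starting the word at $n=2$, i.e.\ $m\ge1$.
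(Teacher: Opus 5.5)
\textbf{The formula.} Your derivation of \eqref{eq:H-fib-mech} is correct and is exactly the paper's argument. You substitute the Beatty formula, use $\lfloor m\varphi\rfloor=m+\lfloor m/\varphi\rfloor$, and subtract. Your justification (that $m$ is an integer) is the right one. The paper's remark that $(n-1)/\varphi\notin\Z$ is unnecessary, and it fails for the term $n-2=0$ when $n=2$. The paper's proof stops at the formula and treats the Sturmian/Fibonacci identification as immediate. Your last two paragraphs go further, but they contain an off-by-one slip, precisely at the step you flagged as delicate.

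\textbf{The word identification.} Write $\alpha=1/\varphi$ and $s_{\alpha,0}(k)=\lfloor (k+1)\alpha\rfloor-\lfloor k\alpha\rfloor$ for $k\ge0$. Then $H(n)=s_{\alpha,0}(n-2)$ for $n\ge2$. The lower mechanical word of intercept $0$ is $s_{\alpha,0}=0\,c_\alpha$, not $c_\alpha$. Concretely, $H(2)H(3)H(4)\cdots=0101101\cdots$, whereas $c_{1/\varphi}=10110\cdots$ begins with $1$ because $\alpha>1/2$. So the word read from $n=2$ is Sturmian but not characteristic; the characteristic word is the suffix $H(3)H(4)\cdots$.

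\textbf{The complement step.} In your third paragraph the letter $1-(\lfloor (m+1)\alpha\rfloor-\lfloor m\alpha\rfloor)$ equals $1-H(m+2)$. So the exceptional index $m=0$, where upper and lower mechanical words differ, is exactly $n=2$. Restricting to $m\ge1$ therefore means $n\ge3$, not $n\ge2$ as your final sentence claims. Indeed, exchanging letters in $H(2)H(3)\cdots$ gives $1010010\cdots=1\,c_{1/\varphi^2}$, the upper mechanical word of slope $1/\varphi^2$. This is not the Fibonacci word $0100101\cdots$.

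\textbf{Corrected statement.} $H(2)H(3)\cdots=s_{1/\varphi,0}$, which is Sturmian since $1/\varphi$ is irrational. The suffix $H(3)H(4)\cdots$ equals $c_{1/\varphi}$, and exchanging its letters gives $c_{1/\varphi^2}$, the standard Fibonacci word. The characteristic and Fibonacci claims thus hold only after dropping the single letter $H(2)$. This is the honest content of the statement's ``up to the index shift.'' With that one-letter correction your argument is complete.
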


\begin{proof}
Using $\varphi=1+1/\varphi$, we have
\[
  \lfloor (n-1)\varphi\rfloor
  =\Bigl\lfloor (n-1)+\frac{n-1}{\varphi}\Bigr\rfloor
  =(n-1)+\Bigl\lfloor \frac{n-1}{\varphi}\Bigr\rfloor,
\]
since $(n-1)/\varphi\notin\Z$. Therefore
\[
  s_n = \lfloor (n-1)\varphi\rfloor+2
      = n+1 + \Bigl\lfloor \frac{n-1}{\varphi}\Bigr\rfloor.
\]
Subtracting the same identity at $n-1$ yields
\[
  s_n-s_{n-1}
  = 1 + \Bigl(\Bigl\lfloor \frac{n-1}{\varphi}\Bigr\rfloor
               -\Bigl\lfloor \frac{n-2}{\varphi}\Bigr\rfloor\Bigr),
\]
which is exactly \eqref{eq:H-fib-mech}.
\end{proof}

\subsubsection{General \texorpdfstring{$a\ge2$}{a>=2}: density and examples}

Although $H$ is always binary, its combinatorial properties depend on
$(a,b)$. What is universal is the asymptotic frequency of $1$'s, which is
dictated by the characteristic slope $\alpha(a)$ of Theorem~\ref{thm:slope}.

\begin{proposition}\label{thm:density-H}
Assume $a\ge2$, and let $\alpha(a)\in(1,2)$ be the limit
$\alpha(a)=\lim_{n\to\infty}\sigma_n/n$ from Theorem~\ref{thm:slope}.
Then the frequency of $1$'s in $H$ exists and equals
\begin{equation}\label{eq:rho-a}
  \rho(a):=\lim_{n\to\infty}\frac1{n}\sum_{k=2}^{n}H(k)=\alpha(a)-1
  =\frac{\sqrt{a^2+4a}-a}{2a}.
\end{equation}
\end{proposition}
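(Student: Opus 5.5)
The plan is to read the frequency of $1$'s in $H$ off Lemma~\ref{lem:reconstruct-sigma} and then to identify the limit slope $\alpha(a)$ from the rank identity and complementarity.

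\textbf{Step 1 (frequency from the slope).} By \eqref{eq:sigma-from-H}, $\sum_{k=2}^n H(k)=\sigma_n-n$. Dividing by $n$ and invoking Theorem~\ref{thm:slope}, which asserts that $\sigma_n/n\to\alpha(a)$, gives
\[
  \frac1n\sum_{k=2}^n H(k)\;\longrightarrow\;\alpha(a)-1 .
\]
This settles existence of the frequency and the identity $\rho(a)=\alpha(a)-1$, and it also gives $\rho(a)\in(0,1)$ since $\alpha(a)\in(1,2)$.

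\textbf{Step 2 (closed form).} If Theorem~\ref{thm:slope} already supplies $\alpha(a)$ as the root in $(1,2)$ of $a\alpha^2-a\alpha-1=0$, the value is immediate. In case it does not, I would derive it as follows.

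From Proposition~\ref{thm:rank-ab}, $\delta_n/n\to a\alpha+1=:\beta$. The sets $\{\sigma_n\}$ and $\{\delta_n\}$ partition $\N$ (Definition~\ref{def:normalized}), and both have positive limiting densities $1/\alpha$ and $1/\beta$. Counting elements up to $N$ and using monotonicity gives Beatty's relation
\[
  \frac1\alpha+\frac1\beta=1 .
\]
Substituting $\beta=a\alpha+1$ and clearing denominators,
\[
  a\alpha+1+\alpha=\alpha(a\alpha+1),
  \qquad\text{that is,}\qquad
  a\alpha^2-a\alpha-1=0 .
\]
Taking the positive root gives $\alpha=\dfrac{a+\sqrt{a^2+4a}}{2a}$, hence
\[
  \rho(a)=\alpha(a)-1=\frac{\sqrt{a^2+4a}-a}{2a}.
\]

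\textbf{Main obstacle.} The only substantive point is existence of the limit $\alpha(a)$, which is hypothesized here via Theorem~\ref{thm:slope}; everything else is algebra. If that theorem were unavailable, I would prove existence as follows. Define the counting function $S(N)=\#\{n:\sigma_n\le N\}$. Complementarity gives $S(N)+D(N)=N$, and the rank identity gives $D(N)\approx S\bigl((N-N/\alpha)/a\bigr)$. Together these yield a self-similar functional relation for $S$. Convergence of $S(N)/N$ would then follow by bounding the bounded discrepancy through an $\limsup/\liminf$ argument: both the $\limsup$ and the $\liminf$ satisfy the same quadratic, and its unique root in $(1,2)$ forces them to coincide.
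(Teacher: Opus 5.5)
Your proof is correct and is essentially the paper's: telescope via Lemma~\ref{lem:reconstruct-sigma} to get $\sum_{k=2}^{n}H(k)=\sigma_n-n$, divide by $n$, pass to the limit using Theorem~\ref{thm:slope}, and substitute the closed form of $\alpha(a)$ given there. Your Step~2 fallback ($1/\alpha+1/\beta=1$ with $\beta=a\alpha+1$) is a correct alternative derivation of the quadratic, but neither it nor the existence sketch in your final paragraph is needed, since Theorem~\ref{thm:slope} already provides both the existence of the limit and its value.
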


\begin{proof}
By Lemma~\ref{lem:reconstruct-sigma},
$\sum_{k=2}^{n}H(k)=\sigma_n-n$. Dividing by $n$ and letting $n\to\infty$
gives
\[
  \lim_{n\to\infty}\frac1n\sum_{k=2}^{n}H(k)
  =\lim_{n\to\infty}\Bigl(\frac{\sigma_n}{n}-1\Bigr)=\alpha(a)-1.
\]
Finally, substituting $\alpha(a)=\frac{a+\sqrt{a^2+4a}}{2a}$
(Theorem~\ref{thm:slope}) yields \eqref{eq:rho-a}.
\end{proof}

\begin{remark}
For $a=1$, Proposition~\ref{prop:fib-word} shows that $H$ is Sturmian.
For $a\ge2$, computations indicate that $H$ is typically \emph{not}
Sturmian. For instance, for $(a,b)=(2,0)$ one finds five distinct factors
of length~$3$ (hence factor complexity $p(3)=5>4$), which is impossible
for a Sturmian word.
\end{remark}

\begin{table}[t]
\centering
\begin{tabular}{@{}llp{0.78\textwidth}@{}}
\toprule
$(a,b)$ & word & prefix of $H(2),H(3),\ldots$ \\ \midrule
$(1,0)$ & (Sturmian) & $0101101011011010110101101101011011010110\ldots$ \\
$(2,0)$ & (binary)   & $1000101010100010100010100010100010101010\ldots$ \\
$(2,1)$ & (binary)   & $0101000101000101010100010100010101010001\ldots$ \\
$(3,0)$ & (binary)   & $0100100000100100100000100100100100100100\ldots$ \\
\bottomrule
\end{tabular}
\caption{Prefixes of the binary gap word $H$ for small parameters.}
\label{tab:gapword-prefix}
\end{table}

\begin{openproblem}\label{op:gapword}
For the golden sieve on $W_0=a\N+b$ with $a\ge2$, determine the
combinatorial class of the gap word~$H$.  Is it substitutive (fixed
point of a primitive morphism)?  Is it $k$-automatic for some~$k$, or
Ostrowski-automatic with respect to the continued fraction
of~$\alpha(a)$ (see~\cite{BSS2021})?  In particular, determine its factor complexity
$p_H(n)$ and its balance properties as functions of $(a,b)$.  (For
$a=1$, the gap word is the Fibonacci word, which is
Fibonacci-automatic but not $k$-automatic.
The tool \textsc{Walnut}~\cite{Shallit2022walnut}, which handles both
$k$-automatic and Ostrowski-automatic sequences, may be well suited
to some of these questions.)
\end{openproblem}

\subsection{Hiccup rule for \texorpdfstring{$a\N+b$}{aN+b}}\label{sec:hiccup-ab}

The same binary word $H$ controls both gap sequences.
For the survivors it gives a hiccup law.
For the deletions it gives a two-gap rule in which the membership test
is taken inside~$W_0$.

\begin{theorem}\label{thm:hiccup-ab}
Let $a\ge2$ and $W_0=a\N+b$.
The survivor sequence is a hiccup sequence in the sense of
Definition~\ref{def:hiccup}, and the deletion sequence satisfies a
filtered self-referential two-gap law.
\begin{enumerate}
\item[\textup{(i)}] \textbf{Survivors.}
The survivor sequence $(s_n)$ is a $(0,\,s_1,\,2a,\,a)$-hiccup
sequence in the sense of Definition~\textup{\ref{def:hiccup}}.
Explicitly, for every $n\ge2$,
\begin{equation}\label{eq:hiccup-survivor-rule}
  s_n - s_{n-1} =
  \begin{cases}
    2a & \text{if } n\in\{s_1,s_2,\ldots\},\\
    a  & \text{if } n\notin\{s_1,s_2,\ldots\}.
  \end{cases}
\end{equation}

\item[\textup{(ii)}] \textbf{Deletions.}
The deletion gaps take exactly two values $\{a(a\!+\!1),\;a(2a\!+\!1)\}$,
controlled by a self-referential test filtered through~$W_0$.
For every $n\ge2$,
\begin{equation}\label{eq:hiccup-deletion-rule}
  d_n - d_{n-1} =
  \begin{cases}
    a(2a+1) & \text{if } n\in W_0\setminus\{d_1,d_2,\ldots\},\\
    a(a+1)  & \text{otherwise.}
  \end{cases}
\end{equation}
Since $W_0 = \{s_k\}\sqcup\{d_k\}$, the test
``$n\in W_0\setminus\{d_k\}$'' is equivalent to
``$n\in\{s_1,s_2,\ldots\}$''.  When $a=1$ and $W_0=\N$, the
filter vanishes and \eqref{eq:hiccup-deletion-rule} reduces to the
standard hiccup rule $d_n-d_{n-1}=2$ if $n\in \{d_k\}$, $=3$ if
$n\notin \{d_k\}$ \textup{(}Theorem~\textup{\ref{thm:hiccup})}.
\end{enumerate}
Both gap sequences are encoded by a single binary word $H(n)\in\{0,1\}$
defined by
\[
  H(n) := \mathbf{1}_{n\in\{s_1,s_2,\ldots\}},
\]
so that $s_n-s_{n-1}=a(1+H(n))$ and
$d_n-d_{n-1}=a\bigl(a(1+H(n))+1\bigr)$.
\end{theorem}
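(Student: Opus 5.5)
The plan is to reduce everything to the index model of \S\ref{sec:twogap} and to show that the gap word of Definition~\ref{def:gapword-H} equals the membership indicator in the statement:
\[
H(n)=(\sigma_n-\sigma_{n-1})-1 \;=\; \mathbf{1}_{n\in\{s_1,s_2,\ldots\}}\qquad(n\ge2).
\]
Once this identity holds, both parts follow. For (i), $s_n-s_{n-1}=a(\sigma_n-\sigma_{n-1})=a(1+H(n))$, which is $2a$ or $a$ according to whether $n\in\{s_k\}$. For (ii), Proposition~\ref{prop:del-gaps-H} gives $d_n-d_{n-1}=a(a+1)+a^2H(n)=a\bigl(a(1+H(n))+1\bigr)$. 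The reformulation of the test in (ii) is immediate from $W_0=\{s_k\}\sqcup\{d_k\}$, since an integer $n$ lies in $W_0\setminus\{d_k\}$ exactly when it is a survivor. The remark about $a=1$ is just a pointer back to Theorem~\ref{thm:hiccup}, so I would not reprove it here.

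The key step is to locate each deleted index among the survivors by a counting argument, in the same spirit as the proof of Proposition~\ref{thm:rank-ab}. The sequences $(\sigma_n)$ and $(\delta_k)$ are complementary in $\N$. By the proof of Proposition~\ref{thm:two-gap}, no two $\delta$'s are consecutive, and by Lemma~\ref{lem:reconstruct-sigma} we have $\sigma_1=1$. Hence every $\delta_k$ lies strictly between two consecutive survivors, and $H(n)=1$ exactly when the open interval $(\sigma_{n-1},\sigma_n)$ contains a (necessarily unique) deleted index.

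Now fix $k$. Since $(\delta_j)$ is increasing (Lemma~\ref{lem:mono-deletions}), exactly $k$ integers in $\{1,\dots,\delta_k\}$ are deleted. Therefore exactly $\delta_k-k$ survivors are smaller than $\delta_k$, and $\delta_k$ falls in the gap just before $\sigma_n$ with $n=\delta_k-k+1$. Substituting the rank identity \eqref{eq:rank-ab} gives
\[
n=\delta_k-k+1=a\sigma_k+b=s_k .
\]
So the map $k\mapsto s_k$ lists exactly the indices $n\ge2$ with $H(n)=1$. Note that $s_k\ge a+b\ge2$, so these indices do lie in the range $n\ge2$. Conversely, every $n$ with $H(n)=1$ arises this way from the unique $\delta_k$ in its gap. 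This proves the identity $H(n)=\mathbf{1}_{n\in\{s_k\}}$.

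The main obstacle is this counting step, namely getting the off-by-one right when identifying the gap index with $\delta_k-k+1$. I would sanity-check it against the case $(a,b)=(2,0)$. There $\sigma=1,3,4,5,6,8,9,11,\ldots$ and $s=2,6,8,10,12,16,\ldots$, and $H(2)=H(6)=H(8)=1$ with the other early values $0$. This matches Table~\ref{tab:gapword-prefix} and the claimed membership rule. Everything else in the argument is bookkeeping using Propositions~\ref{thm:rank-ab} and~\ref{prop:del-gaps-H}.
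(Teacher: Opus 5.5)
Your proof is correct and follows essentially the paper's route: both arguments use the non-consecutiveness of deleted indices to see that a gap of $2$ in $(\sigma_n)$ contains exactly one $\delta_k$, count the survivors and deletions below that point, and then apply the rank identity $\delta_k=a\sigma_k+k+(b-1)$ to identify the gap index with $s_k$. The only difference is direction. The paper starts from a gap and solves for the index $j$ of the missing $\delta_j$, whereas you start from $\delta_k$ and compute its gap index $\delta_k-k+1$; this makes the converse (every $s_k$ is a gap-$2$ index) explicit, where the paper leaves it implicit.
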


\begin{proof}
By the two-gap property (Proposition~\ref{thm:two-gap}), the normalized
survivor gaps $\sigma_n-\sigma_{n-1}$ lie in~$\{1,2\}$.  A gap of~$2$
means that exactly one integer in~$(\sigma_{n-1},\sigma_n)$ is missing
from the survivor set, hence belongs to the deletion set~$\{\delta_k\}$.
Since $s_n=a\sigma_n+b$, a normalized gap of~$2$ translates to
$s_n-s_{n-1}=2a$, and the missing integer is $\sigma_{n-1}+1$, which
corresponds to the value $s_{n-1}+a$ in the original scale.

We claim that $s_n-s_{n-1}=2a$ if and only if $n\in\{s_1,s_2,\ldots\}$.
Since $\sigma_n-\sigma_{n-1}\in\{1,2\}$, a gap of~$2$ occurs exactly when
$\sigma_{n-1}+1\in\{\delta_k\}$.  In that case, write
$\sigma_{n-1}+1=\delta_j$.  The deletion counting function
$D(\sigma_{n-1}+1)$ counts $j$ deletions among $\{1,\ldots,\sigma_{n-1}+1\}$,
and $S(\sigma_{n-1})=n-1$ (since $\sigma_{n-1}$ is the $(n\!-\!1)$-th survivor).
Hence $j=(\sigma_{n-1}+1)-(n-1)=\sigma_{n-1}-n+2$.
Now the rank identity (Proposition~\ref{thm:rank-ab}) gives
$\delta_j=a\sigma_j+j+(b-1)$, so
\[
  \sigma_{n-1}+1 = a\sigma_j + (\sigma_{n-1}-n+2) + (b-1),
\]
which simplifies to $n = a\sigma_j + b = s_j$.  Thus
$\sigma_n-\sigma_{n-1}=2$ if and only if $n=s_j$ for some~$j$, i.e.\
$n\in\{s_1,s_2,\ldots\}$.
Since $s_n=a\sigma_n+b$, the gap in the original scale is
$s_n-s_{n-1}=2a$, giving~\eqref{eq:hiccup-survivor-rule}.

For the deletion gaps, $d_n = a\delta_n+b$ and
$\delta_n = a\sigma_n + n + (b-1)$ (Proposition~\ref{thm:rank-ab}),
whence
$d_n - d_{n-1} = a\bigl(a(\sigma_n-\sigma_{n-1})+1\bigr)
               = a\bigl(a(1+H(n))+1\bigr)$,
yielding~\eqref{eq:hiccup-deletion-rule}.
Since $W_0 = \{s_k\}\sqcup\{d_k\}$, the condition
$n\in W_0\setminus\{d_k\}$ is the same as $n\in\{s_k\}$,
i.e.\ $H(n)=1$.
\end{proof}

\begin{remark}\label{rem:deletions-not-hiccup}
For $a=1$ and $W_0=\N$, the membership test in
\eqref{eq:hiccup-deletion-rule} reduces to $n\in\{d_k\}$, which is the
standard self-referential test of Definition~\ref{def:hiccup}; the deletion
sequence is then itself a hiccup sequence.
For $a\ge2$, however, the test is performed in $W_0\setminus\{d_k\}$, not in
$\{d_k\}$ itself, so Theorem~\ref{thm:hiccup-ab} does \emph{not} assert that
$(d_n)$ is a hiccup sequence in the sense of
Definition~\ref{def:hiccup}.
Rather, the deletion gaps are governed by the same binary word $H$ as the
survivor gaps, but through a filtered ambient test rather than through
membership in the value set of $(d_n)$ itself.
\end{remark}

\begin{corollary}
\label{cor:hiccup-N}
The proof of Theorem~\ref{thm:hiccup-ab} extends, with the obvious
degeneracy at the first step, to the case $(a,b)=(1,0)$.
In this case the normalized model coincides with the original
($\sigma_n=s_n$, $\delta_n=d_n$), the survivor rule
\eqref{eq:hiccup-N-surv} holds for $n\ge3$, and the deletion sequence
satisfies the standard hiccup rule~\eqref{eq:hiccup-N}.
\end{corollary}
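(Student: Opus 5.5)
The plan is to redo the argument of Theorem~\ref{thm:hiccup-ab} for $(a,b)=(1,0)$, using Lemma~\ref{lem:rank-N} in place of Proposition~\ref{thm:rank-ab}. For $n\ge2$ that lemma gives $d_n=s_n+n-1$, the case $a=1$, $b=0$ of the rank identity, but only from $n=2$ on; the step $n=1$ is degenerate ($d_1=1$, $s_1=2$). The first task is to re-establish the missing structural facts. From the proof of Lemma~\ref{lem:rank-N}, $W_{n-1}(n)=s_n$ for $n\ge2$, so $h_n=s_n$ is strictly increasing for $n\ge2$. The argument of Lemma~\ref{lem:mono-deletions} then shows $(d_n)_{n\ge2}$ is strictly increasing, and $d_2=4>d_1$. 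Differencing the rank identity gives $d_n-d_{n-1}=(s_n-s_{n-1})+1$ for $n\ge3$. Since this is $\ge2$, no two deletions from $d_2$ onward are consecutive. Also $1=d_1$ is followed by the survivors $2,3$. Hence survivor gaps lie in $\{1,2\}$, exactly as in Proposition~\ref{thm:two-gap}.

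Next comes the survivor rule for $n\ge3$, following the membership computation in the proof of Theorem~\ref{thm:hiccup-ab}. A gap $s_n-s_{n-1}=2$ occurs exactly when $s_{n-1}+1=d_j$ for some $j$. Counting, there are $n-1$ survivors up to $s_{n-1}$, so $j=(s_{n-1}+1)-(n-1)=s_{n-1}-n+2$. Then $s_{n-1}\ge s_2=3$ and $s_{n-1}\ge n$ ensure $j\ge2$, so the identity $d_j=s_j+j-1$ is legitimate here. Substituting gives $s_{n-1}+1=s_j+s_{n-1}-n+1$, that is $n=s_j$. Conversely, if $n=s_j$ with $j\ge2$, reversing the steps shows $s_{n-1}+1=d_{j'}$ with $j'=s_{n-1}-n+2$; this must be checked by inverting the counting argument. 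The case $n=s_1=2$ is exactly the excluded initial gap. This proves \eqref{eq:hiccup-N-surv} for $n\ge3$.

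For the deletion rule, the same differencing gives $d_n-d_{n-1}=2+H(n)$ for $n\ge3$, where $H(n)=\mathbf 1_{n\in\{s_k\}}$. Since $\N=\{s_k\}\sqcup\{d_k\}$, this equals $3$ iff $n\notin\{d_k\}$ and $2$ iff $n\in\{d_k\}$, with no filtering. This is \eqref{eq:hiccup-N}, once $n=2$ is checked directly: $d_2-d_1=3$ and $2\notin\{d_k\}$.

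The main obstacle is bookkeeping at the degenerate first step. The rank identity fails at $n=1$ (it would give $d_1=s_1=2$), so every invocation must be checked to use only indices $\ge2$. In particular, the converse direction of the survivor rule and the small cases $n=2,3$ need care, and I would verify those against Example~\ref{ex:sieve-N}.
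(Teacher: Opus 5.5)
Your argument is correct, and it takes a different route from the paper. The paper does not redo the membership computation for $a=1$. After differencing Lemma~\ref{lem:rank-N}, it takes the two-gap property and the criterion ``$s_n-s_{n-1}=2$ iff $n\in\{s_k\}$'' from two sources: the Wythoff identification (Corollary~\ref{cor:wythoff}, itself proved via Lemma~\ref{lem:mex-uniqueness} and Rayleigh--Beatty), and the known gap structure of the lower Wythoff sequence (Fibonacci word, cf.\ Example~\ref{ex:hiccup-simple}). It then reads off the deletion rule from the partition.

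You instead run the counting argument of Theorem~\ref{thm:hiccup-ab} directly with the shifted rank identity. You prove monotonicity of $(d_n)$ and the two-gap property from scratch, and you restrict each use of $d_j=s_j+j-1$ to $j\ge2$. This buys three things:
\begin{itemize}
\item it is self-contained and needs no Beatty theory;
\item it could even be run in reverse to recover Corollary~\ref{cor:wythoff};
\item it handles the degeneracy more carefully: the paper differences the rank identity without excluding $n=2$, where $d_2-d_1=3\ne(s_2-s_1)+1$.
\end{itemize}
The paper's route is shorter but leans on external facts about Wythoff sequences.

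The converse you flag is not a literal reversal of the algebra, since your $j'$ presupposes that $s_{n-1}+1$ is a deletion. It does close using facts you already have. For $j\ge2$ we have $d_j\ge4$, and deletions are non-consecutive, so $d_j-1$ is a survivor. Exactly $j-1$ deletions lie below it, hence $d_j-1=s_{d_j-j}=s_{s_j-1}$ by the rank identity. Taking $n=s_j$ gives $s_{n-1}+1=d_j$, so $s_n-s_{n-1}=2$. With this line added, both rules follow as you describe.
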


\begin{proof}
For $(a,b)=(1,0)$, the normalized model coincides with the original one:
\[
  \sigma_n=s_n,\qquad \delta_n=d_n.
\]
By Lemma~\ref{lem:rank-N} (which holds for $n\ge2$),
\[
  \delta_n=\sigma_n+n-1,
\]
hence
\[
  \delta_n-\delta_{n-1}=(\sigma_n-\sigma_{n-1})+1\in\{2,3\},
\]
since the survivor gaps satisfy $\sigma_n-\sigma_{n-1}\in\{1,2\}$.

We now determine when the larger gap $\delta_n-\delta_{n-1}=3$ occurs.
This is equivalent to $\sigma_n-\sigma_{n-1}=2$.  Since $\{s_k\}$ and $\{d_k\}$
partition~$\N$ (Lemma~\ref{lem:well-defined}), the complement of $\{d_k\}$ in $\N$
is exactly $\{s_k\}$.  The survivor sequence $(s_n)$ is the lower Wythoff sequence
(Corollary~\ref{cor:wythoff}), whose gap word is the Fibonacci word on $\{1,2\}$;
the gap $\sigma_n-\sigma_{n-1}=2$ occurs precisely when $n\in\{s_k\}$.
Therefore
\[
  d_n-d_{n-1}=3 \iff n\in\{s_k\},
  \qquad
  d_n-d_{n-1}=2 \iff n\in\{d_k\},
\]
which is exactly the standard hiccup rule~\eqref{eq:hiccup-N}.
The survivor rule~\eqref{eq:hiccup-N-surv} follows by the same reasoning:
$\sigma_n-\sigma_{n-1}=2$ iff $n\in\{s_k\}$, and $\sigma_n-\sigma_{n-1}=1$
iff $n\in\{d_k\}$, for $n\ge3$.
The exception at $n=2$ is due to the initial degeneracy $d_1=1=h_1$.
\end{proof}

\section{Fraenkel connection, games, and rank transforms}\label{sec:fraenkel}

The rank identity proved in Section~\ref{sec:general} places the golden sieve
for arithmetic progressions into the classical framework of
\emph{complementary equations} of Fraenkel
and Kimberling.

\subsection{Fraenkel-type complementary equation}

Recall that $(\sigma_n)$ and $(\delta_n)$ are the normalized survivor and
deletion sequences (Definition~\ref{def:normalized}), so that
$\{\sigma_n\}\sqcup\{\delta_n\}=\N$.
The general rank identity (Proposition~\ref{thm:rank-ab}) can be rewritten as the
following complementary equation, which will be the engine behind
the asymptotic and word-theoretic results.

\begin{corollary}\label{cor:fraenkel-oce}
Assume $a\ge2$ and $0\le b<a$. Then for every $n\ge1$,
\begin{equation}\label{eq:fraenkel-oce}
  \delta_n = a\,\sigma_n + n + (b-1).
\end{equation}
In particular, in the nondegenerate residue class $b=1$ we have the pure
Fraenkel form $\delta_n=a\sigma_n+n$.
\end{corollary}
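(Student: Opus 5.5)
This corollary is a restatement of Proposition~\ref{thm:rank-ab}, specialised to $a\ge2$. The plan is therefore short: check that the hypotheses of Proposition~\ref{thm:rank-ab} hold, quote its conclusion, and then set $b=1$.

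\textbf{Checking the hypothesis.} Proposition~\ref{thm:rank-ab} requires only $(a,b)\neq(1,0)$. Since $a\ge2$, this holds for every $0\le b<a$. As a sanity check on the ingredients that proposition uses:
\begin{itemize}
\item Proposition~\ref{thm:pointer-survivor} gives $h_n=s_n=a\sigma_n+b$.
\item Lemma~\ref{lem:mono-deletions} gives that $\delta_1<\cdots<\delta_{n-1}$ are increasing.
\item Hence exactly $n-1$ indices below $\delta_n$ have been removed.
\item So the rank of $\delta_n$ in $I_{n-1}$ is $\delta_n-(n-1)$, and this equals $h_n$.
\end{itemize}
Equating the two expressions for the rank and rearranging gives \eqref{eq:fraenkel-oce} for every $n\ge1$.

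\textbf{The case $b=1$.} Here $b-1=0$, so the identity becomes $\delta_n=a\sigma_n+n$. This is the Fraenkel form.

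\textbf{Remaining point.} The only place needing care is the implicit complementarity, $\{\sigma_n\}\sqcup\{\delta_n\}=\N$ with both sequences increasing. This is what lets the equation be read as a complementary equation in Fraenkel's sense. It follows from Definition~\ref{def:normalized} together with Lemma~\ref{lem:mono-deletions}, so I do not expect any genuine obstacle.
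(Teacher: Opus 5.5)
Your proposal is correct and matches the paper, which obtains this corollary simply by restating Proposition~\ref{thm:rank-ab} (its hypothesis $(a,b)\neq(1,0)$ holds automatically once $a\ge2$) and then setting $b=1$. Your final remark on complementarity is not needed for the identity itself, but it is harmless and correctly sourced.
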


\subsection{Game-theoretic interpretation}

Wythoff's game~\cite{Wythoff1907} is a two-player combinatorial game played
with two heaps of tokens. A position $(x,y)$ is a $\mathcal{P}$-position
(previous player wins) if and only if $\{x,y\}=\{A_n,B_n\}$ for some
$n\ge0$, where $A_n=\lfloor n\varphi\rfloor$ and
$B_n=\lfloor n\varphi^2\rfloor$ are the lower and upper Wythoff
sequences.  These satisfy $B_n=A_n+n$ and
$\{A_n\}\sqcup\{B_n\}=\N_0$.

For $W_0=\N$ (i.e.\ $(a,b)=(1,0)$), the golden sieve produces the
partition $\{\sigma_n\}\sqcup\{\delta_n\}=\N$.
By Corollary~\ref{cor:wythoff}, after the shift
\[
  A_m=\sigma_{m+1}-1, \qquad B_m=\delta_{m+1}-1 \qquad (m\ge1),
\]
one recovers the classical Wythoff pair, satisfying $B_m=A_m+m$.
The golden sieve thus gives a dynamical construction of the Wythoff
$\mathcal{P}$-positions through its self-referential deletion mechanism.

Fraenkel~\cite{Fraenkel1969,Fraenkel1998} generalized Wythoff's game to
a family of two-player combinatorial games $\mathrm{NIM}(a,1)$, indexed by $a\ge1$, whose
$\mathcal{P}$-positions satisfy $B_n = a A_n + n$ and form complementary
Beatty sequences when $a=1$, but admit no Beatty representation for
$a\ge2$.

\begin{lemma}\label{lem:mex-uniqueness}
Fix integers $a\ge1$ and $c\ge0$.  There is at most one pair of increasing
complementary sequences $(u_n)_{n\ge1}$, $(v_n)_{n\ge1}$ of positive integers
satisfying
\[
  v_n = a\,u_n + n + c \qquad (n\ge1).
\]
More precisely, the pair is recursively determined by $u_1=1$,
$v_1=a+1+c$, and for $n\ge2$,
\[
  u_n = \min\bigl(\N\setminus\{u_1,\dots,u_{n-1},v_1,\dots,v_{n-1}\}\bigr),
  \qquad
  v_n = a\,u_n+n+c.
\]
\end{lemma}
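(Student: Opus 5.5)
We prove the lemma by showing that any such complementary pair is forced to satisfy the stated mex recursion; uniqueness then follows by induction on $n$. So let $(u_n)$, $(v_n)$ be increasing, complementary in $\N$, with $v_n=au_n+n+c$. The first step is to record the elementary inequality
\[
  v_n = a u_n + n + c > u_n \qquad (n\ge1),
\]
valid because $a\ge1$, $n\ge1$ and $c\ge0$. We also need $u_n\ge n$, which holds since $(u_n)$ is increasing in $\N$. Together with the monotonicity of $(v_n)$, these give $v_m>v_{m'}$ and $v_m\ge v_1> u_1$ for $m>m'$.

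\textbf{Base case.} We show $u_1=1$. The integer $1$ lies in $\{u_k\}\sqcup\{v_k\}$. If $1=v_m$ for some $m$, then $v_m>u_m\ge1$, which is impossible. Hence $1=u_k$ for some $k$, and monotonicity forces $k=1$. Then $v_1=a+1+c$ is fixed by the relation.

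\textbf{Inductive step.} Assume $u_1,\dots,u_{n-1}$ and $v_1,\dots,v_{n-1}$ are determined. Put
\[
  m:=\min\bigl(\N\setminus\{u_1,\dots,u_{n-1},v_1,\dots,v_{n-1}\}\bigr),
\]
which exists because the excluded set is finite. We claim $u_n=m$, and argue in two steps.

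First, $m$ is not some $v_k$ with $k\ge n$. For such $k$ we have $v_k>u_k\ge u_n$. The integer $u_n$ is not among $u_1,\dots,u_{n-1}$ (strict monotonicity), and it is not any $v_j$ (complementarity). So $u_n$ lies in the complement set, giving $m\le u_n<v_k$. Since $m$ is also not among $v_1,\dots,v_{n-1}$ by definition, $m$ is not a $v$-value at all, and complementarity gives $m=u_k$ for some $k\ge n$.

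Second, the same observation gives $m\le u_n$, while $m=u_k$ with $k\ge n$ gives $m\ge u_n$. Hence $m=u_n$, and then $v_n=au_n+n+c$ is determined.

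By induction, both sequences coincide with the recursively defined pair, so at most one such pair exists. The only delicate point is excluding $m=v_k$ with $k\ge n$, and this is handled by the chain $v_k>u_k\ge u_n\ge m$. No existence claim is needed, because the lemma asserts only uniqueness.
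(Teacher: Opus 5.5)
Your proof is correct and follows the paper's argument: both use $v_n>u_n$ to show that the smallest positive integer not among $u_1,\dots,u_{n-1},v_1,\dots,v_{n-1}$ must be $u_n$, and uniqueness then follows by induction. You spell out more carefully than the paper why this minimum cannot be some $v_k$ with $k\ge n$ (via $v_k>u_k\ge u_n\ge m$), and you check the base case $u_1=1$ explicitly, but the route is the same.
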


\begin{proof}
Since $a\ge1$ and $c\ge0$, we have $v_n - u_n = (a-1)u_n+n+c \ge n \ge 1$,
so $v_n > u_n$ for every $n$.  Hence the smallest positive integer not yet
appearing among $u_1,\dots,u_{n-1},v_1,\dots,v_{n-1}$ must be $u_n$ (since
$v_n$ lies strictly above it), and $v_n$ is then forced by the affine relation.
Uniqueness follows by induction.
\end{proof}

\begin{proof}[Proof of Corollary~\ref{cor:wythoff}]
For $(a,b)=(1,0)$, the rank identity~\eqref{eq:rank-N} holds for $n\ge2$
(Lemma~\ref{lem:rank-N}), giving $\delta_n = \sigma_n + n - 1$,
i.e.\ $d_n = s_n + n - 1$ for $n\ge2$.
Set $A_m := s_{m+1}-2$ and $B_m := d_{m+1}-2$ for $m\ge1$.  Then
\[
  B_m = d_{m+1}-2 = s_{m+1}+m-2 = A_m+m,
\]
and $(A_m),(B_m)$ form a complementary partition of~$\N$.
By Lemma~\ref{lem:mex-uniqueness} with $(a,c)=(1,0)$, there is at most one
such pair satisfying $B_m=A_m+m$.
The classical Wythoff pair $(\lfloor m\varphi\rfloor,\lfloor m\varphi^2\rfloor)$
is complementary and satisfies $\lfloor m\varphi^2\rfloor=\lfloor m\varphi\rfloor+m$
by the Rayleigh--Beatty theorem~\cite{Beatty1926,Rayleigh1894}.
Hence $A_m=\lfloor m\varphi\rfloor$ and $B_m=\lfloor m\varphi^2\rfloor$,
giving~\eqref{eq:surv-beatty}--\eqref{eq:del-beatty} after shifting indices.
\end{proof}

\begin{proposition}%
\label{prop:P-positions}
For every $a\ge1$, the golden sieve on $W_0=a\N+1$ produces, in
normalized indices, the $\mathcal{P}$-positions of $\mathrm{NIM}(a,1)$.
More precisely, $A_n=\sigma_n$ and $B_n=\delta_n$ for $n\ge1$, where
$(A_n,B_n)$ are the Fraenkel $\mathcal{P}$-position pairs and
$(\sigma_n,\delta_n)$ are the normalized survivor/deletion sequences.
\end{proposition}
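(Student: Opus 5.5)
Since $b=1$, Corollary~\ref{cor:fraenkel-oce} (for $a\ge2$) gives the pure Fraenkel form $\delta_n=a\sigma_n+n$ for all $n\ge1$. The Fraenkel $\mathcal{P}$-positions of $\mathrm{NIM}(a,1)$ are characterized in~\cite{Fraenkel1969,Fraenkel1998} by the same relation $B_n=aA_n+n$, with $(A_n)$ and $(B_n)$ increasing and complementary. Two such pairs coincide by Lemma~\ref{lem:mex-uniqueness} with $c=0$. So the plan is to verify the hypotheses of that lemma for both pairs and conclude $A_n=\sigma_n$, $B_n=\delta_n$.

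\textbf{The sieve pair.} For $a\ge2$, the sequences $(\sigma_n)$ and $(\delta_n)$ are strictly increasing: $\sigma$ because it lists survivors in increasing order, and $\delta$ by Lemma~\ref{lem:mono-deletions}. They are complementary in $\N$ by Definition~\ref{def:normalized}. The relation $\delta_n=a\sigma_n+n$ is Corollary~\ref{cor:fraenkel-oce}. Also $\sigma_1=1$, consistent with the mex recursion, since $\delta_1=a\sigma_1+1\ge3$.

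\textbf{The Fraenkel pair.} I will recall from Fraenkel's papers that the $\mathcal{P}$-positions $(A_n,B_n)$, $n\ge1$, are given by the mex recursion $A_n=\operatorname{mex}\{A_i,B_i: i<n\}$, $B_n=aA_n+n$. These sequences are increasing, complementary in $\N$ (the position $(0,0)$ being excluded), and satisfy the same affine relation. Lemma~\ref{lem:mex-uniqueness} with $(a,c)=(a,0)$ then forces equality with the sieve pair.

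\textbf{The case $a=1$.} Here $W_0=\N+1=\{2,3,\dots\}$ and $(a,b)=(1,1)\neq(1,0)$. Proposition~\ref{thm:rank-ab} still applies and gives $\delta_n=\sigma_n+n$. Monotonicity of $\delta$ still holds by Lemma~\ref{lem:mono-deletions}, and complementarity holds by normalization. So the same uniqueness argument applies, and we recover the Wythoff pair (up to the indexing $n\ge1$ versus $n\ge0$). Corollary~\ref{cor:fraenkel-oce} is stated only for $a\ge2$, so this case must go through Proposition~\ref{thm:rank-ab} directly.

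\textbf{Main obstacle.} The only delicate point is matching conventions. Fraenkel's indexing starts at $n=0$ with $(A_0,B_0)=(0,0)$ and works in $\N_0$, whereas the sieve works in $\N$ with $n\ge1$. I will check that dropping the $(0,0)$ position leaves exactly a complementary partition of $\N$ with $B_n=aA_n+n$ for $n\ge1$, and that the mex taken over $\N_0$ and over $\N$ agree once $0$ is removed. Once the two normalizations are aligned, uniqueness does the rest.
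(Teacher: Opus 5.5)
Your proposal is correct and follows the paper's proof: specialize the rank identity to $b=1$ to get $\delta_n=a\sigma_n+n$, note that the sieve pair is increasing and complementary, and apply Lemma~\ref{lem:mex-uniqueness} with $c=0$ to identify it with Fraenkel's pair. You also handle two points the paper's proof passes over silently, and you handle both correctly: you treat $a=1$ through Proposition~\ref{thm:rank-ab}, since Corollary~\ref{cor:fraenkel-oce} is stated only for $a\ge2$ and $0\le b<a$, and you check that dropping $(A_0,B_0)=(0,0)$ makes the mex over $\N_0$ agree with the one over $\N$.
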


\begin{proof}
For $b=1$, Corollary~\ref{cor:fraenkel-oce} gives $\delta_n=a\sigma_n+n$.
Thus $(\sigma_n),(\delta_n)$ form an increasing complementary pair satisfying
the same affine relation as Fraenkel's $\mathcal{P}$-positions $(A_n),(B_n)$
of $\mathrm{NIM}(a,1)$.  By Lemma~\ref{lem:mex-uniqueness} (with $c=0$),
such a pair is unique, so $A_n=\sigma_n$ and $B_n=\delta_n$.
\end{proof}

\subsection{A self-referential form for \texorpdfstring{$\sigma$}{sigma}}

For $a\ge2$, the deletions are never consecutive.  Indeed, since
$\sigma_{n}-\sigma_{n-1}\ge1$ and \eqref{eq:fraenkel-oce} holds,
\begin{equation}\label{eq:delta-gap-min}
  \delta_n-\delta_{n-1}=a(\sigma_n-\sigma_{n-1})+1 \ge a+1 \ge 3
  \qquad (n\ge2),
\end{equation}
so $\delta_n-1\notin\{\delta_1,\delta_2,\ldots\}$ for all $n\ge2$.
This lets us convert the two-sequence complementary equation into a one-sequence
self-referential identity.

\begin{lemma}\label{lem:selfref}
Assume $a\ge2$, and set $c:=b-1$. Then for every $n\ge2$,
\begin{equation}\label{eq:selfref}
  \sigma_{a\sigma_n+c} = a\sigma_n + n + c - 1.
\end{equation}
\end{lemma}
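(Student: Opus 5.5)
The plan is to read \eqref{eq:selfref} as a statement about the integer $\delta_n-1$: by Corollary~\ref{cor:fraenkel-oce}, $\delta_n-1=a\sigma_n+n+c-1$, which is exactly the right-hand side of \eqref{eq:selfref}. So it suffices to show two things: that $\delta_n-1$ is a survivor index, and that its rank among the survivors is $a\sigma_n+c$. The whole argument is a counting argument in the complementary partition $\{\sigma_k\}\sqcup\{\delta_k\}=\N$, in the same spirit as the proof of Proposition~\ref{thm:rank-ab}.

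First, I show that $\delta_n-1$ lies in $\{\sigma_k\}$ for $n\ge2$. It is a positive integer, since $\delta_n\ge\delta_1+3\ge 3$. It is not a deletion index: by \eqref{eq:delta-gap-min}, $\delta_{n-1}\le\delta_n-(a+1)<\delta_n-1$, and by Lemma~\ref{lem:mono-deletions} the deletion indices are strictly increasing. Hence no $\delta_k$ lies strictly between $\delta_{n-1}$ and $\delta_n$, and in particular $\delta_n-1$ is not a deletion. By complementarity it is a survivor.

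Second, I count. The deletion indices that are $\le\delta_n-1$ are exactly $\delta_1,\dots,\delta_{n-1}$, again by strict monotonicity together with the fact just shown. Therefore the number of survivor indices in $\{1,\dots,\delta_n-1\}$ is
\[
  (\delta_n-1)-(n-1)=\delta_n-n=a\sigma_n+c,
\]
using \eqref{eq:fraenkel-oce} once more. Since $\delta_n-1$ is itself a survivor, it is the $(a\sigma_n+c)$-th survivor, that is, $\sigma_{a\sigma_n+c}=\delta_n-1=a\sigma_n+n+c-1$.

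The only delicate point is the bookkeeping. I must check that the index $a\sigma_n+c$ is a genuine positive integer; this holds since $a\ge2$, $\sigma_n\ge2$ and $c\ge-1$. I must also use non-consecutiveness of deletions in exactly the right place. The hypothesis $n\ge2$ enters only through the gap bound \eqref{eq:delta-gap-min}, which compares $\delta_n$ with $\delta_{n-1}$. I expect no real obstacle beyond making this step explicit.
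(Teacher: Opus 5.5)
Your proof is correct and follows essentially the same route as the paper. Both arguments use the non-consecutiveness bound \eqref{eq:delta-gap-min} to show that $\delta_n-1$ is a survivor, count survivors to get $\sigma_{\delta_n-n}=\delta_n-1$, and then substitute \eqref{eq:fraenkel-oce}. The differences are cosmetic: you count survivors in $\{1,\dots,\delta_n-1\}$ rather than up to $\delta_n$, and you make the monotonicity and positivity checks explicit.
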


\begin{proof}
Fix $n\ge2$. By \eqref{eq:delta-gap-min}, the integer $\delta_n-1$ is not a
deletion, hence it is a survivor. Among the integers $\le \delta_n$, exactly
$n$ are deletions (namely $\delta_1,\ldots,\delta_n$), so the number of
survivors $\le \delta_n$ is $\delta_n-n$. Since $\delta_n-1$ is a survivor
and $\delta_n$ is not, $\delta_n-1$ is the $(\delta_n-n)$th survivor, i.e.
\[
  \sigma_{\delta_n-n}=\delta_n-1.
\]
Using \eqref{eq:fraenkel-oce} to rewrite $\delta_n-n=a\sigma_n+c$ and
$\delta_n-1=a\sigma_n+n+c-1$ yields \eqref{eq:selfref}.
\end{proof}

\subsection{Characteristic slope and continued fraction}

We now extract from \eqref{eq:selfref} the limiting slope of the normalized
survivors and deletions.  The key observation is that the self-referential
index sequence $m_n:=a\sigma_n+(b-1)$ has bounded gaps, so the subsequential
limits of $x_n:=\sigma_n/n$ along $(m_n)$ are the same as along the full
sequence.  This gives $L=f(L)$ for the set of limit points, and the
contraction then forces $L$ to be a singleton.

\begin{lemma}\label{lem:limit-set-same}
Assume $a\ge2$, and set
\[
  x_n:=\frac{\sigma_n}{n},
  \qquad
  m_n:=a\sigma_n+(b-1).
\]
Then the sequences $(x_n)$ and $(x_{m_n})$ have the same set of subsequential
limits.
\end{lemma}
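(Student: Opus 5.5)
The proof will use only two facts: consecutive terms of $(m_n)$ are at bounded distance from each other, and $(x_k)$ varies slowly in $k$. With these, the subsequence $(x_{m_n})$ samples $(x_k)$ densely enough that the two sequences share all accumulation points.

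\textbf{Preliminary facts.} By Proposition~\ref{thm:two-gap}, $\sigma_n-\sigma_{n-1}\in\{1,2\}$. Also $\sigma_1=1$, as noted before Lemma~\ref{lem:reconstruct-sigma}. Hence $n\le\sigma_n\le 2n$, and so $x_n\in[1,2]$ is bounded; every subsequence therefore has convergent sub-subsequences. Moreover $m_n-m_{n-1}=a(\sigma_n-\sigma_{n-1})\in\{a,2a\}$. So $(m_n)$ is strictly increasing, tends to infinity, and has gaps at most $2a$. It also takes positive values, since $m_1=a+b-1\ge a-1\ge1$, so $x_{m_n}$ is defined.

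\textbf{Slow variation.} For $k\ge1$ and $r\ge0$, I will write
\[
  x_{k+r}-x_k=\frac{\sigma_{k+r}-\sigma_k}{k+r}-\frac{r\,\sigma_k}{k(k+r)}.
\]
The first term lies in $[0,2r/k]$, because the survivor gaps are at most $2$. The second lies in $[0,2r/k]$, because $\sigma_k\le 2k$. Hence $|x_{k+r}-x_k|\le 2r/k$.

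\textbf{The two inclusions.} First, let $\lambda=\lim_i x_{m_{n_i}}$ along some $n_i\to\infty$. Since $(m_n)$ is strictly increasing, the indices $k_i:=m_{n_i}$ tend to infinity. So $\lambda$ is already a subsequential limit of $(x_k)$; this direction is trivial.

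Conversely, let $\lambda=\lim_i x_{k_i}$ with $k_i\to\infty$. For $k_i\ge m_1$, I choose $n_i$ maximal with $m_{n_i}\le k_i$. The gap bound gives $0\le k_i-m_{n_i}<2a$, so $m_{n_i}\to\infty$. Slow variation then gives
\[
  |x_{k_i}-x_{m_{n_i}}|\le \frac{4a}{m_{n_i}}\to0 .
\]
Thus $x_{m_{n_i}}\to\lambda$. Because $n_i\to\infty$ (as $m_{n_i}\to\infty$), a subsequence of $(n_i)$ is strictly increasing, and along it $\lambda$ is a subsequential limit of $(x_{m_n})$.

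\textbf{Main obstacle.} The only real content is the slow-variation estimate combined with the bounded gaps of $(m_n)$. Both follow directly from the two-gap property, so I expect no serious difficulty. The one point needing care is checking that the indices $m_n$ are positive and tend to infinity, so that the two notions of subsequential limit genuinely correspond.
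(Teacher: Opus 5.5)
Your proof is correct and follows essentially the same route as the paper. The two-gap property gives both the bounded gaps $m_{n+1}-m_n\in\{a,2a\}$ and the slow variation of $x_k$ over windows of length at most $2a$, which makes the two limit sets coincide. Your explicit estimate $|x_{k+r}-x_k|\le 2r/k$, your separate treatment of the two inclusions, and your check that $m_1\ge1$ make precise what the paper states more briefly via $O_M(1)$ bounds.
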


\begin{proof}
By the two-gap property (Proposition~\ref{thm:two-gap}),
$\sigma_{n+1}-\sigma_n\in\{1,2\}$, hence
$m_{n+1}-m_n=a(\sigma_{n+1}-\sigma_n)\in\{a,2a\}$.
So the gaps of $(m_n)$ are bounded by $2a$.

Fix $M\ge1$.  For $|r|\le M$, the numerator $\sigma_{n+r}-\sigma_n=O_M(1)$
while $\sigma_n\asymp n$ by the two-gap property, so
\[
  \sup_{|r|\le M}\left|\frac{\sigma_{n+r}}{n+r}-\frac{\sigma_n}{n}\right|
  \longrightarrow 0
  \qquad(n\to\infty).
\]
Since every sufficiently large integer $N$ lies within distance $2a$ of some
$m_n$, taking $M=2a$ shows that $x_N-x_{m_n}\to0$ whenever $|N-m_n|\le2a$.
Therefore $(x_n)$ and $(x_{m_n})$ have the same set of subsequential limits.
\end{proof}

\begin{theorem}\label{thm:slope}
Assume $a\ge2$. Then the limit $\lim_{n\to\infty}\sigma_n/n$ exists and is
the unique real number $\alpha=\alpha(a)\in(1,2)$ satisfying
\begin{equation}\label{eq:alpha-eq}
  \alpha = 1 + \frac{1}{a\alpha}
  \qquad\Longleftrightarrow\qquad
  a\alpha^2-a\alpha-1=0.
\end{equation}
Equivalently,
\begin{equation}\label{eq:alpha-formula}
  \alpha(a)=\frac{1+\sqrt{1+4/a}}{2}
  =\frac{a+\sqrt{a^2+4a}}{2a}.
\end{equation}
Moreover,
\begin{equation}\label{eq:beta-slope}
  \lim_{n\to\infty}\frac{\delta_n}{n} = a\alpha(a)+1.
\end{equation}
\end{theorem}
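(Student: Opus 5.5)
We follow the strategy announced before Lemma~\ref{lem:limit-set-same}: feed the self-referential identity \eqref{eq:selfref} into the limit points of $x_n=\sigma_n/n$. By the two-gap property (Proposition~\ref{thm:two-gap}) and $\sigma_1=1$, we have $n\le\sigma_n\le 2n-1$. Hence $(x_n)$ is bounded in $[1,2]$, and its set $L$ of subsequential limits is a nonempty compact subset of $[1,2]$.

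\textbf{Step 1: derive a fixed-point map on $L$.} Rewrite \eqref{eq:selfref} with $m_n=a\sigma_n+c$, $c=b-1$, as
\[
  x_{m_n}=\frac{\sigma_{m_n}}{m_n}=\frac{m_n+n-1}{m_n}=1+\frac{n-1}{a\sigma_n+c}.
\]
Suppose $x_{n_k}\to\ell$ along a subsequence. Since $m_n\to\infty$, the right-hand side tends to $1+1/(a\ell)$, using $\ell\ge1>0$. So with $f(t):=1+1/(at)$ we get $f(\ell)\in L(x_{m_n})$. By Lemma~\ref{lem:limit-set-same}, $L(x_{m_n})=L$, so $f(L)\subseteq L$. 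Conversely, any limit point of $(x_{m_n})$ arises along some $n_k$. Passing to a further subsequence with $x_{n_k}\to\ell$ shows it equals $f(\ell)$. Hence $L=f(L)$.

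\textbf{Step 2: contraction forces a singleton.} On $[1,2]$ we have $|f'(t)|=1/(at^2)\le 1/a\le 1/2$, so $f$ is a contraction there, with $f([1,2])\subseteq[1,1+1/a]\subseteq[1,2]$. Let $D=\max L-\min L$. Since $L=f(L)$, $D=\operatorname{diam} f(L)\le D/2$, so $D=0$. Thus $L=\{\alpha\}$ with $\alpha=f(\alpha)$, i.e.\ $a\alpha^2-a\alpha-1=0$. The unique root in $(1,2)$ is $(a+\sqrt{a^2+4a})/(2a)$. The other root is negative, and $\alpha>1$ because $1+1/(a\alpha)>1$; $\alpha<2$ because $f(\alpha)\le 1+1/a<2$. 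Therefore the limit of $x_n$ exists and equals $\alpha(a)$, giving \eqref{eq:alpha-eq} and \eqref{eq:alpha-formula}.

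\textbf{Step 3: slope of the deletions.} Divide the rank identity \eqref{eq:fraenkel-oce} by $n$: $\delta_n/n=a\sigma_n/n+1+(b-1)/n\to a\alpha+1$, which is \eqref{eq:beta-slope}.

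The main obstacle is the identification $L=f(L)$ in Step 1, specifically the inclusion $L\subseteq f(L)$, which needs Lemma~\ref{lem:limit-set-same}. The direction $f(L)\subseteq L$ alone does not suffice: a contraction could map a larger $L$ strictly into itself. With equality in hand, the diameter argument closes the proof immediately.
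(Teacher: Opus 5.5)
Your proposal is correct and follows essentially the same route as the paper: you obtain $x_{m_n}=1+(n-1)/(a\sigma_n+c)$ from Lemma~\ref{lem:selfref}, get $L=f(L)$ via Lemma~\ref{lem:limit-set-same}, conclude with the contraction/diameter argument, and then read off $\delta_n/n$ from the rank identity. You are also right that $f(L)\subseteq L$ alone would not suffice, and that the bounded-gap lemma is exactly what supplies the reverse inclusion $L\subseteq f(L)$.
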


\begin{proof}
By the two-gap property (Proposition~\ref{thm:two-gap}), $\sigma_n/n\in[1,2]$
for all $n$, so the set $L$ of subsequential limits of $x_n:=\sigma_n/n$ is a
nonempty compact subset of $[1,2]$.

Set $c:=b-1$, $m_n:=a\sigma_n+c$, and $f(x):=1+1/(ax)$.
By Lemma~\ref{lem:selfref},
\[
  \sigma_{m_n} = a\sigma_n + n + c - 1,
\]
so
\[
  x_{m_n}
  = \frac{\sigma_{m_n}}{m_n}
  = \frac{a\sigma_n+n+c-1}{a\sigma_n+c}
  = 1 + \frac{n-1}{a\sigma_n+c}.
\]
If $x_{n_k}\to x\in L$, then $x_{m_{n_k}}\to f(x)$, giving $f(L)\subseteq L$.

Conversely, every subsequential limit $y$ of $(x_{m_n})$ arises as $y=f(x)$
for some $x\in L$, so the set of subsequential limits of $(x_{m_n})$ is exactly
$f(L)$.  By Lemma~\ref{lem:limit-set-same}, $(x_n)$ and $(x_{m_n})$ share the
same subsequential limits, hence
\[
  L = f(L).
\]

Now $f$ is a strict contraction on $[1,2]$:
\[
  |f'(x)| = \frac{1}{ax^2} \le \frac{1}{a} \le \frac{1}{2}.
\]
Therefore
\[
  \operatorname{diam}(L)
  = \operatorname{diam}(f(L))
  \le \frac{1}{a}\,\operatorname{diam}(L),
\]
which forces $\operatorname{diam}(L)=0$.  Hence $L=\{\alpha\}$ is a singleton and
$\sigma_n/n\to\alpha$.  The fixed-point equation $\alpha=f(\alpha)$ is exactly
\eqref{eq:alpha-eq}, and the formula \eqref{eq:alpha-formula} follows by the
positive root of the resulting quadratic.

Finally, \eqref{eq:beta-slope} is immediate from \eqref{eq:fraenkel-oce}:
$\delta_n/n = a(\sigma_n/n)+1+(b-1)/n \to a\alpha+1$.
\end{proof}

The slope $\alpha(a)$ has an explicit periodic continued fraction.

\begin{proposition}\label{prop:cf}
Let $\alpha=\alpha(a)$ be as in Theorem~\ref{thm:slope}, and put
$\beta:=a\alpha+1$. Then
\[
  \alpha = [1;\overline{a,1}]
  \qquad\text{and}\qquad
  \beta = [a+1;\overline{1,a}].
\]
\end{proposition}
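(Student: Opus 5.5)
The plan is to exploit the fixed-point equation \eqref{eq:alpha-eq} directly. From $\alpha = 1 + \frac{1}{a\alpha}$ and $\beta = a\alpha+1$ we get $\alpha = 1 + 1/(\beta-1)$, so the two numbers are tied together by a two-step relation. Throughout, I use that $\alpha>1$ is irrational: by \eqref{eq:alpha-formula}, $a^2+4a$ lies strictly between the consecutive squares $(a+1)^2$ and $(a+2)^2$. Hence every continued fraction below is infinite and every partial quotient is forced by a floor computation.

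\textbf{Step 1: the expansion of $\beta$.} I compute it first. Since $\alpha\in(1,2)$ (Theorem~\ref{thm:slope}), $a\alpha\in(a,2a)$, and more precisely $a\alpha = a + 1/\alpha$ with $1/\alpha\in(1/2,1)$. Hence
\[
\beta = a\alpha+1 = (a+1) + \frac{1}{\alpha},
\]
so $\lfloor\beta\rfloor = a+1$ and the complete quotient after the first step is $\alpha$ itself. So $\beta = [a+1; \text{expansion of }\alpha]$. It therefore suffices to prove $\alpha=[1;\overline{a,1}]$, since then $\beta = [a+1;1,\overline{a,1}] = [a+1;\overline{1,a}]$.

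\textbf{Step 2: the expansion of $\alpha$.} The floor is $\lfloor\alpha\rfloor=1$, and the first complete quotient is $1/(\alpha-1) = a\alpha = a + 1/\alpha$. Its integer part is $a$ because $1/\alpha\in(0,1)$, and the next complete quotient is $1/(1/\alpha)=\alpha$. Thus the complete quotients cycle $\alpha\mapsto a\alpha\mapsto\alpha$, with partial quotients $1,a,1,a,\dots$. This gives $\alpha=[1;a,1,a,\dots]=[1;\overline{a,1}]$.

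The identity $\frac1{\alpha-1}=a\alpha$ is just \eqref{eq:alpha-eq} rearranged. For $a\alpha = a+1/\alpha$, multiply \eqref{eq:alpha-eq} by $a$ to get $a\alpha^2=a\alpha+1$, then divide by $\alpha$.

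\textbf{Main obstacle.} There is no real obstacle. The only point needing care is justifying the floor values, that is, $1/\alpha\in(0,1)$ and $\alpha\in(1,2)$. For $a=1$ the golden ratio also fits the formula, but the statement is only needed for $a\ge2$ via Theorem~\ref{thm:slope}. As an optional check, $[1;\overline{a,1}]$ satisfies $x = 1 + \cfrac{1}{a+\cfrac{1}{x}}$, which simplifies to $ax^2-ax-1=0$, and the positive root of that quadratic is $\alpha$.
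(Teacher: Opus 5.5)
Your proof is correct and follows essentially the paper's route: the paper likewise rewrites \eqref{eq:alpha-eq} as $\alpha = 1 + 1/(a+1/\alpha)$, i.e.\ $\alpha=[1;a,\alpha]$, and uses $a\alpha = a+1/\alpha$ to get $\beta=(a+1)+1/\alpha$. Your explicit check of the partial quotients through the cycle of complete quotients $\alpha\mapsto a\alpha\mapsto\alpha$, using $\alpha\in(1,2)$ and $1/\alpha\in(0,1)$, adds some rigor that the paper leaves implicit.
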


\begin{proof}
From \eqref{eq:alpha-eq} we get
\[
  \alpha = 1+\frac{1}{a\alpha}
  = 1 + \frac{1}{a + \frac{1}{\alpha}},
\]
which is the continued-fraction identity
$\alpha=[1;a,\alpha]$, hence $\alpha=[1;\overline{a,1}]$.

For $\beta=a\alpha+1$, we use $a\alpha = a + 1/\alpha$
(from \eqref{eq:alpha-eq}), giving
\[
  \beta = a + 1 + \frac{1}{\alpha} = [a+1;\overline{1,a}].
\]
\end{proof}

\begin{corollary}\label{cor:beatty-dichotomy}
The sequences $(\sigma_n)$ and $(\delta_n)$ are Beatty sequences if and only if $a=1$.
For $a=1$ ($b=0$ under the standing assumption), the rank identity $\delta_n=\sigma_n+n-1$
and $\varphi^2=\varphi+1$ force both sequences to be (non-homogeneous) Beatty sequences
(Corollary~\ref{cor:wythoff}).
For $a\ge2$, the multiplicative factor $a\ge2$ in $\delta_n=a\sigma_n+n+(b-1)$
is incompatible with the Beatty condition.  More precisely, if $\sigma_n=\lfloor\alpha n+\gamma\rfloor$
were a Beatty sequence, then $\delta_n = a\lfloor\alpha n+\gamma\rfloor+n+(b-1)$, but
$a\lfloor x\rfloor = \lfloor ax\rfloor$ fails whenever $\{x\}\ge 1/a$ (which occurs for
a positive-density set of indices since $\alpha$ is irrational), so $(\delta_n)$ cannot equal
$\lfloor\beta n+\gamma'\rfloor$ for any $\beta,\gamma'$.
This parallels Fraenkel's result~\cite{Fraenkel1998} that the $\mathcal{P}$-positions
of $\mathrm{NIM}(a,1)$ admit no Beatty representation for $a\ge2$.
\end{corollary}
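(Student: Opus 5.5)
The corollary has two directions, and I plan to prove them separately. Throughout, "Beatty sequence" means a sequence of the form $\lfloor \theta n+\gamma\rfloor$.

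\textbf{The case $a=1$.} This is already contained in Corollary~\ref{cor:wythoff}. There, $s_n=\lfloor (n-1)\varphi\rfloor+2$ and $d_n=\lfloor (n-1)\varphi^2\rfloor+2$. Each can be rewritten as $\lfloor \varphi n+(2-\varphi)\rfloor$ and $\lfloor \varphi^2 n+(2-\varphi^2)\rfloor$, which are non-homogeneous Beatty sequences. The only exception is the degenerate term $d_1=1$, which I would note explicitly.

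\textbf{The case $a\ge2$, by contradiction.} Suppose $\sigma_n=\lfloor\theta n+\gamma\rfloor$ for all large $n$. Theorem~\ref{thm:slope} forces $\theta=\alpha(a)$, and $\alpha(a)$ is irrational because $a^2+4a$ is not a perfect square when $a\ge2$ (it lies strictly between $(a+1)^2$ and $(a+2)^2$).

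By Proposition~\ref{thm:two-gap}, $\sigma_n-\sigma_{n-1}=1+\lfloor\alpha n+\gamma\rfloor-\lfloor\alpha(n-1)+\gamma\rfloor-1$. So $H$ would be the mechanical (Sturmian) word of slope $\alpha-1$, and in particular it would be balanced.

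I would then try to contradict this using the self-referential identity of Lemma~\ref{lem:selfref}. That identity gives $\sigma_{a\sigma_n+c}-a\sigma_n=n+c-1$ exactly. Substituting the Beatty form yields
\[
\lfloor \alpha(a\sigma_n+c)+\gamma\rfloor-a\sigma_n=n+c-1 .
\]
Using $a\alpha^2=a\alpha+1$, I write $\alpha a\sigma_n=a\sigma_n+(a\alpha-a)\sigma_n$ with $a\alpha-a=1/\alpha$. The identity then becomes
\[
\lfloor \sigma_n/\alpha+\alpha c+\gamma\rfloor=n+c-1 .
\]
Next I substitute $\sigma_n=\alpha n+\gamma-\{\alpha n+\gamma\}$. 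This gives
\[
\sigma_n/\alpha=n+\gamma/\alpha-\{\alpha n+\gamma\}/\alpha .
\]
The left-hand floor therefore equals $n+\lfloor K-\{\alpha n+\gamma\}/\alpha\rfloor$, where $K:=\gamma/\alpha+\alpha c+\gamma$ is a constant. The identity requires this floor to equal $c-1$ for every large $n$.

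By Weyl equidistribution, $\{\alpha n+\gamma\}$ is dense in $[0,1)$, so $\{\alpha n+\gamma\}/\alpha$ ranges densely over an interval of length $1/\alpha$. Since $1/\alpha<1$, the floor can still stay constant, so this step does not immediately give a contradiction. The main obstacle is precisely here: the required constancy is compatible with some choices of $\gamma$.

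I would therefore fall back on the deletion side, which is the route the corollary statement itself suggests. From Proposition~\ref{prop:del-gaps-H}, $\delta_n-\delta_{n-1}\in\{a+1,2a+1\}$, and these two values differ by $a\ge2$. A Beatty sequence $\lfloor\beta n+\gamma'\rfloor$ has gaps in $\{\lfloor\beta\rfloor,\lceil\beta\rceil\}$, which differ by at most $1$. Both gap values occur, because $H$ has frequency $\rho(a)\in(0,1)$ by Proposition~\ref{thm:density-H}. Hence $(\delta_n)$ is never Beatty when $a\ge2$, and this argument is clean and immediate.

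For $(\sigma_n)$, the analogous gap argument fails, since its gaps are $1$ and $2$. My plan there is to show that $H$ is unbalanced. I would exhibit two factors of the same length whose numbers of $1$'s differ by $2$, for example by checking the data in Table~\ref{tab:gapword-prefix}. This is consistent with the Remark noting $p(3)=5$ for $(a,b)=(2,0)$, since a Beatty gap word is Sturmian and so has $p(3)=4$.

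A proof valid for all $(a,b)$ would derive unbalance from the identity $\delta_n=a\sigma_n+n+c$. My attempt would be to locate a window of length $N$ on which $H$ contains the factor $0000$ (from $a$-fold spacing), and to compare it with a window where the complementary factor $11$ or $101$ occurs. This step is the one I regard as hardest, and it may only be checkable case by case.
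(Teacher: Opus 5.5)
Your proposal proves the corollary, but for $a\ge2$ it takes a different route from the paper. The paper argues conditionally. It assumes $\sigma_n=\lfloor\alpha n+\gamma\rfloor$, substitutes this into $\delta_n=a\sigma_n+n+(b-1)$, and uses the failure of $a\lfloor x\rfloor=\lfloor ax\rfloor$ on a positive-density set of indices to conclude that $(\delta_n)$ is not of the form $\lfloor\beta n+\gamma'\rfloor$. That only shows that $\sigma$ and $\delta$ are not \emph{both} Beatty. Its last step, a mismatch with the single expression $\lfloor (a\alpha+1)n+a\gamma+b-1\rfloor$, does not by itself exclude other offsets $\gamma'$.

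Your argument is unconditional and makes the paper's heuristic precise. By Proposition~\ref{prop:del-gaps-H}, $\delta_n-\delta_{n-1}\in\{a+1,2a+1\}$, and both values occur infinitely often. Every sequence $\lfloor\beta n+\gamma'\rfloor$ has gaps in $\{\lfloor\beta\rfloor,\lfloor\beta\rfloor+1\}$. So $(\delta_n)$ is never Beatty for $a\ge2$, whatever $(\sigma_n)$ does. This already gives the ``only if'' direction in the form the paper proves. You can drop the detour through Lemma~\ref{lem:selfref}; you are right that it yields no contradiction. The $\sigma$-part you flag as hardest is not needed for the corollary as argued. Your $a=1$ direction is the same appeal to Corollary~\ref{cor:wythoff} as in the paper.

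You may want the stronger reading suggested by Remark~\ref{rem:beatty-intro}, that $(\sigma_n)$ itself is never Beatty. For that, no case-by-case check is needed, and inspecting Table~\ref{tab:gapword-prefix} would not prove it for general $(a,b)$ anyway.

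\begin{itemize}
\item Since $\delta_k-\delta_{k-1}-1=a(1+H(k))$ and deletions are isolated, the survivors occur in maximal runs of exactly $L\in\{a,2a\}$ consecutive integers.
\item Each run contributes a block $1\,0^{L-1}$ to $H$. So after a finite prefix, $H$ is a concatenation of $1\,0^{a-1}$ and $1\,0^{2a-1}$, each occurring infinitely often.
\item Then $1\,0^{a-1}1$ and $0^{a+1}$ are both factors of $H$ of length $a+1$; the latter sits inside $0^{2a-1}$ because $a\ge2$. They contain two and zero $1$'s respectively.
\item Hence $H$ is unbalanced, so it is not a mechanical word, and $(\sigma_n)$ is not Beatty.
\end{itemize}

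Note that the factor $11$ never occurs in $H$, and $0000$ is absent when $a=2$. So the right comparison is $1\,0^{a-1}1$ against $0^{a+1}$, not the factors you guessed.
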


\subsection{Kimberling rank transform interpretation (for \texorpdfstring{$b\ge1$}{b>=1})}

The complementary equation~\eqref{eq:fraenkel-oce} also admits a convenient formulation in terms of
Kimberling's \emph{rank transform}. We use the definition given in
\cite{Kimberling2011} (see also A187224 in~\cite{OEIS}).

\begin{definition}[Block sequence $u_{a,b}$]\label{def:uab}
Assume $a\ge1$ and $b\ge1$. Define the nondecreasing sequence
$u_{a,b}=(u_{a,b}(n))_{n\ge1}$ by
\[
  u_{a,b}(n)=\max\!\left(0,\left\lceil\frac{n-b+1}{a}\right\rceil\right).
\]
Equivalently, $u_{a,b}$ begins with $b-1$ copies of $0$, and then contains
exactly $a$ copies of each integer $m\ge1$.
\end{definition}

\begin{definition}[Rank transform $R(u)$]\label{def:ranktransform-new}
Let $u=(u_n)_{n\ge1}$ be a nondecreasing sequence of nonnegative integers, and
let $r=(r_n)_{n\ge1}$ be an increasing sequence of positive integers. Define
\[
  \tilde h(1):=u_1,\qquad
  \tilde h(n):=\#\{\,i:\ r_i\in[u_{n-1},u_n)\ \}\quad(n\ge2),
\]
and set $T_u(r)=(r'_n)$ by $r'_1:=1$ and
\[
  r'_n:=r_{n-1}+\tilde h(n)+1\qquad(n\ge2).
\]
If $r$ is a fixed point of $T_u$ (i.e.\ $T_u(r)=r$), we call $r$ the
\emph{rank transform} of $u$ and write $r=R(u)$.
(We write $\tilde h$ for Kimberling's counting function to avoid conflict
with the binary gap word $H$ of Definition~\ref{def:gapword-H} and the
sieve pointer $h_n$.)
\end{definition}

\begin{remark}
Known rank transforms include (cf.~\cite{Kimberling2007}):
$R(1,1,1,\ldots)=(1,2,3,\ldots)$;
$R(n)_{n\ge1}=\text{A000201}$ (lower Wythoff);
$R(\lfloor 3n/2\rfloor)=\text{A187224}$;
$R(2n-1)_{n\ge1}=(2n-1)_{n\ge1}$ (odd numbers form a fixed point).
\end{remark}

The sieve and the rank transform produce the same complementary pair.

\begin{theorem}\label{thm:ranktransform-new}
Assume $a\ge2$ and $b\ge1$.  The golden sieve on~$a\N+b$ and
Kimberling's rank transform~\cite{Kimberling2011} on the block
sequence $u_{a,b}$ of Definition~\textup{\ref{def:uab}} produce the
same complementary pair.  More precisely, if $R(u_{a,b})=(r_n)$ denotes
the rank transform and $c_n := n + a\,r_n + (b-1)$ its companion, then
\begin{equation}\label{eq:c-from-r}
  (\sigma_n,\,\delta_n) \;=\; (r_n,\,c_n)
  \qquad\text{and}\qquad
  \{r_n\}\sqcup\{c_n\}=\N.
\end{equation}
\end{theorem}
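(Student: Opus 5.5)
The plan is to verify directly that the normalized survivor sequence $(\sigma_n)$ is a fixed point of $T_{u}$ with $u=u_{a,b}$. Next, I would show that this fixed point is unique, so that $R(u_{a,b})=(\sigma_n)$. The statement about $c_n$ then follows from the rank identity. Concretely, once $r_n=\sigma_n$ is known, Corollary~\ref{cor:fraenkel-oce} gives $c_n=n+a\sigma_n+(b-1)=\delta_n$, and the complementarity $\{r_n\}\sqcup\{c_n\}=\N$ is just $\{\sigma_n\}\sqcup\{\delta_n\}=\N$ from Definition~\ref{def:normalized}.

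\textbf{Step 1: the fixed-point equation.} Since $a\ge2$ we have $\sigma_1=1=r'_1$. For $n\ge2$ we need
\[
\sigma_n-\sigma_{n-1}-1=\tilde h(n).
\]
By Definition~\ref{def:gapword-H} and Theorem~\ref{thm:hiccup-ab}(i), the left side equals $H(n)=\mathbf{1}_{n\in\{s_k\}}$. For the right side I would compute the block sequence explicitly, using $u(n)=\lceil (n-b+1)/a\rceil$ and $u(n-1)=\lceil (n-b)/a\rceil$ (for $n\ge b$).
\begin{itemize}
\item These two values differ only when $a\mid n-b$, i.e.\ when $n=am+b$ with $m=u(n-1)$, and then $u(n)=m+1$.
\item Otherwise the interval $[u(n-1),u(n))$ is empty and $\tilde h(n)=0$.
\item When $n=am+b$ with $m\ge1$, the interval $[m,m+1)$ contains a term of $(\sigma_i)$ iff $m$ is a normalized survivor. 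This holds iff $n=a\sigma_j+b=s_j$ for some $j$.
\end{itemize}
Hence $\tilde h(n)=\mathbf{1}_{n\in\{s_k\}}=H(n)$.

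\textbf{Step 2: boundary cases.} These are the indices $n\le b$, where the $\max(0,\cdot)$ truncation matters. For $n<b$ both $u$-values are $0$, so $\tilde h(n)=0$. At $n=b$ the interval is $[0,1)$, which contains no positive $\sigma_i$, so again $\tilde h(n)=0$. This is consistent with the left side, because every survivor satisfies $s_k\ge a+b>b$, so no $n\le b$ lies in $\{s_k\}$.

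\textbf{Step 3: uniqueness of the fixed point.} This is what justifies writing $R(u_{a,b})$ as a single sequence. Observe that $\tilde h(n)$ only counts indices $i$ with $r_i<u(n)$. Since $u(n)\le\lceil n/a\rceil<n$ for $n\ge2$ and $r_i\ge i$ for any increasing positive sequence, only $r_1,\dots,r_{n-1}$ enter the computation of $r'_n$. Any fixed point is therefore determined recursively from $r_1=1$, so it is unique and equals $(\sigma_n)$.

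I expect the main obstacle to be the bookkeeping in Step 1: matching the exact ceiling and offset conventions of $u_{a,b}$ with the hiccup test $n\in\{s_k\}$, including the truncated initial block handled in Step 2. The rest is a direct appeal to Theorem~\ref{thm:hiccup-ab} and Corollary~\ref{cor:fraenkel-oce}.
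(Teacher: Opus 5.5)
Your proof is correct, but it takes a different route from the paper.

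\textbf{The paper's route.} The paper never works with the recursion $r'_n=r_{n-1}+\tilde h(n)+1$. It appeals instead to Kimberling's joint-ranking description of the rank transform, with $u$-entries preceding $r$-entries in ties. From this it reads off the joint rank of $r_n$ as $n+ar_n+(b-1)=c_n$ and asserts that $r$ and $c$ are complementary. It then concludes $(r,c)=(\sigma,\delta)$ from the mex-uniqueness Lemma~\ref{lem:mex-uniqueness} with $c=b-1\ge0$. That conclusion needs only the rank identity of Corollary~\ref{cor:fraenkel-oce}.

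\textbf{Your route.} You check the fixed-point recursion of Definition~\ref{def:ranktransform-new} directly. The jumps of $u_{a,b}$ occur exactly at $n\equiv b\pmod a$, and each jump probes whether $(n-b)/a$ is a normalized survivor. That is precisely the hiccup test of Theorem~\ref{thm:hiccup-ab}(i). You then get uniqueness from a causality argument: $u(n)\le\lceil n/a\rceil<n$ and $r_i\ge i$ together show that $r'_n$ depends only on $r_1,\dots,r_{n-1}$.

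\textbf{What each approach buys.} Your approach works from the definition as the paper actually states it. It also proves that $R(u_{a,b})$ is well defined, i.e.\ that the fixed point exists and is unique, which the paper takes for granted. Complementarity then comes for free from the sieve, rather than from a link between the recursive definition and the joint ranking that the paper imports from Kimberling without deriving. The cost is that you rely on the hiccup rule, which is a stronger structural input than the rank identity plus mex uniqueness used by the paper.

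One small step should be made explicit. In the case $a\nmid n-b$ you need $n\notin\{s_k\}$. This holds because every $s_k=a\sigma_k+b\equiv b\pmod a$.
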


\begin{proof}
By Definition~\ref{def:ranktransform-new}, $r$ has a complementary sequence $c$ obtained as
the rank of $r_n$ when the multiset $\{u_1,u_2,\ldots\}$ and the set
$\{r_1,r_2,\ldots\}$ are jointly ranked (with the convention that $u$-entries
precede $r$-entries in ties).

For the specific block sequence $u_{a,b}$, the
number of $u$-entries $\le r_n$ is $(b-1)+a r_n$. Adding the $n$ entries
$r_1,\ldots,r_n$ gives that the joint-rank of $r_n$ is
\[
  c_n = n+a\,r_n+(b-1),
\]
which is \eqref{eq:c-from-r}. Hence $\{r_n\}\sqcup\{c_n\}=\N$.

On the other hand, the golden sieve produces normalized complementary sequences
$(\sigma,\delta)$ satisfying the same affine relation
$\delta_n=n+a\sigma_n+(b-1)$ (Corollary~\ref{cor:fraenkel-oce}).  Since $b\ge1$,
we have $c:=b-1\ge0$, so Lemma~\ref{lem:mex-uniqueness} applies (with $c=b-1$)
and the pair $(\sigma,\delta)$ is uniquely determined.  Hence
$(\sigma_n,\delta_n)=(r_n,c_n)$ for all $n\ge1$.
\end{proof}

\begin{remark}
The restriction $b\ge1$ in Definition~\ref{def:uab} ensures the initial block of
zeros has nonnegative length; for $b=0$ the complementary equation becomes
$\delta_n=a\sigma_n+n-1$, which requires a separate treatment.
\end{remark}
\section{Variants}\label{sec:variants}

\subsection{The golden sieve on \texorpdfstring{$\{n^2\}$}{squares}}\label{sec:squares}

We consider the golden sieve on the perfect squares
$W_0=(1^2,2^2,3^2,\ldots)$, where a new self-referential
phenomenon appears.  The hiccup rule involves a \emph{quadratic} nesting,
with gaps governed by squares of survivors.

\begin{definition}[Golden sieve on squares]\label{def:squaresieve}
Let $W_0=(1,4,9,16,25,\ldots)=(n^2)_{n\ge1}$ and run the golden sieve
(Definition~\ref{def:sieve}).  Write $s_n$ for the $n$-th survivor and $d_n$
for the $n$-th deletion (both perfect squares).  The \emph{normalized}
survivor and deletion indices are
\[
  \mu_n:=\sqrt{s_n},\qquad \lambda_n:=\sqrt{d_n},
\]
so that $s_n=\mu_n^2$ and $d_n=\lambda_n^2$.
\end{definition}
\noindent
Since $(\sigma_n),(\delta_n)$ already denote normalized sequences for
arithmetic progressions
(Sections~\ref{sec:general}--\ref{sec:hiccup-ab}) and $\nu$ denotes
the normalization map, we write $(\mu_n),(\lambda_n)$ for the
square-root indices in this subsection.

The first terms are
\begin{align*}
  (\mu_n) &= 2,3,4,5,6,7,8,9,11,12,13,14,15,16,17,19,20,21,\ldots,\\
  (\lambda_n) &= 1,10,18,28,40,54,70,88,129,153,179,207,237,269,303,\ldots
\end{align*}
Note that $\mu_n$ and $\lambda_n$ are \emph{root-indices}: the actual
survivors and deletions are the perfect squares $s_n=\mu_n^2$ and
$d_n=\lambda_n^2$ (e.g.\ the second deletion is $d_2=10^2=100$, not~$10$).

We first establish the pointer--survivor identity.

\begin{lemma}\label{lem:squares-pointer}
For $n\ge2$, the pointer at step~$n$ satisfies $h_n=s_n=\mu_n^2$.
\end{lemma}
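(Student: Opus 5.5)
The plan is to imitate the proof of Proposition~\ref{thm:pointer-survivor}, handling the degenerate first step separately, as in Lemma~\ref{lem:rank-N}. Since $W_0(1)=1$, the first step is degenerate: $h_1=W_0(1)=1$ and $d_1=W_0(1)=1$, so $\lambda_1=1$. Hence $W_1=(4,9,16,25,\ldots)=((m+1)^2)_{m\ge1}$, and from now on I work only with $W_1$ and the steps $m\ge2$.

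\textbf{Step 1: the pointer is ahead of the current position.} For every $m\ge2$,
\[
  h_m=W_{m-1}(m)\ \ge\ W_1(m)=(m+1)^2\ \ge\ m+1 .
\]
The first inequality holds because deleting entries from an increasing sequence cannot decrease its $m$-th term. So at every step $m\ge2$ the deletion occurs at position $h_m>m$ of $W_{m-1}$.

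\textbf{Step 2: prefix stabilization.} Fix $n\ge2$ and take any $m\ge n$. By Step~1 we have $h_m\ge m+1>n$, so Lemma~\ref{lem:prefix-stability} shows that step~$m$ leaves the first $n$ entries of the working sequence unchanged. Thus the tuple $(W_{n-1}(1),\dots,W_{n-1}(n))$ is frozen from time $n-1$ onward, and all of its entries are survivors.

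\textbf{Step 3: identifying the frozen prefix.} Conversely, every survivor smaller than $W_{n-1}(n)$ is still present in $W_{n-1}$, which is increasing. It therefore appears among these first $n$ entries. So the frozen prefix is exactly the list of the $n$ smallest survivors, $(s_1,\dots,s_n)$, and in particular $h_n=W_{n-1}(n)=s_n=\mu_n^2$.

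There is one indexing point to check here. Since $d_1=1$ is not a survivor, $s_1=4$ is $W_1(1)$, and the survivors are listed in order inside $W_1$. The claim is consistent with the data: $h_2=W_1(2)=9=s_2=3^2$.

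\textbf{Main obstacle.} This is the bookkeeping around the degenerate first step. Position $n$ in $W_{n-1}$ must correspond to the $n$-th survivor even though $d_1=1$ sits below all the survivors. It does, because $1$ is removed at step~$1$, before any prefix is frozen. The statement is restricted to $n\ge2$ for the same reason: at $n=1$ the pointer is $h_1=1\ne s_1$.
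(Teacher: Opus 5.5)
Your proof is correct and takes the same route as the paper, whose one-line proof notes that $W_0(m)=m^2\ge m+1$ for $m\ge2$, so that Lemma~\ref{lem:prefix-stability} applies from step~$2$ onward, exactly as in Proposition~\ref{thm:pointer-survivor}. You spell out what the paper leaves implicit: the degenerate first deletion $d_1=1$, and the identification of the frozen prefix with $(s_1,\dots,s_n)$. Your bound via $W_1(m)=(m+1)^2$ serves equally well as the paper's bound via $W_0(m)=m^2$.
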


\begin{proof}
Since $W_0(m)=m^2\ge m+1$ for all $m\ge2$, Lemma~\ref{lem:prefix-stability}
applies from step~$2$ onward.
\end{proof}

\begin{lemma}\label{lem:squares-mono}
For the golden sieve on $\{n^2\}$, $(\lambda_n)_{n\ge1}$ is strictly increasing.
\end{lemma}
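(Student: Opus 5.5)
I will adapt the argument of Lemma~\ref{lem:mono-deletions} to the squares, treating step~$1$ separately because it is degenerate. At step~$1$ we have $h_1=W_0(1)=1$, so $d_1=1$ and $\lambda_1=1$. After this step $W_1=(4,9,16,\ldots)$ and every later deletion is a square $\ge4$, so $\lambda_n\ge2>\lambda_1$ for all $n\ge2$. The data confirm this: $\lambda_2=10$.

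For $n\ge2$ I use Lemma~\ref{lem:squares-pointer}, which gives $h_n=s_n=\mu_n^2$. Since the survivors $s_n$ are listed in increasing order, $(h_n)_{n\ge2}$ is strictly increasing.

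One point needs checking: that Lemma~\ref{lem:squares-pointer} really identifies $W_{n-1}(n)$ with the $n$-th survivor. The argument is the one used in Proposition~\ref{thm:pointer-survivor}. For every step $m\ge2$ the pointer satisfies $h_m=W_{m-1}(m)\ge W_0(m)=m^2\ge m+1$. So by Lemma~\ref{lem:prefix-stability}, no step $m\ge n$ touches the first $n$ positions. The step $m=1$ occurs before stabilization and only removes the value $1$, which is not a survivor. Hence the first $n$ entries of $W_{n-1}$ are exactly $s_1<\dots<s_n$.

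Now fix $n\ge2$ and compare $d_n$ with $d_{n+1}$. At step $n$ the entry at position $h_n$ of $W_{n-1}$ is removed. Since $h_{n+1}>h_n$, the entry at position $h_{n+1}$ of $W_n$ is the entry at position $h_{n+1}+1$ of $W_{n-1}$. Therefore
\[
  d_{n+1}=W_n(h_{n+1})=W_{n-1}(h_{n+1}+1)>W_{n-1}(h_n)=d_n .
\]
Taking square roots, which is monotone on positive integers, gives $\lambda_{n+1}>\lambda_n$. Together with the base case this proves the claim.

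I expect the main obstacle to be the junction between the degenerate first step and the stabilized regime. One must make sure that the survivor indexing used in Lemma~\ref{lem:squares-pointer} is consistent, and that $h_n<h_{n+1}$ holds starting from $n=2$. The comparison of $\lambda_1$ with $\lambda_2$ is handled directly and does not need this.
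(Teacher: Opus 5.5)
Your proposal is correct and follows the paper's route. You get strictly increasing pointers $h_n=\mu_n^2$ for $n\ge2$ from the pointer--survivor identity, and then apply the shift argument $d_{n+1}=W_n(h_{n+1})=W_{n-1}(h_{n+1}+1)>W_{n-1}(h_n)=d_n$. Your separate treatment of the degenerate comparison $\lambda_1=1<\lambda_2$, and your re-check of prefix stabilization from step $2$ onward, spell out details that the paper's short proof leaves implicit.
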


\begin{proof}
By Lemma~\ref{lem:squares-pointer}, $(h_n)_{n\ge2}=(\mu_n^2)_{n\ge2}$
is strictly increasing.  At step~$n$, the value $d_n$ is deleted from
position $h_n$ of $W_{n-1}$.  Since $h_{n+1}>h_n$, the element read at
position $h_{n+1}$ in $W_n$ was at position $h_{n+1}+1$ in $W_{n-1}$,
hence
\[
  d_{n+1}=W_n(h_{n+1})=W_{n-1}(h_{n+1}+1)>W_{n-1}(h_n)=d_n.
\]
Applying $\sqrt{\cdot}$ yields $\lambda_{n+1}>\lambda_n$.
\end{proof}

This gives the rank identity for the square sieve.

\begin{proposition}\label{thm:squares-rank}
For $n=1$, the pointer and target coincide at $1^2$, so $\lambda_1=1$ and $\mu_1=2$.
For every $n\ge2$:
\begin{equation}\label{eq:squares-rank}
  \lambda_n = \mu_n^2 + (n-1).
\end{equation}
\end{proposition}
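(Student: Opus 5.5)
The plan mirrors the rank-identity arguments of Lemma~\ref{lem:rank-N} and Proposition~\ref{thm:rank-ab}, transported to the square setting through the root-index normalization $m^2\mapsto m$.

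\emph{Step~1 (the degenerate step $n=1$).} One checks directly that $h_1=W_0(1)=1$ and $d_1=W_0(1)=1$. Hence $\lambda_1=1$, and $W_1=(4,9,16,\ldots)$, so $\mu_1=2$ provided $2^2$ is never deleted. This follows from the prefix argument in Step~2: every later deletion is at a position $h_m\ge m+1\ge 2$ with respect to $W_{m-1}$, so the first entry of $W_1$ is never touched.

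\emph{Step~2 (the index model for $n\ge2$).} I would apply $\sqrt{\cdot}$ entrywise and set $J_{n-1}:=\sqrt{W_{n-1}}$. This is the increasing sequence obtained from $\N$ by removing $\lambda_1,\ldots,\lambda_{n-1}$, so that $W_{n-1}(k)=J_{n-1}(k)^2$.

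By Lemma~\ref{lem:squares-pointer}, $h_n=\mu_n^2$. Therefore $\lambda_n=J_{n-1}(\mu_n^2)$, i.e.\ the rank of $\lambda_n$ inside $J_{n-1}$ equals $\mu_n^2$.

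\emph{Step~3 (counting).} By Lemma~\ref{lem:squares-mono}, the removed indices satisfy $\lambda_1<\cdots<\lambda_{n-1}<\lambda_n$. So exactly $n-1$ integers in $\{1,\ldots,\lambda_n\}$ are missing from $J_{n-1}$, and $\lambda_n$ itself is still present. Hence its rank in $J_{n-1}$ is $\lambda_n-(n-1)$. Equating this with $\mu_n^2$ gives \eqref{eq:squares-rank}.

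As a sanity check, for $n=2$ one gets $\mu_2=3$, so the formula predicts $\lambda_2=9+1=10$, which matches the listed data.

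\emph{Main obstacle.} The only delicate point is the justification of Lemma~\ref{lem:squares-pointer} in the presence of the degenerate first step. The bound $W_0(m)=m^2\ge m+1$ fails at $m=1$, so the prefix stabilization of Proposition~\ref{thm:pointer-survivor} must be rerun starting from $W_1$ rather than $W_0$.

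Concretely, $W_1(m)=(m+1)^2\ge m+1$ for all $m\ge1$. For $m\ge2$ this gives
\[
  h_m=W_{m-1}(m)\ge W_1(m)>m.
\]
By Lemma~\ref{lem:prefix-stability}, no step $m\ge n$ disturbs the first $n$ entries of $W_{n-1}$. Therefore $W_{n-1}(n)=s_n$ for $n\ge2$, and it is precisely this identification that feeds the rank computation in Step~2.
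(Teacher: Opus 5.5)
Your proposal is correct and follows the paper's proof: you pass to root-indices, take $h_n=\mu_n^2$ from Lemma~\ref{lem:squares-pointer}, and use the monotonicity of $(\lambda_n)$ from Lemma~\ref{lem:squares-mono} to see that the rank $\lambda_n-(n-1)$ of $\lambda_n$ in the working index sequence equals $\mu_n^2$. Your extra check of $\mu_1=2$ and your rerun of prefix stabilization from $W_1$ are sound but not strictly needed, since for $n\ge2$ only steps $m\ge2$ matter, and there $h_m\ge W_0(m)=m^2\ge m+1$ already holds.
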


\begin{proof}
For $n=1$, $h_1=W_0(1)=1=d_1$, giving $\lambda_1=1$ (degenerate step).
For $n\ge2$, by Lemma~\ref{lem:squares-pointer} the pointer is
$h_n=\mu_n^2$.  The target $d_n=W_{n-1}(h_n)$ sits at position
$h_n=\mu_n^2$ in the working index sequence~$I_{n-1}$.  Before
step~$n$, exactly $n-1$ indices have been deleted (one per step).
By Lemma~\ref{lem:squares-mono} all prior deletions satisfy
$\lambda_k<\lambda_n$, so all $n-1$ deleted indices lie below~$\lambda_n$.
Hence
\[
  \mu_n^2 = \lambda_n-(n-1),
\]
which gives~\eqref{eq:squares-rank}.
\end{proof}

The rank identity implies a two-gap property.

\begin{proposition}\label{prop:squares-twogap}
The normalized survivor gaps satisfy
$\mu_n-\mu_{n-1}\in\{1,2\}$ for every $n\ge2$.
\end{proposition}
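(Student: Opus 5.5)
The argument mirrors the proof of Proposition~\ref{thm:two-gap}: deletion indices are widely spaced, so consecutive survivor indices can skip at most one integer. First note that the root-indices $\{\mu_n\}$ and $\{\lambda_n\}$ partition $\N$. Indeed, the map $m^2\mapsto m$ is a bijection from $W_0$ onto $\N$, and $W_0$ is the disjoint union of survivors and deletions. Both sequences are strictly increasing: $(\mu_n)$ by definition, and $(\lambda_n)$ by Lemma~\ref{lem:squares-mono}. So $(\mu_n)$ is the increasing enumeration of $\N\setminus\{\lambda_k\}$.

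The key step is a lower bound on the deletion gaps. For $n\ge3$, subtracting the rank identity~\eqref{eq:squares-rank} at $n$ and $n-1$ gives
\[
  \lambda_n-\lambda_{n-1} = \mu_n^2-\mu_{n-1}^2+1 \ge (\mu_{n-1}+1)^2-\mu_{n-1}^2+1 = 2\mu_{n-1}+2 \ge 6,
\]
using $\mu_n\ge\mu_{n-1}+1$ and $\mu_{n-1}\ge\mu_1=2$. The boundary case $n=2$ is handled separately using $\lambda_1=1$ and $\lambda_2=\mu_2^2+1\ge 10$. In both cases no two deletion indices are consecutive integers.

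Now suppose $\mu_n-\mu_{n-1}\ge3$ for some $n\ge2$. Then the two consecutive integers $\mu_{n-1}+1$ and $\mu_{n-1}+2$ both lie strictly between consecutive survivors. Hence both are deletion indices, which contradicts the gap bound. Therefore $\mu_n-\mu_{n-1}\le 2$. Together with strict monotonicity, this gives $\mu_n-\mu_{n-1}\in\{1,2\}$.

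The only delicate point is the bookkeeping at the degenerate first step. Here $\lambda_1=1$ and $\mu_1=2$, so the survivor sequence does not start at $1$. One must check that the complementarity argument still applies at $n=2$. It does, since $\mu_2-\mu_1=3-2=1$ directly, and the first deleted index $1$ lies below $\mu_1$, so it does not fall between any two survivors.
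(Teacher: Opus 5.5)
Your proof is correct and follows the paper's argument. Like the paper, you difference the rank identity $\lambda_n=\mu_n^2+(n-1)$ to show that deletion root-indices are never consecutive, then use $\N=\{\mu_k\}\sqcup\{\lambda_k\}$ to cap survivor gaps at $2$; your separate check of the pair $(\lambda_1,\lambda_2)$, which the paper's bound (stated only for $n\ge3$) leaves implicit, is a welcome tightening.
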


\begin{proof}
From the rank identity~\eqref{eq:squares-rank},
\[
  \lambda_n-\lambda_{n-1}
  = \mu_n^2-\mu_{n-1}^2+1
  = (\mu_n-\mu_{n-1})(\mu_n+\mu_{n-1})+1
  \;\ge\; \mu_n+\mu_{n-1}+1
  \;\ge\; 5
\]
for $n\ge3$ (since $\mu_n\ge 3$).  In particular, no two deletions are
consecutive.  Since $\{\mu_k\}\sqcup\{\lambda_k\}=\N$ and consecutive
deletions never occur, at most one integer can be missing between
$\mu_{n-1}$ and $\mu_n$, giving gaps in~$\{1,2\}$.
\end{proof}

We can now identify the precise membership rule.

\begin{proposition}%
\label{thm:squares-selfref}
For the golden sieve on $\{n^2\}$, for every $n\ge2$:
\begin{equation}\label{eq:squares-selfref}
  \mu_{\mu_n^2-1} = \mu_n^2+n-2.
\end{equation}
Equivalently, with $\lambda_n=\mu_n^2+(n-1)$
(Proposition~\ref{thm:squares-rank}):
$\mu_{\lambda_n-n}=\lambda_n-1$ for $n\ge2$.
\end{proposition}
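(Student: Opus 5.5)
The plan mirrors the proof of Lemma~\ref{lem:selfref}. For $n\ge2$ the rank identity (Proposition~\ref{thm:squares-rank}) gives $\lambda_n=\mu_n^2+(n-1)$. The gap bound in the proof of Proposition~\ref{prop:squares-twogap} shows that consecutive deletions are far apart. From this we deduce that $\lambda_n-1$ is a survivor, count survivors below $\lambda_n$, and read off its index.

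\textbf{Step 1: $\lambda_n-1$ is a survivor for $n\ge2$.} We must check that $\lambda_n-1\notin\{\lambda_k\}$.
\begin{itemize}
\item For $k=n$ this is trivial.
\item For $k<n-1$ or $k=n-1\ge2$, monotonicity (Lemma~\ref{lem:squares-mono}) together with $\lambda_n-\lambda_{n-1}\ge5$ for $n\ge3$ gives $\lambda_k\le\lambda_{n-1}<\lambda_n-1$.
\item The remaining cases are $n=2$ (compare with $\lambda_1=1$) and $n=3$ (the gap $\lambda_3-\lambda_2$). These I would check directly: $\mu_2=3$, so $\lambda_2=10$, and $9\neq1$; also $\lambda_3=18\ne 11$. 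More robustly, for $n=2$ use $\lambda_2=\mu_2^2+1\ge10>2$.
\end{itemize}
Thus $\lambda_n-1\in\{\mu_k\}$, using the complementarity $\{\mu_k\}\sqcup\{\lambda_k\}=\N$.

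\textbf{Step 2: count survivors.} Among $1,\dots,\lambda_n$ exactly $n$ integers are deletions, namely $\lambda_1<\cdots<\lambda_n$ by Lemma~\ref{lem:squares-mono}. So there are $\lambda_n-n$ survivors that are $\le\lambda_n$. Since $\lambda_n$ itself is a deletion and $\lambda_n-1$ is a survivor, $\lambda_n-1$ is the largest survivor not exceeding $\lambda_n$. Hence $\mu_{\lambda_n-n}=\lambda_n-1$, which is the second form of the statement.

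\textbf{Step 3: substitute.} We have $\lambda_n-n=\mu_n^2-1$ and $\lambda_n-1=\mu_n^2+n-2$, which gives \eqref{eq:squares-selfref}.

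\textbf{Main obstacle.} The only delicate point is the bookkeeping around the degenerate first step. There $\lambda_1=1$ is a deletion while $\mu_1=2$, so the normalized index $1$ belongs to the deletion set. I must confirm that complementarity in $\N$ and the count of $n$ deletions up to $\lambda_n$ both still include $\lambda_1=1$. They do, since $1<\lambda_n$. The small cases $n=2,3$ are handled by the explicit initial values.
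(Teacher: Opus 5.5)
Your proof is correct and is essentially the paper's counting argument: exactly $n-1$ deletions lie below $\lambda_n$, so $S(\lambda_n-1)=\mu_n^2-1$, and because deletions are spread apart, $\lambda_n-1$ is a survivor and hence the $(\mu_n^2-1)$-th one. Your Step~1 states explicitly the fact the argument needs, namely that $\lambda_n-1$ (not $\lambda_n+1$, which is what the paper's write-up discusses) is a survivor; treating $n=2$ separately is right, since the cited bound $\lambda_n-\lambda_{n-1}\ge5$ is only stated for $n\ge3$, and the case $k>n$ that you omit is trivial by monotonicity.
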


\begin{proof}
The partition $\N=\{\mu_k\}\sqcup\{\lambda_k\}$ gives, for the counting
functions $S(x)=\#\{k:\mu_k\le x\}$ and $D(x)=\#\{k:\lambda_k\le x\}$,
the relation $S(x)+D(x)=x$ for all $x\ge1$.

Fix $n\ge2$ and set $x=\lambda_n-1=\mu_n^2+n-2$.  Since $(\lambda_k)$ is
strictly increasing (Lemma~\ref{lem:squares-mono}), $D(\lambda_n-1)=n-1$, hence
\[
  S(\lambda_n-1)=(\lambda_n-1)-(n-1)=\mu_n^2-1.
\]
This means $\mu_{\mu_n^2-1}\le \lambda_n-1$ and
$\mu_{\mu_n^2}\ge \lambda_n$.
Since $\lambda_n-\lambda_{n-1}\ge 5$ (as shown in
Proposition~\ref{prop:squares-twogap}), the integer $\lambda_n+1$
is not a deletion, hence $\lambda_n+1\in\{\mu_k\}$.
Together with $\mu_{\mu_n^2}\ge\lambda_n$ and
$\lambda_n\notin\{\mu_k\}$, this gives
\[
  \mu_{\mu_n^2}=\lambda_n+1.
\]
The counting identity $S(\lambda_n-1)=\mu_n^2-1$ then forces
$\mu_{\mu_n^2-1}=\lambda_n-1=\mu_n^2+n-2$.
\end{proof}

\begin{corollary}\label{cor:metahiccup}
For $n\ge2$:
\[
  \mu_n-\mu_{n-1}=2
  \quad\Longleftrightarrow\quad
  \mu_{n-1}+1\in\{\lambda_1,\lambda_2,\ldots\}.
\]
\end{corollary}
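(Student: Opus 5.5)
The corollary is essentially a restatement of the two-gap property (Proposition~\ref{prop:squares-twogap}) combined with the complementarity $\N=\{\mu_k\}\sqcup\{\lambda_k\}$. I will argue directly from these two facts. The rank identity and the self-referential identity are not needed except through the two-gap bound.

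Fix $n\ge2$. By Proposition~\ref{prop:squares-twogap}, $\mu_n-\mu_{n-1}\in\{1,2\}$, so the only possible integer strictly between $\mu_{n-1}$ and $\mu_n$ is $\mu_{n-1}+1$.

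For the forward implication, suppose $\mu_n-\mu_{n-1}=2$. Then $\mu_{n-1}+1$ lies strictly between two consecutive survivors, so it is not a survivor. By complementarity it belongs to $\{\lambda_k\}$.

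For the converse, suppose $\mu_{n-1}+1=\lambda_j$ for some $j$. Then $\mu_{n-1}+1$ is not a survivor, so $\mu_n\ne\mu_{n-1}+1$. Since $\mu_n>\mu_{n-1}$, this forces $\mu_n\ge\mu_{n-1}+2$, and the two-gap property gives equality. No further input is needed.

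The only point requiring care is the validity range. Proposition~\ref{prop:squares-twogap} is stated for all $n\ge2$, but its proof of non-consecutive deletions uses $\mu_n\ge3$, that is, $n\ge3$. So I will check $n=2$ by hand from the listed initial terms: $\mu_1=2$, $\mu_2=3$, $\lambda_1=1$, $\lambda_2=10$. Here the gap is $1$ and $\mu_1+1=3\notin\{\lambda_k\}$, so both sides of the equivalence are false and it holds. I will also note that the equivalence uses only $\mu_n-\mu_{n-1}\le2$, which holds for every $n\ge2$, so no case is actually lost. I expect this bookkeeping at small $n$ to be the only (minor) obstacle.
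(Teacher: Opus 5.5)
Your proof is correct and matches the paper's argument: the paper's one-line proof is exactly this complementarity observation, with the two-gap bound used implicitly for the converse, and you simply make that dependence explicit. Your hand check at $n=2$ is harmless but not strictly needed. The $n\ge3$ restriction in the two-gap proof comes from applying the rank identity at index $n-1\ge2$, not from $\mu_n\ge3$, which already holds at $n=2$. In any case $\lambda_2-\lambda_1=9$, so $\mu_n-\mu_{n-1}\le2$ holds for all $n\ge2$.
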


\begin{proof}
The gap $\mu_n-\mu_{n-1}=2$ precisely when the integer
$\mu_{n-1}+1$ is missing from $\{\mu_k\}$, i.e.\ belongs to
its complement $\{\lambda_k\}$.
\end{proof}

\begin{theorem}%
\label{thm:squares-hiccup-surv}
Let $(\mu_n)_{n\ge1}$ be the survivor root-indices for $W_0=\{n^2\}$, and
define the \emph{square-shadow set}
\[
  \mathcal{M} := \{\mu_m^2 : m\ge2\}\subseteq\N.
\]
Then $\mu_1=2$ and for every $n\ge2$,
\begin{equation}\label{eq:squares-hiccup-surv}
  \mu_n-\mu_{n-1}=
  \begin{cases}
    2 & \text{if } n\in \mathcal{M},\\
    1 & \text{if } n\notin \mathcal{M}.
  \end{cases}
\end{equation}
Equivalently, with $H(n):=(\mu_n-\mu_{n-1})-1\in\{0,1\}$, one has
$H(n)=\mathbf{1}_{\,n\in\mathcal{M}}$ for $n\ge2$.
\end{theorem}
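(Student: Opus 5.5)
The plan is to mirror the argument of Theorem~\ref{thm:hiccup-ab}(i). I would reduce the gap condition to a membership statement about the deletion root-indices, and then convert that membership into the condition $n\in\mathcal{M}$ using the rank identity~\eqref{eq:squares-rank} and the self-referential identity~\eqref{eq:squares-selfref}. By Proposition~\ref{prop:squares-twogap} the gaps $\mu_n-\mu_{n-1}$ lie in $\{1,2\}$. By Corollary~\ref{cor:metahiccup}, the gap equals $2$ exactly when $\mu_{n-1}+1\in\{\lambda_k\}$. So it suffices to show, for $n\ge2$,
\[
  \mu_{n-1}+1\in\{\lambda_k\}\iff n\in\mathcal{M}.
\]

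For the forward direction, suppose $\mu_{n-1}+1=\lambda_j$. I would first rule out $j=1$: we have $\lambda_1=1$, while $\mu_{n-1}+1\ge\mu_1+1=3$. Next I count with the functions $S$ and $D$ from the proof of Proposition~\ref{thm:squares-selfref}, using $S(x)+D(x)=x$. Since $\mu_{n-1}$ is the $(n-1)$-th survivor and $\mu_{n-1}+1$ is not a survivor, $S(\mu_{n-1}+1)=n-1$. Since $(\lambda_k)$ is increasing (Lemma~\ref{lem:squares-mono}), $D(\lambda_j)=j$. Hence $\mu_{n-1}+1=(n-1)+j$, that is, $j=\mu_{n-1}-n+2$. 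Now $j\ge2$, so~\eqref{eq:squares-rank} gives $\lambda_j=\mu_j^2+j-1$. Substituting this into $\mu_{n-1}+1=\lambda_j$ and cancelling $\mu_{n-1}$ should leave $n=\mu_j^2$, so $n\in\mathcal{M}$.

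For the converse, let $n=\mu_m^2$ with $m\ge2$. I apply~\eqref{eq:squares-selfref} at index $m$: it gives $\mu_{n-1}=\mu_{\mu_m^2-1}=\mu_m^2+m-2=\lambda_m-1$. Hence $\mu_{n-1}+1=\lambda_m$ is a deletion, and the gap is $2$. The statement $\mu_1=2$ is already part of Proposition~\ref{thm:squares-rank}. The formulation $H(n)=\mathbf{1}_{n\in\mathcal{M}}$ is just a restatement of the dichotomy.

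I expect the main obstacle to be the bookkeeping at small indices. Two points need care. First, the identification $j=\mu_{n-1}-n+2$ requires $j\ge2$, so that the nondegenerate rank identity~\eqref{eq:squares-rank} applies rather than the degenerate value $\lambda_1=1$. Second, the case $n=2$ must not be silently excluded: there $\mu_2-\mu_1=1$ and $2\notin\mathcal{M}$, consistent with the rule. The exclusion of $m=1$ from $\mathcal{M}$ reflects exactly the degenerate first step. Beyond this, the counting step is routine.
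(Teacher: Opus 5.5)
Your proof is correct and follows essentially the paper's route. You reduce via Corollary~\ref{cor:metahiccup}, then convert ``$\mu_{n-1}+1\in\{\lambda_k\}$'' into ``$n\in\mathcal{M}$'' using \eqref{eq:squares-rank} and \eqref{eq:squares-selfref}, and you exclude $m=1$ because $\lambda_1=1$.

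The only difference is in the forward direction. The paper gets it from \eqref{eq:squares-selfref} together with strict monotonicity of $(\mu_n)$, so equal values force equal indices. You instead recover the index $j=\mu_{n-1}-n+2$ from the counting identity $S(x)+D(x)=x$, as in Theorem~\ref{thm:hiccup-ab}. That is the same counting that underlies \eqref{eq:squares-selfref}, so both routes rest on the same ingredients.
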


\begin{proof}
By Corollary~\ref{cor:metahiccup},
$\mu_n-\mu_{n-1}=2$ iff $\mu_{n-1}+1\in\{\lambda_m\}$.

Fix $n\ge2$.  The condition $\mu_{n-1}+1=\lambda_m$ is equivalent to
$\mu_{n-1}=\lambda_m-1$.  By Proposition~\ref{thm:squares-selfref}, for every
$m\ge2$ we have $\lambda_m-1=\mu_{\mu_m^2-1}$.  Hence
\[
  \mu_{n-1}=\lambda_m-1
  \quad\Longleftrightarrow\quad
  \mu_{n-1}=\mu_{\mu_m^2-1}.
\]
Since $(\mu_n)$ is strictly increasing, equality of values forces equality
of indices: $n-1=\mu_m^2-1$, i.e.\ $n=\mu_m^2$.

It remains to check $m=1$.  Here $\lambda_1=1$ (the degenerate step), so the
condition $\mu_{n-1}+1=\lambda_1=1$ would require $\mu_{n-1}=0$, which
never occurs.  Hence $m=1$ contributes no gap-$2$ position, and
$\mathcal{M}=\{\mu_m^2:m\ge2\}$.
\end{proof}

\begin{remark}
Equation~\eqref{eq:squares-hiccup-surv} is a hiccup-type rule in which the
gap decision at index~$n$ is governed by membership of~$n$ in a set built from
the survivor sequence itself, namely its square shadow~$\mathcal{M}$.  This is
the quadratic analogue of the linear hiccup (Theorem~\ref{thm:hiccup-ab}),
with the nesting provided by the squaring map $m\mapsto\mu_m^2$ rather than
a simple linear recurrence.
\end{remark}

The nested hiccup (Theorem~\ref{thm:squares-hiccup-surv}) translates
into a counting identity that pins down the growth of~$(\mu_n)$.

\begin{lemma}\label{lem:counting-squares}
Let $S(x)=\#\{k:\mu_k\le x\}$ and $D(x)=\#\{k:\lambda_k\le x\}$ be
the survivor and deletion counting functions, so that $S(x)+D(x)=\lfloor x\rfloor$.
Then for every $n\ge2$,
\begin{equation}\label{eq:mu-counting}
  \mu_n = n + S(\sqrt{n}).
\end{equation}
\end{lemma}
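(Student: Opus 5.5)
The plan is to telescope the nested hiccup rule of Theorem~\ref{thm:squares-hiccup-surv} and then turn the number of ``gap-$2$'' indices up to~$n$ into a value of the counting function~$S$.

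First, start from $\mu_1=2$ and write $\mu_k-\mu_{k-1}=1+\mathbf{1}_{k\in\mathcal{M}}$ for $k\ge2$. Summing over $2\le k\le n$ gives
\[
  \mu_n = 2+(n-1)+\#\bigl(\mathcal{M}\cap[2,n]\bigr).
\]

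Next, identify the count. The sequence $(\mu_m)$ is strictly increasing, so the squares $\mu_m^2$ with $m\ge2$ are pairwise distinct. Each of them is at least $\mu_2^2=9\ge2$. Hence
\[
  \#\bigl(\mathcal{M}\cap[2,n]\bigr)=\#\{m\ge2:\mu_m^2\le n\}=\#\{m\ge2:\mu_m\le\sqrt n\}.
\]
Since $\mu_1=2$, the right-hand side equals $S(\sqrt n)-1$ as soon as $\sqrt n\ge2$, i.e.\ for $n\ge4$. Substituting gives $\mu_n=n+1+S(\sqrt n)-1=n+S(\sqrt n)$, which is \eqref{eq:mu-counting}.

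The step needing care is the bookkeeping at small~$n$. The term $m=1$ is excluded from~$\mathcal{M}$ because of the degenerate first deletion. The ``$-1$'' from $\mu_1=2$ is available only once $\sqrt n\ge\mu_1$. For $n=2,3$ we have $S(\sqrt n)=0$, while the data give $\mu_2=3$ and $\mu_3=4$. So the literal identity reads $\mu_n=n+1$ there, and the formula as stated is off by one at $n=2,3$.

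I would therefore state the lemma for $n\ge4$ and record $\mu_2=3$, $\mu_3=4$ separately. An equivalent fix is to use the uniform form $\mu_n=n+1+\#\{m\ge2:\mu_m\le\sqrt n\}$, valid for all $n\ge2$. I would check the first listed terms against this form, e.g.\ $\mu_9=9+S(3)=9+2=11$, which matches the table. The rest of the argument is routine once Theorem~\ref{thm:squares-hiccup-surv} and the strict monotonicity of $(\mu_n)$ are available.
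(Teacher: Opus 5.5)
Your argument is the same telescoping of Theorem~\ref{thm:squares-hiccup-surv} that the paper uses, and your correction is right. The paper's step $\#\{m\ge2:\mu_m^2\le n\}=S(\sqrt n)-1$ silently requires $\sqrt n\ge\mu_1=2$, so the identity as stated fails at $n=2,3$ (where $S(\sqrt n)=0$ but $\mu_2=3$, $\mu_3=4$) and holds exactly for $n\ge4$. This off-by-one is harmless for the later use in Lemma~\ref{lem:S-renorm-rigorous}, which only needs the formula up to $O(1)$.
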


\begin{proof}
By Theorem~\ref{thm:squares-hiccup-surv},
$\mu_n-\mu_{n-1}=1+\mathbf{1}_{n\in\mathcal{M}}$ for $n\ge2$,
with $\mathcal{M}=\{\mu_m^2:m\ge2\}$.  Summing from $2$ to~$n$
and using $\mu_1=2$:
\[
  \mu_n = 2+(n-1)+\#\{k\in[2,n]:k\in\mathcal{M}\}
        = n+1+\#\{m\ge2:\mu_m^2\le n\}.
\]
Now $\#\{m\ge2:\mu_m^2\le n\}=S(\sqrt{n})-1$, since $\mu_1=2$
and $\mu_m\le\sqrt{n}$ iff $\mu_m^2\le n$.
Hence $\mu_n = n+S(\sqrt{n})$.
\end{proof}

\begin{lemma}\label{lem:S-renorm-rigorous}
For $m\ge4$,
\[
  S(m) = m - S\!\bigl(\lfloor\sqrt{m}\rfloor\bigr) + O(1).
\]
\end{lemma}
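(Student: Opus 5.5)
Since $\mu_k\le x$ together with $S(x)+D(x)=\lfloor x\rfloor$ gives $S(m)=m-D(m)$, the task reduces to showing $D(m)=S(\lfloor\sqrt m\rfloor)+O(1)$: the number of deletion root-indices $\lambda_k\le m$ must be compared with the number of survivor root-indices up to $\sqrt m$. The natural tool is the rank identity of Proposition~\ref{thm:squares-rank}, $\lambda_k=\mu_k^2+(k-1)$ for $k\ge2$, which ties each deletion to the square of the survivor with the same index.

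The first step is to write
\[
D(m)=\#\{k\ge1:\lambda_k\le m\}=1+\#\{k\ge2:\mu_k^2+k-1\le m\},
\]
where the $1$ comes from the degenerate deletion $\lambda_1=1$. Next I would compare this with $\#\{k\ge2:\mu_k^2\le m\}=S(\lfloor\sqrt m\rfloor)-1$, the count already used in Lemma~\ref{lem:counting-squares}. Because $\mu_k^2+k-1\ge\mu_k^2$, the first count is at most the second. The difference counts the indices $k$ for which $m-k+1<\mu_k^2\le m$.

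The main obstacle is to bound this difference by $O(1)$. Let $K=S(\lfloor\sqrt m\rfloor)$, which is the largest index with $\mu_K^2\le m$. The indices in question are the $k\le K$ that satisfy $\mu_k^2>m-k+1$. As $k$ decreases below $K$, $\mu_k^2$ decreases by at least $\mu_k+\mu_{k-1}\ge 2k$ per step, because $\mu_k\ge k+1$ from $\mu_1=2$ and the gaps are at least $1$. Meanwhile the threshold $m-k+1$ increases by only $1$ per step. So if $k_0$ is the largest index with $\mu_{k_0}^2\le m-k_0+1$, all $k\le k_0$ fail the condition (both sides are monotone in the right directions). The bad set is therefore the interval $(k_0,K]$.

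To show $K-k_0=O(1)$, I would argue that $\mu_K^2\le m<\mu_{K+1}^2$ and that $\mu_{K-1}^2\le \mu_K^2-2K+1\le m-2K+1\le m-(K-1)+1$ once $K\ge1$. Hence $k_0\ge K-1$ and the difference is at most $1$. The care needed here is only to check the inequality $\mu_k^2-\mu_{k-1}^2\ge 2k-1$ and the small-$m$ range, which is why the statement is restricted to $m\ge4$, where $K\ge2$ and the index-$1$ degeneracy does not interfere. Combining these, $D(m)=S(\lfloor\sqrt m\rfloor)+O(1)$, with the constant in $\{-1,0,1\}$ up to the $\lambda_1$ correction, and $S(m)=m-S(\lfloor\sqrt m\rfloor)+O(1)$ follows.
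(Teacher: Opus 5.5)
Your argument is correct, and it takes a more direct route than the paper.

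\textbf{The paper's route.} It sets $n:=S(m)$ and inverts the counting formula $\mu_n=n+S(\sqrt n)$ of Lemma~\ref{lem:counting-squares}. That formula rests on the hiccup rule of Theorem~\ref{thm:squares-hiccup-surv}, and hence on the self-referential identity of Proposition~\ref{thm:squares-selfref}. Inverting gives $n=m-S(\sqrt n)+O(1)$. The paper then needs a separate a priori bound $D(m)\le\sqrt m+O(1)$, obtained from $\lambda_j\ge j^2$. This shows $\sqrt n=\sqrt m+O(1)$, and since $S$ changes by at most $1$ per unit, $S(\sqrt n)$ can be replaced by $S(\lfloor\sqrt m\rfloor)$.

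\textbf{Your route.} You use only the partition identity $S+D=\lfloor x\rfloor$ and the rank identity $\lambda_k=\mu_k^2+k-1$, and you count $D(m)$ directly. Two facts do the work: $k\mapsto\mu_k^2+k-1$ is increasing, and $\mu_K^2-\mu_{K-1}^2\ge\mu_K+\mu_{K-1}\ge 2K+1$. Together they show that at most the single index $k=K$ is lost.

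\textbf{What each buys.} You get the exact statement $S(m)-m+S(\lfloor\sqrt m\rfloor)\in\{0,1\}$; for instance the value is $1$ at $m=9,16,25$ and $0$ at $m=4,10,28$. This is sharper than the paper's unspecified $O(1)$ and avoids the hiccup machinery entirely. The paper's route has the merit of presenting the lemma as an inversion of the renormalization $\mu_n=n+S(\sqrt n)$, the structural viewpoint behind Theorem~\ref{thm:squares-asymp}. For the lemma itself, yours is shorter and self-contained.

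\textbf{One small inaccuracy.} The condition $m\ge4$ does not give $K\ge2$: for $4\le m\le 8$ one has $K=S(2)=1$. Its actual role is to guarantee $\mu_1=2\le\sqrt m$, so that $\#\{k\ge2:\mu_k^2\le m\}=S(\lfloor\sqrt m\rfloor)-1$. When $K=1$ your bad set, which is restricted to $k\ge2$, is empty anyway, so the conclusion is unaffected.
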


\begin{proof}
Let $n:=S(m)$, so $\mu_n\le m<\mu_{n+1}$.
By Lemma~\ref{lem:counting-squares}, $\mu_k = k + S(\sqrt{k})$, hence
$n + S(\sqrt{n}) \le m < n+1+S(\sqrt{n+1})$, giving
$n = m - S(\sqrt{n}) + O(1)$.

Since $\lambda_j\ge j^2$ (as $\lambda_j=\mu_j^2+(j-1)\ge j^2$), we have
$D(m)\le\sqrt{m}+O(1)$, so $n=S(m)=m-D(m)=m+O(\sqrt{m})$ and
$\sqrt{n}=\sqrt{m}+O(1)$.  Since $S$ is nondecreasing and changes by at most~$1$
per unit, $S(\sqrt{n})=S(\lfloor\sqrt{m}\rfloor)+O(1)$, and the claim follows.
\end{proof}

\begin{theorem}\label{thm:squares-asymp}
Define the \emph{alternating root tower}
\begin{equation}\label{eq:root-tower}
  f(m) := \sum_{j=0}^{K(m)} (-1)^j\, \lfloor m^{1/2^j}\rfloor,
\end{equation}
where $K(m)=\max\{k\ge0:m^{1/2^k}\ge2\}=O(\log\log m)$.
Then
\[
  S(m) = f(m) + O(\log\log m).
\]
Consequently, for $n\ge4$,
\begin{equation}\label{eq:mu-tower}
  \mu_n = n + f(\lfloor\sqrt{n}\rfloor) + O(\log\log n).
\end{equation}
In particular,
\[
  \mu_n = n + n^{1/2} - n^{1/4} + n^{1/8} - \cdots + O(\log\log n).
\]
\end{theorem}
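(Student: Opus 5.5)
The plan is to iterate the renormalization identity of Lemma~\ref{lem:S-renorm-rigorous}. Write $m_0:=m$ and $m_{j+1}:=\lfloor\sqrt{m_j}\rfloor$. A short induction gives $m_j=\lfloor m^{1/2^j}\rfloor$. This is the standard fact $\lfloor\sqrt{\lfloor x\rfloor}\rfloor=\lfloor\sqrt{x}\rfloor$, applied with $x=m^{1/2^{j}}$ squared appropriately. Applying Lemma~\ref{lem:S-renorm-rigorous} at each level $m_j\ge4$ gives
\[
  S(m_j)=m_j-S(m_{j+1})+O(1),
\]
with an absolute implied constant $C$. Unrolling $k$ times yields
\[
  S(m)=\sum_{j=0}^{k-1}(-1)^j m_j+(-1)^kS(m_k)+E,
  \qquad |E|\le Ck.
\]
I stop the recursion at the first $k$ with $m_k<4$, where $S(m_k)=O(1)$ trivially since $S(x)\le x$. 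The number of steps is $k\le K(m)+1$. Indeed $m^{1/2^j}\ge4$ forces $m^{1/2^{j+1}}\ge2$, so the levels where the lemma applies lie within $0,\dots,K(m)$. Hence $k=O(\log\log m)$, and the accumulated error is $O(\log\log m)$.

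Next I reconcile the truncation with the definition of $f$. The sum in $f(m)$ runs up to $K(m)$, while the unrolled sum stops at $k-1$. The surplus terms have indices $j\in[k,K(m)]$. Each of them satisfies $2\le m_j<4$, so each is bounded by $3$, and there are at most two of them, since $m_j<4$ forces $m_{j+2}<2$. Their contribution, together with the leftover $S(m_k)$, is therefore $O(1)$. This proves $S(m)=f(m)+O(\log\log m)$. The small range $m<4$ is absorbed into the constant.

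For \eqref{eq:mu-tower}, I combine Lemma~\ref{lem:counting-squares}, which gives $\mu_n=n+S(\sqrt n)$, with $S(\sqrt n)=S(\lfloor\sqrt n\rfloor)$; these agree because $S$ only counts integers. Then I apply the first part at $\lfloor\sqrt n\rfloor$, noting that $\log\log\sqrt n\le\log\log n$. The displayed expansion follows by replacing $\lfloor n^{1/2^j}\rfloor$ by $n^{1/2^j}$. This costs $O(1)$ per term, hence $O(\log\log n)$ in total, and the sign pattern matches $f(\lfloor\sqrt n\rfloor)$ expanded in powers of $n$.

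The main obstacle is uniformity of the $O(1)$ in Lemma~\ref{lem:S-renorm-rigorous}. If its constant were not absolute, the errors would not sum to $O(k)$. I would first re-read that proof to confirm the constant is independent of $m$. The inputs are the bound $S(\sqrt{n+1})-S(\sqrt n)\le1$ and the estimate $\sqrt n=\sqrt m+O(1)$ coming from $D(m)\le\sqrt m+O(1)$, and both look uniform. If needed, I would redo that estimate with explicit constants. The remaining bookkeeping at the bottom levels is routine.
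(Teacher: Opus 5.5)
Your proposal is correct and follows the paper's proof. Like the paper, you iterate Lemma~\ref{lem:S-renorm-rigorous} along $m_{j+1}=\lfloor\sqrt{m_j}\rfloor$, collect an $O(1)$ error per level over $O(\log\log m)$ levels, and then substitute into Lemma~\ref{lem:counting-squares}. Your handling of the bottom of the recursion is more careful than the paper's. You stop at the first $m_k<4$, which in fact gives $k=K(m)$ exactly, so only one surplus term remains. You also use $m_j=\lfloor m^{1/2^j}\rfloor$ exactly. The paper instead applies the lemma at level $K(m)$, where it is formally outside the stated range $m\ge4$. The uniformity of the implied constant that you flagged does hold, since both inputs in that lemma's proof are uniform in $m$.
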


\begin{proof}
By Lemma~\ref{lem:S-renorm-rigorous}, $S(m) = m - S(\lfloor\sqrt{m}\rfloor)+O(1)$.
Set $t_0:=m$ and $t_{j+1}:=\lfloor\sqrt{t_j}\rfloor$ for $j\ge0$.  Iterating,
\[
  S(m) = \sum_{j=0}^{J}(-1)^j\,t_j + (-1)^{J+1}S(t_{J+1}) + O(J)
\]
for every $J\ge0$.  Choose $J=K(m)$; then $t_{K(m)+1}<2$, so
$S(t_{K(m)+1})=O(1)$.  Since each iteration contributes an error $O(1)$ and
there are $K(m)=O(\log\log m)$ iterations, the accumulated error is
$O(\log\log m)$.  Since $t_j=\lfloor m^{1/2^j}\rfloor+O(1)$, we obtain
$S(m) = f(m)+O(\log\log m)$.

The formula for $\mu_n$ follows from Lemma~\ref{lem:counting-squares}:
$\mu_n = n + S(\lfloor\sqrt{n}\rfloor) = n + f(\lfloor\sqrt{n}\rfloor) + O(\log\log n)$.
\end{proof}

\begin{openproblem}\label{op:squares}
Determine whether the binary gap word~$H$ of the golden sieve
on~$\{n^2\}$ is morphic or automatic after suitable normalization.
\end{openproblem}

\section{Extraction sieves and hiccup sequences}\label{sec:metallic}

The golden sieve is \emph{positional}: at each step it reads a position
in the working sequence and deletes the element found there, while all
other elements remain.  We now introduce a complementary family of sieves
based on a different principle: at each step, the \emph{minimum} of the
current working sequence is extracted as the next survivor, and a fixed
number of further minima are then discarded according to a self-referential
membership test.  On~$\N$, this extraction mechanism directly realizes the
hiccup recurrence.  On an arithmetic progression~$a\N+b$, the output is
again a hiccup sequence, but with parameters transformed by an explicit
affine map (Theorem~\ref{thm:extr-hiccup}, Proposition~\ref{prop:affine-action}).
In particular, the Bosma--Dekking--Steiner
sequence~\cite{BDS2018} is simply the extraction sieve
$\mathcal{C}_{1,3,2}$ on~$\N$ (Example~\ref{ex:silver-N}).

\subsection{The extraction sieve}\label{sec:extr-def}

\begin{definition}[Extraction sieve]\label{def:extraction}
Let $y,z\in\Z_{>0}$ with $y\neq z$, and $j\in\Z_{\ge0}$.
Set $m=\min(y,z)-1$ and $d=|y-z|$.
Let $W_0=(w_1<w_2<\cdots)$ be an infinite strictly increasing
sequence of positive integers.
The \emph{extraction sieve} $\mathcal{C}_{j,y,z}$ builds a sequence
$(s_n)_{n\ge1}$ as follows.
At step $n\ge1$:
\begin{enumerate}
\item[(i)] set $s_n:=\min W_{n-1}$ and remove it;
\item[(ii)] delete the next $m$ minima of $W$ (always);
\item[(iii)] let $t=n+1-j$; if $y>z$ and $t\in\{s_1,\ldots,s_n\}$,
  or $y<z$ and $t\notin\{s_1,\ldots,s_n\}$: delete $d$ additional
  minima.
\end{enumerate}
The integer~$j$ is the \emph{shift parameter}: the membership test
probes $n+1-j$, so $j=0$ tests the ``next'' index, $j=1$ tests
the current index, and $j=2$ tests the preceding index.
\end{definition}

The gap at step~$n$ is $s_{n+1}-s_n=(\text{number of elements
consumed})\times(\text{step of } W)$.  On a general working set the
analysis requires understanding how $W_n$ evolves, but when $W_0$ is
an arithmetic progression the structure simplifies completely.

The key structural observation is that the arithmetic progression structure is preserved at every step.

\begin{lemma}\label{lem:interval}
If $W_0=a\N+b=\{a+b,2a+b,3a+b,\ldots\}$ for some $a\ge1$, $0\le b<a$,
then at every step $W_{n-1}=\{s_n,\,s_n+a,\,s_n+2a,\ldots\}$:
the working sequence remains the tail of the same arithmetic
progression.
\end{lemma}

\begin{proof}
Induction on~$n$.  At step~$n$, the sieve removes $s_n$ together with
$m+(0\text{ or }d)$ consecutive successors from the front
of the progression.  The remainder is again a tail of the same progression with
common difference~$a$.
\end{proof}

\subsection{Equivalence with hiccup sequences}

\begin{theorem}\label{thm:extr-hiccup}
\leavevmode
\begin{enumerate}
\item[\textup{(a)}]
  On $W_0=\N$, the extraction sieve $\mathcal{C}_{j,y,z}$ produces
  the $(j,1,y,z)$-hiccup sequence: $s_1=1$ and
  \begin{equation}\label{eq:extr-hiccup}
    s_{n+1}-s_n=
    \begin{cases}
      y & \text{if $n+1-j\in\{s_1,\ldots,s_n\}$,}\\
      z & \text{otherwise.}
    \end{cases}
  \end{equation}
\item[\textup{(b)}]
  On $W_0=a\N+b$ with $a\ge1$, $0\le b<a$:
  \begin{equation}\label{eq:extr-affine}
    \mathcal{C}_{j,y,z}(a\N+b)\;=\;\operatorname{hiccup}(j,\;a+b,\;ay,\;az).
  \end{equation}
  That is, the survivors form the $(j,\,a+b,\,ay,\,az)$-hiccup
  sequence: $s_1=a+b$ and
  $s_{n+1}-s_n\in\{ay,\,az\}$ with the large gap $a\max(y,z)$
  occurring when the membership condition is met.
\end{enumerate}
\end{theorem}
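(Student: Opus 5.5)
We prove (b) directly and obtain (a) as the special case $(a,b)=(1,0)$, for which $a+b=1$, $ay=y$ and $az=z$. The tool is Lemma~\ref{lem:interval}. Since every $W_{n-1}$ is the tail $\{s_n,s_n+a,s_n+2a,\ldots\}$ of the same progression, the gap $s_{n+1}-s_n$ is $a$ times the number of elements consumed at step~$n$. The whole argument therefore reduces to counting that number.

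First we settle the initial term. Since $W_0=\{a+b,2a+b,\ldots\}$, step~1(i) gives $s_1=\min W_0=a+b$, which is the required initial value $x=a+b$.

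Next we count consumption at step~$n$ using Definition~\ref{def:extraction}. Step (i) removes $s_n$ and step (ii) removes $m=\min(y,z)-1$ further minima, so $\min(y,z)$ elements are always taken. Step (iii) removes $d=|y-z|$ more exactly when its test fires. The new minimum is then $s_{n+1}=s_n+a\cdot(\text{total consumed})$, so
\[
  s_{n+1}-s_n\in\{a\min(y,z),\;a\max(y,z)\}.
\]
We now match the two cases to $y$ and $z$, with $t=n+1-j$.

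If $y>z$, the extra $d$ elements are removed iff $t\in\{s_1,\ldots,s_n\}$. The gap is then $a\max(y,z)=ay$ on membership and $az$ otherwise.

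If $y<z$, the extra elements are removed iff $t\notin\{s_1,\ldots,s_n\}$. The gap is again $ay=a\min(y,z)$ on membership and $az$ otherwise.

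Reindexing $n+1\mapsto n$, this is exactly \eqref{eq:hiccup-def} with parameters $(j,a+b,ay,az)$ for $n\ge2$: the gap $s_n-s_{n-1}$ is $ay$ iff $n-j\in\{s_1,\ldots,s_{n-1}\}$, and $az$ otherwise. Setting $a=1$, $b=0$ gives (a) in the form \eqref{eq:extr-hiccup}.

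The main obstacle is bookkeeping rather than substance, and it has two parts.

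First, we must check that the membership test in the sieve refers to the same set as in the hiccup definition. The set $\{s_1,\ldots,s_n\}$ probed at step~$n$ is the set of values produced so far, and at the time of the test $s_{n+1}$ has not yet been produced. This is precisely the prefix $\{x_1,\ldots,x_{n}\}$ that the definition uses to determine $x_{n+1}$, so the two conditions agree with matching indices. In the original-scale hiccup for $a\N+b$, the test is on membership of $n-j$ among the values $s_k$ themselves, not their normalized indices, and the sieve's test compares $t$ to the values $s_k$ as well, so this too is consistent.

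Second, we must note that both sides are defined by the same deterministic recursion from the same initial value. Therefore agreement of the gap rule at every step, by induction on~$n$, gives equality of the two sequences.
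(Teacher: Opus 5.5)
Your proof is correct and follows essentially the paper's route: Lemma~\ref{lem:interval} shows the gap $s_{n+1}-s_n$ is $a$ times the number of elements consumed at step~$n$, and that number is $\min(y,z)$ or $\max(y,z)$ according to test (iii). The differences are only cosmetic. You obtain (a) as the case $(a,b)=(1,0)$ of (b) rather than the reverse, and you spell out the $y<z$ case (extra deletion on non-membership, so gap $z$ off the set and $y$ on it), which the paper's proof leaves implicit.
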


\begin{proof}
(a) By Lemma~\ref{lem:interval} with $a=1$, $b=0$:
$W_{n-1}=\{s_n,s_n{+}1,s_n{+}2,\ldots\}$.
Step~$n$ removes $s_n$ plus $m+(0\text{ or }d)$ further elements.
If the condition in~(iii) holds, a total of $1+m+d=\max(y,z)$
elements are consumed, so $s_{n+1}=s_n+\max(y,z)$.
Otherwise, $1+m=\min(y,z)$ elements are consumed, giving
$s_{n+1}=s_n+\min(y,z)$.
Since $s_1=\min\N=1$ and the membership test is
``$n+1-j\in\{s_i\}$'', this is precisely the
$(j,1,y,z)$-hiccup recurrence.

\smallskip
(b) By Lemma~\ref{lem:interval}, $W_{n-1}$ is a tail of the
progression $a\N+b$, and each removed element advances the minimum
by~$a$.  Therefore step~$n$ advances by
$a\cdot\min(y,z)$ or $a\cdot\max(y,z)$
according to the same membership condition as in~(a).
The initial value is $s_1=\min(a\N+b)=a+b$.
The condition ``$n+1-j\in\{s_i\}$'' is unchanged (it tests step
indices against survivor \emph{values}).
Hence the survivors satisfy the $(j,\,a+b,\,ay,\,az)$-hiccup
recurrence.
\end{proof}

\begin{proposition}\label{prop:affine-action}
Applying the extraction sieve to the arithmetic progression
$a\N+b$ defines a map on the space of hiccup parameters:
\begin{equation}\label{eq:affine-action}
  T_{a,b}:\;(j,\,x,\,y,\,z)\;\longmapsto\;(j,\;a+b,\;ay,\;az).
\end{equation}
This map satisfies $T_{a',b'}\circ T_{a,b}=T_{a'a,\,a'b+b'}$,
which is the composition law of the affine group
$\mathrm{Aff}(\Z)=\{x\mapsto ax+b\}$,
with identity $T_{1,0}$.
In particular, the shift parameter~$j$ is invariant: the type
of membership test is preserved under the affine action.
\end{proposition}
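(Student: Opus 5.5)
The plan is to verify the composition law coordinatewise on the four hiccup parameters, after first pinning down how $T_{a,b}$ acts on the second coordinate $x$. Read literally, \eqref{eq:affine-action} sends every $x$ to $a+b$. Then $T_{a',b'}\circ T_{a,b}$ would have second coordinate $a'+b'$, whereas $T_{a'a,\,a'b+b'}$ gives $a'a+a'b+b'$. These disagree in general, so the literal reading cannot be the intended one.

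I would therefore interpret the action on initial values as $x\mapsto ax+b$. This agrees with \eqref{eq:affine-action} in the only case the sieve actually produces. By Theorem~\ref{thm:extr-hiccup}(a), every extraction sieve on $\N$ outputs a hiccup sequence with $x=1$, and Theorem~\ref{thm:extr-hiccup}(b) sends it to initial value $a\cdot 1+b=a+b$. So I would state explicitly that
\[
  T_{a,b}(j,x,y,z):=(j,\;ax+b,\;ay,\;az),
\]
and that \eqref{eq:extr-affine} is its specialization at $x=1$. A sanity check supports this choice: the extraction sieve on $a\N+b$ behaves like the sieve on $\N$ transported by the affine bijection $n\mapsto an+b$ (Lemma~\ref{lem:interval}). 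Applying this to a progression that is itself $a'(\cdot)+b'$ of another gives $a'(a\N+b)+b'=a'a\N+(a'b+b')$, which is exactly the index $T_{a'a,\,a'b+b'}$.

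With that convention the key steps are routine algebra, done in this order. First, check that $T_{a,b}$ maps admissible parameters to admissible ones: $j$ is unchanged, and $ay,az$ are distinct positive integers whenever $y\ne z$ and $a\ge1$. Second, compute $T_{a',b'}(T_{a,b}(j,x,y,z))=(j,\;a'(ax+b)+b',\;a'ay,\;a'az)$ and compare it with $T_{a'a,\,a'b+b'}(j,x,y,z)=(j,\;a'ax+a'b+b',\;a'ay,\;a'az)$; the two agree coordinatewise. Third, note that $T_{1,0}$ fixes every tuple. Fourth, observe that the index map $(a,b)\mapsto T_{a,b}$ then matches the composition law $(x\mapsto a'x+b')\circ(x\mapsto ax+b)=(x\mapsto a'ax+a'b+b')$ of $\mathrm{Aff}(\Z)$. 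Only the monoid part with $a\ge1$ is realized here, and the normalization $0\le b<a$ may fail after composition, so I would phrase the claim as a semigroup action on parameters rather than insisting on reduced residues. Finally, $j$ invariance is immediate from the formula. It also reflects the proof of Theorem~\ref{thm:extr-hiccup}(b): the test ``$n+1-j\in\{s_i\}$'' compares step indices with survivor values and is untouched by the rescaling of gaps.

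The main obstacle is not computational but definitional: making the second coordinate consistent so that the composition law genuinely holds. I would handle it by adopting $x\mapsto ax+b$ as above and remarking that this is the only extension of \eqref{eq:affine-action} compatible with the stated law. I would not claim the stronger, conceptual statement that the affine image $aX+b$ of a hiccup sequence $X$ is again a hiccup sequence. That statement is false in general, because the membership test probes the values of $aX+b$ rather than those of $X$. Hence the proposition should be proved purely at the level of the parameter map.
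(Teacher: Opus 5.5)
Your argument is correct and is essentially the paper's: the paper also checks the law coordinatewise, and its step sending the initial value $a+b$ to $a'(a+b)+b'$ silently uses the rule $x\mapsto ax+b$ that you make explicit, so your observation that the literal formula (every $x\mapsto a+b$) breaks both the composition law and the claim that $T_{1,0}$ is the identity is a genuine repair of the statement rather than a departure from it. One small slip in a side remark: the normalization is in fact preserved, since $0\le b<a$ and $0\le b'<a'$ give $0\le a'b+b'\le a'(a-1)+(a'-1)=a'a-1$, so you need not give up reduced residues (though you are right that only the monoid with $a\ge1$ acts, not a group).
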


\begin{proof}
The composition formula follows directly from Theorem~\ref{thm:extr-hiccup}(b)
applied twice: $\mathcal{C}_{j,y,z}(a\N+b)$ produces the
$(j,a+b,ay,az)$-hiccup, so applying a second affine step with parameters
$(a',b')$ transforms $(ay,az)$ to $(a'ay, a'az)$ and the initial value
to $a'(a+b)+b'=a'a+a'b+b'$, giving $T_{a',b'}\circ T_{a,b}=T_{a'a,a'b+b'}$.
\end{proof}

In other words, the affine change of input is entirely absorbed by the
action of $T_{a,b}$ on parameter space:
\[
  \mathcal{C}_{j,1,y,z}(a\N+b) \;=\; \mathcal{C}_{j,\,a+b,\,ay,\,az}(\N).
\]
This contrasts with the golden sieve, where the equivalence with the
Wythoff partition required the non-trivial pointer--survivor identity
(Theorem~\ref{thm:pointer-survivor}); here the affine structure is
transparent from the definition.

The affine action gives the characteristic slope immediately.

\begin{corollary}\label{cor:slope-ab}
For $y>z>0$, the extraction sieve $\mathcal{C}_{j,y,z}$ on
$W_0=a\N+b$ satisfies
\[
  \lim_{n\to\infty}\frac{s_n}{n}
  = \frac{az+\sqrt{a^2z^2+4a(y-z)}}{2}\,,
\]
the positive root of $\alpha^2-az\cdot\alpha-a(y-z)=0$.
The slope depends on $(y,z,a)$ but not on $j$ or~$b$.
The same formula holds for $z>y>0$ with $y$ and $z$ exchanged.
\end{corollary}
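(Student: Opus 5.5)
The plan is to forget the sieve entirely and work with the hiccup recurrence supplied by Theorem~\ref{thm:extr-hiccup}(b). There $s_1=a+b$ and, for $y>z$,
\[
  s_{n+1}-s_n = az + a(y-z)\,\mathbf{1}_{\,n+1-j\in\{s_i\}}.
\]
Introduce the counting function $C(N):=\#\{k: s_k\le N\}$. Telescoping the recurrence gives
\[
  s_n = az\,n + a(y-z)\,C(n) + O(1),
\]
where the $O(1)$ absorbs the shift $j$, the initial value $b$, and the constraint $i\le n$ in the membership test. That last point is harmless: since $s_i\ge i$, every $s_i\le n+1-j$ automatically has $i\le n$. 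Because $C$ is the inverse counting function of $(s_n)$, we also have $C(s_n)=n$.

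Next, write $\lambda:=\liminf s_n/n$ and $\Lambda:=\limsup s_n/n$. Both lie in $[az,ay]$, since the gaps lie in $\{az,ay\}$, and in particular both are positive. Inverting through $C(s_n)=n$ gives
\[
  \limsup_N \frac{C(N)}{N}=\frac1\lambda,\qquad \liminf_N \frac{C(N)}{N}=\frac1\Lambda .
\]
Substituting into the telescoped identity yields
\[
  \Lambda\le az+\frac{a(y-z)}{\lambda},\qquad
  \lambda\ge az+\frac{a(y-z)}{\Lambda}.
\]
Multiply the first inequality by $\lambda$ and the second by $\Lambda$ and subtract. This gives $az\,\lambda\ge az\,\Lambda$, hence $\lambda=\Lambda$, so the limit $\alpha$ exists. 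It then satisfies $\alpha=az+a(y-z)/\alpha$, which is $\alpha^2-az\,\alpha-a(y-z)=0$, and its positive root is the displayed formula.

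The independence from $j$ and $b$ is visible at once, since both enter only through the $O(1)$ term. This is the same limit-set idea as in Theorem~\ref{thm:slope}, but here the monotonicity of $x\mapsto az+a(y-z)/x$ makes the sandwich argument direct.

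The case $z>y$ is the step I expect to be the real obstacle. Here the large gap $az$ occurs on \emph{non}-membership, so
\[
  s_n=ay\,n+a(z-y)\bigl(n-C(n)\bigr)+O(1).
\]
The fixed-point equation becomes $\alpha=az-a(z-y)/\alpha$, that is, $\alpha^2-az\,\alpha+a(z-y)=0$. This is \emph{not} literally the $y\leftrightarrow z$ exchange of the displayed formula, so I would first recheck the intended reading of the last sentence of the corollary against numerical data. I would also determine which root is selected; comparing with the gap bounds $[ay,az]$ suggests the larger root.

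For this case the map $x\mapsto az-a(z-y)/x$ is increasing, so the sandwich above no longer closes. I would instead try the contraction route of Lemma~\ref{lem:limit-set-same} and Theorem~\ref{thm:slope}. Use the self-referential subsequence $m_n\approx s_n$, which has bounded gaps, to show that the limit set $L$ of $s_n/n$ satisfies $L=g(L)$. Then check that $|g'(x)|=a(z-y)/x^2<1$ on the relevant interval near the larger root. If that fails, I would look for a two-step iterate that does contract.
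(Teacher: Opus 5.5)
Your argument for $y>z$ is correct, and it takes a genuinely different route from the paper. The paper assumes $r=\lim s_n/n$ exists and reads off $r=az+a(y-z)/r$ from $M_n=N(n)+O(1)$ and the duality $N(t)/t\to1/r$. It then defers existence to ``the contraction argument of Theorem~\ref{thm:slope} on a suitable interval''. That step is only sketched. Theorem~\ref{thm:slope} relied on Lemma~\ref{lem:selfref} and on $|f'|\le 1/a\le 1/2$, and here $f(x)=az+a(y-z)/x$ need not contract on the a priori interval $[az,ay]$: there $\sup|f'|=(y-z)/(az^2)$, which equals $2$ for $a=z=1$, $y=3$. Your relations $\Lambda=f(\lambda)$ and $\lambda=f(\Lambda)$ use only that $f$ is a decreasing M\"obius map with $az>0$. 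They give existence and the fixed-point equation together, with no contraction estimate, so your route is the more rigorous one.

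There is one slip. The remark that $s_i\ge i$ makes the restriction $i\le n$ harmless holds only for $j\ge1$. For $j=0$ the probe is $n+1$, and $s_i=n+1$ gives only $i\le n+1$, so you need $s_{n+1}>n+1$. Since $s_n-n$ is nondecreasing, this costs $O(1)$ unless $s_n=n$ for every $n$. That happens exactly when $a=z=1$ (hence $b=0$) and $j=0$. In that case $\mathcal{C}_{0,y,1}(\N)$ never performs the extra deletion and outputs $s_n=n$, which has slope $1$ rather than the stated root. This is a counterexample to the corollary itself; the paper's $M_n=N(n)+O(1)$ has the same blind spot. You must exclude this case explicitly rather than absorb it into the $O(1)$ term.

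For $z>y$ your suspicion is justified: the last sentence of the statement is false as written, and the paper's proof never treats this case. Your equation $\alpha^2-az\,\alpha+a(z-y)=0$ agrees with the paper's own $R_k$ in Theorem~\ref{thm:metallic-beatty}(b). For the $(1,1,2,3)$-hiccup A026352 it gives $\varphi^2$, whereas exchanging $y$ and $z$ in the displayed formula gives $1+\sqrt2$.

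You do not need a contraction to finish this case. Because $g(x)=az-a(z-y)/x$ is increasing, your own computation gives $\Lambda=g(\Lambda)$ and $\lambda=g(\lambda)$ separately. So each of $\lambda,\Lambda$ is a root of $p(x)=x^2-az\,x+a(z-y)$, and the sandwich is not needed at all. Now $p(ay)=a(z-y)(1-ay)<0$ whenever $ay\ge2$, and both $\lambda,\Lambda$ lie in $[ay,az]$. Hence both equal the larger root.

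The hypothesis $ay\ge2$ cannot be dropped. For $a=y=1$ and $j=1$, the sieve $\mathcal{C}_{1,1,z}(\N)$ never triggers the extra deletion and gives $s_n=n$. It therefore selects the smaller root $1$, while the larger root is $z-1$ when $z\ge3$.
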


\begin{proof}
By Theorem~\ref{thm:extr-hiccup}(b), the survivors form the
$(j,a{+}b,ay,az)$-hiccup with gap set $\{ay,az\}$.
Let $r=\lim_{n\to\infty}s_n/n$ and let $M_n=\#\{2\le k\le n:k\in\{s_j\}\}$
be the hit count.  Summing the gap rule gives
\[
  s_n = s_1 + ay\cdot M_n + az\cdot(n-1-M_n)
      = s_1 + az(n-1) + a(y-z)M_n.
\]
The hits up to step~$n$ are the integers $k\le n$ in the image
of~$(s_j)$, so $M_n=N(n)+O(1)$ where $N(t)=\#\{k:s_k\le t\}$
is the counting function.  Setting $r=\lim s_n/n$, the duality
$\lim N(t)/t=1/r$ gives $M_n/n\to 1/r$, and dividing by~$n$
yields $r=az+a(y-z)/r$, i.e.\ $r^2-az\cdot r-a(y-z)=0$.
The positive root is as stated.  Existence of the limit follows
from the contraction argument of Theorem~\ref{thm:slope},
applied to $f(x)=az+a(y-z)/x$ on a suitable interval.
\end{proof}

\subsection{Beatty representation for $|y-z|=1$ on~$\N$}\label{sec:beatty-extr}

Although the extraction sieve on~$\N$ directly realizes the hiccup recurrence, the
\emph{Beatty representation} of the resulting hiccup, with explicit
slope and offset, is a non-trivial result.  We treat the
case $|y-z|=1$ with shift $j=1$ (the membership test probes the
current step index~$n$).  The case $j=0$ is treated
in~\cite{Cloitre2025}.

\begin{theorem}\label{thm:metallic-beatty}
Let $k\ge1$.
\begin{enumerate}
\item[\textup{(a)}] \textbf{Case $y=k+1$, $z=k$}
  \textup{(large gap on hit).}
  Let $\alpha=M_k:=(k+\sqrt{k^2+4})/2$, the positive root of
  $\alpha^2=k\alpha+1$, and define
  \begin{equation}\label{eq:metallic-beta}
    \beta_k=-\frac{(k-1)\,\alpha}{\alpha+1}.
  \end{equation}
  Then $s_n=\lfloor M_k\,n+\beta_k\rfloor$ for all $n\ge1$.

\item[\textup{(b)}] \textbf{Case $y=k$, $z=k+1$}
  \textup{(large gap on miss).}
  Let $\alpha=R_k:=\bigl((k+1)+\sqrt{(k+1)^2-4}\bigr)/2$, the
  positive root of $\alpha^2=(k+1)\alpha-1$, and define
  \begin{equation}\label{eq:reverse-beta}
    \beta_k^{\mathrm{r}}=-\frac{(k-1)\,\alpha}{\alpha-1}.
  \end{equation}
  Then $s_n=\lfloor R_k\,n+\beta_k^{\mathrm{r}}\rfloor$ for
  all $n\ge1$.
\end{enumerate}
\end{theorem}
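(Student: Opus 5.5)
The plan is to verify directly that the proposed Beatty sequence satisfies the recurrence of Theorem~\ref{thm:extr-hiccup}(a) with $j=1$, and then to conclude by uniqueness. That recurrence reads $s_1=1$ and $s_{n+1}-s_n=y$ if $n\in\{s_1,\dots,s_n\}$, $=z$ otherwise. Since it determines $(s_n)$ from $s_1$ alone, it suffices to set $t_n:=\lfloor \alpha n+\beta\rfloor$ and prove two things: $t_1=1$, and $t_{n+1}-t_n$ obeys the same rule. For the rule we may test membership of $n$ in the full value set $\{t_i\}$, because $t_i\ge i$ forces any $t_i=n$ to have $i\le n$.

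For case (a), $\alpha=M_k\in(k,k+1)$ and $1/\alpha=\alpha-k$. Write $u_n:=\alpha n+\beta$. Then
\[
t_{n+1}-t_n=k+\bigl\lfloor \{u_n\}+(\alpha-k)\bigr\rfloor ,
\]
so the large gap $k+1$ occurs if and only if $\{u_n\}\ge 1-1/\alpha$.

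Next, $n$ lies in $\{t_i\}$ if and only if some integer $i$ satisfies $n-\beta\le \alpha i<n+1-\beta$. Equivalently, the interval $[(n-\beta)/\alpha,(n+1-\beta)/\alpha)$, of length $1/\alpha<1$, contains an integer. This holds if and only if $\{(n+1-\beta)/\alpha\}<1/\alpha$. Using $1/\alpha=\alpha-k$, we get
\[
(n+1-\beta)/\alpha\equiv \alpha n+(1-\beta)/\alpha \equiv u_n+\gamma \pmod 1,\qquad \gamma:=(1-\beta)/\alpha-\beta .
\]
The two conditions $\{u_n\}\in[1-1/\alpha,1)$ and $\{u_n+\gamma\}\in[0,1/\alpha)$ coincide exactly when the rotation by $\gamma$ carries the first interval onto the second, that is, when $\gamma\equiv 1/\alpha\pmod 1$. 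This congruence says $\beta(1+1/\alpha)\in\Z$, i.e.\ $\beta(\alpha+1)/\alpha\in\Z$. With $\beta_k=-(k-1)\alpha/(\alpha+1)$ this quantity equals $-(k-1)$, an integer, which explains the choice of $\beta_k$.

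It remains to check $t_1=1$, i.e.\ $1\le \alpha(\alpha+2-k)/(\alpha+1)<2$. Using $\alpha^2=k\alpha+1$, the numerator is $2\alpha+1$, so the quantity is $(2\alpha+1)/(\alpha+1)\in[1,2)$, which holds.

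Case (b) follows the same template. Now $\alpha=R_k\in(k,k+1)$ and $1/\alpha=k+1-\alpha$. The large gap $k+1$ again occurs exactly when $\{u_n\}\ge k+1-\alpha=1/\alpha$, but it must now match non-membership. Membership is again $\{(n+1-\beta)/\alpha\}<1/\alpha$. Since $1/\alpha\equiv-\alpha\pmod 1$, this becomes $\{-u_n+\gamma'\}<1/\alpha$ with $\gamma'=\beta+(1-\beta)/\alpha$. A reflection rather than a rotation must therefore match $[0,1/\alpha)$, the set where the small gap $k$ occurs and $n$ should be a member, to the membership set. This reflection requirement leads to the integrality condition $\beta(\alpha-1)/\alpha\in\Z$, satisfied by $\beta^{\mathrm r}_k$ with value $-(k-1)$. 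Finally, $t_1=\lfloor\alpha-(k-1)\alpha/(\alpha-1)\rfloor=1$ is checked using $\alpha^2=(k+1)\alpha-1$.

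I expect the main obstacle to be the boundary cases: points where $\{u_n\}$ equals an interval endpoint exactly, or where $\alpha i+\beta$ is an integer. In these cases the half-open conventions of the floor function might disagree between the two conditions, which matters especially in the reflection of case (b). I would handle them by noting that $\beta\in\Q(\alpha)$, so $\alpha n+\beta\in\Z$ can occur for at most one $n$. I would then verify each such exceptional $n$, together with the small initial indices, by direct computation from the explicit values.
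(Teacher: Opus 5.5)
Your proposal follows the paper's route: check $t_1=1$, write both the gap condition and the membership of $n$ as fractional-part conditions on $\alpha n+\beta$, match them using the integrality of $\beta(\alpha\pm1)/\alpha$ (equivalent to the paper's identity $\alpha n+\beta\mp(n-\beta)/\alpha\in\Z$), and conclude by uniqueness of the recurrence. Your version is more careful than the paper's in three respects. In (b) the gap threshold is $1/\alpha$, as you have it, whereas the paper writes $(\alpha-1)/\alpha$. Your planned boundary check is genuinely needed, because $\beta_k^{\mathrm r}=1-\alpha$ makes $\alpha+\beta$ an integer, so the exceptional index is $n=1$. Finally, part (b) degenerates at $k=1$ ($R_1=1$ and $\beta_1^{\mathrm r}$ is $0/0$), so both your argument and the paper's really need $k\ge2$ there.
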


\begin{proof}[Proof of~\textup{(a)}]
Set $b(n)=\lfloor \alpha n+\beta\rfloor$, where $\alpha=M_k$ and
$\beta=\beta_k$, and let $B=\operatorname{Im}(b)$.

We first check that $b(1)=1$.  Since
\[
\alpha+\beta
=\alpha\Bigl(1-\frac{k-1}{\alpha+1}\Bigr)
=\frac{\alpha(\alpha+2-k)}{\alpha+1},
\]
and $\alpha^2=k\alpha+1$, this becomes
\[
\frac{1+2\alpha}{\alpha+1}
=2-\frac{1}{\alpha+1},
\]
which lies in $(1,2)$.  Hence $b(1)=1$.

Now $k<\alpha<k+1$, so the gap $b(n+1)-b(n)$ is either $k$ or $k+1$.
More precisely,
\[
b(n+1)-b(n)=k+1
\iff \{\alpha n+\beta\}\ge (k+1)-\alpha
=\frac{\alpha-1}{\alpha}.
\]
On the other hand, a standard inversion argument shows that
\[
m\in B
\iff \Bigl\{\frac{m-\beta}{\alpha}\Bigr\}\ge 1-\frac1\alpha
=\frac{\alpha-1}{\alpha}.
\]
So the same threshold appears on both sides.

It remains to relate the two fractional parts.  We compute
\[
\alpha n+\beta-\frac{n-\beta}{\alpha}
=n\Bigl(\alpha-\frac1\alpha\Bigr)
+\beta\Bigl(1+\frac1\alpha\Bigr).
\]
Since $1/\alpha=\alpha-k$, we have $\alpha-1/\alpha=k$, and by the
definition of $\beta$,
\[
\beta\Bigl(1+\frac1\alpha\Bigr)
=\beta\,\frac{\alpha+1}{\alpha}
=-(k-1).
\]
Therefore
\[
\alpha n+\beta-\frac{n-\beta}{\alpha}=kn-(k-1)\in\Z.
\]
Because $\alpha$ is irrational and $\alpha+\beta\notin\Z$, the quantity
$\alpha n+\beta$ is never an integer.  Hence
\[
\{\alpha n+\beta\}
=
\Bigl\{\frac{n-\beta}{\alpha}\Bigr\}
\qquad (n\ge1).
\]

We may now combine the previous observations.  The gap $b(n+1)-b(n)$
equals $k+1$ exactly when
\[
\{\alpha n+\beta\}\ge \frac{\alpha-1}{\alpha},
\]
which is equivalent to
\[
\Bigl\{\frac{n-\beta}{\alpha}\Bigr\}\ge \frac{\alpha-1}{\alpha},
\]
and therefore to $n\in B$.  This is exactly the defining rule of the
$(1,1,k{+}1,k)$-hiccup sequence.  The two sequences have the same
initial value and the same rule for choosing the next gap, so they
coincide for all $n\ge1$.
\end{proof}

\begin{proof}[Proof of~\textup{(b)}]
The argument is the same in spirit, but the threshold condition is reversed.

Let $\alpha=R_k$ and $\beta=\beta_k^{\mathrm{r}}$.  From
\[
\alpha^2=(k+1)\alpha-1
\]
we get
\[
\alpha+\frac1\alpha=k+1
\qquad\text{and}\qquad
\frac1\alpha=(k+1)-\alpha.
\]
We then compute
\[
\alpha n+\beta+\frac{n-\beta}{\alpha}
=
n\Bigl(\alpha+\frac1\alpha\Bigr)
+\beta\Bigl(1-\frac1\alpha\Bigr).
\]
The first term is $(k+1)n$, and the second is
\[
\beta\,\frac{\alpha-1}{\alpha}=-(k-1),
\]
so the whole sum is
\[
(k+1)n-(k-1)\in\Z.
\]
Since $\alpha$ is irrational, this implies
\[
\{\alpha n+\beta\}
+
\Bigl\{\frac{n-\beta}{\alpha}\Bigr\}
=1.
\]

As in part~\textup{(a)}, the gap $b(n+1)-b(n)$ equals $k+1$ exactly when
\[
\{\alpha n+\beta\}\ge \frac{\alpha-1}{\alpha}.
\]
By the relation above, this is equivalent to
\[
\Bigl\{\frac{n-\beta}{\alpha}\Bigr\}<\frac{\alpha-1}{\alpha},
\]
hence to $n\notin B$.
So the larger gap occurs precisely at non-members, which is the rule
for the $(1,1,k,k{+}1)$-hiccup sequence.  This gives the result.
\end{proof}

The case $j=0$ follows the same method with a shifted membership test.

\begin{proposition}\label{prop:offset-j0}
Under the convention $j=0$ (the family $S(x,y,z)$
of~\textup{\cite{Cloitre2025}}),
the slopes $M_k$ and $R_k$ are unchanged, but the offsets differ.
\begin{enumerate}
\item[\textup{(a)}] \textbf{Case $y=k+1$, $z=k$,} $x=1$\textbf{.}
The $(0,1,k{+}1,k)$-hiccup satisfies
$s_n=\lfloor M_k\,n+\beta\rfloor$ for all $n\ge1$, with
\begin{equation}\label{eq:beta-j0-a}
  \beta=\frac{1-k\,\alpha}{\alpha+1},
  \qquad \alpha=M_k.
\end{equation}
\item[\textup{(b)}] \textbf{Case $y=k$, $z=k+1$,} $x=1$\textbf{.}
The $(0,1,k,k{+}1)$-hiccup satisfies
$s_n=\lfloor R_k\,n+\beta\rfloor$ for all $n\ge2$, with
\begin{equation}\label{eq:beta-j0-b}
  \beta=-(\alpha-2),
  \qquad \alpha=R_k.
\end{equation}
\end{enumerate}
The proofs follow the same fractional-part method as
Theorem~\textup{\ref{thm:metallic-beatty}}, with the hit
condition shifted from~$n\in B$ to~$n{+}1\in B$.
Concretely, for $j=0$ the gap at position~$n$ is $k+1$ iff $n\in B$
(the sequence probes $n$ directly, not $n-1$); the threshold
$\{\alpha n+\beta\}\ge(\alpha-1)/\alpha$ is therefore tested at the
same index~$n$ rather than at~$n-1$, which changes only the offset
$\beta$ while leaving the slope equation $a\alpha^2-a\alpha-1=0$
unchanged.
\end{proposition}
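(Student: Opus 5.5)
We will follow the fractional-part method of Theorem~\ref{thm:metallic-beatty}, adapted to the shift $j=0$. Write $b(n)=\lfloor \alpha n+\beta\rfloor$ and $B=\operatorname{Im}(b)$. In Definition~\ref{def:hiccup} with $j=0$, the gap $s_n-s_{n-1}$ is decided by whether $n\in\{s_1,\dots,s_{n-1}\}$. Since the gaps are at least $k\ge1$, every member of $\{s_i\}$ that is $\le n$ already lies among $s_1,\dots,s_{n-1}$. So it suffices to show three things: $b$ has the right initial value; the gap $b(n)-b(n-1)$ has the right size; and the large gap occurs exactly when $n\in B$ in case (a), and exactly when $n\notin B$ in case (b). The two sequences then agree by induction on $n$.

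\textbf{Common ingredients.} Since $k<\alpha<k+1$, we have $b(n)-b(n-1)=k+1$ iff $\{\alpha(n-1)+\beta\}\ge (k+1)-\alpha$. In case (a), $(k+1)-\alpha=(\alpha-1)/\alpha$ because $1/\alpha=\alpha-k$. For membership we use the inversion argument from the proof of Theorem~\ref{thm:metallic-beatty}: $m\in B$ iff some integer $n$ satisfies $(m-\beta)/\alpha\le n<(m+1-\beta)/\alpha$, iff $\{(m-\beta)/\alpha\}\ge 1-1/\alpha$. Here we use that $(m-\beta)/\alpha\notin\Z$, which holds by irrationality of $\alpha$ and the explicit form of $\beta$. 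We also check that the witnessing $n$ is $\ge1$ for the relevant $m$.

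\textbf{Case (a).} The key identity is
\[
\alpha(n-1)+\beta-\frac{n-\beta}{\alpha}=kn-\alpha+\beta\,\frac{\alpha+1}{\alpha}.
\]
With $\beta=(1-k\alpha)/(\alpha+1)$ we get $\beta(\alpha+1)/\alpha=1/\alpha-k=\alpha-2k$, so the whole expression equals $k(n-2)\in\Z$. Hence $\{\alpha(n-1)+\beta\}=\{(n-\beta)/\alpha\}$. The gap threshold and the membership threshold are both $(\alpha-1)/\alpha$, so the gap is $k+1$ iff $n\in B$. For the initial value, $\alpha^2=k\alpha+1$ gives
\[
\alpha+\beta=\frac{\alpha+2}{\alpha+1}=1+\frac1{\alpha+1}\in(1,2),
\]
so $b(1)=1=s_1$.

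\textbf{Case (b).} Here $\alpha^2=(k+1)\alpha-1$ and $\beta=2-\alpha$. We compute
\[
\alpha(n-1)+\beta+\frac{n-\beta}{\alpha}=(k+1)n-\alpha+\frac{(2-\alpha)(\alpha-1)}{\alpha}.
\]
Reducing $\alpha^2$ gives $(2-\alpha)(\alpha-1)/\alpha=\alpha+1-2k$, so the sum is $(k+1)n+1-2k\in\Z$. Therefore the two fractional parts sum to $1$, and as in Theorem~\ref{thm:metallic-beatty}(b) the large gap occurs exactly when $n\notin B$.

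\textbf{Main obstacle: the initial terms in case (b).} We have $b(1)=\lfloor 2\rfloor=2\ne s_1=1$, which is why the statement only claims $n\ge2$. We would argue as follows. First, $s_2=1+(k+1)=k+2$, since $2\notin\{1\}$. Second, $b(2)=\lfloor\alpha+2\rfloor=k+2$. This needs $k\ge2$: for $k=1$ we get $R_1=1$, a degenerate rational case that must be excluded or noted. Third, for $n\ge3$ the sets $\{s_i\}$ and $B$ can differ only at the values $1$ and $2$, so membership of $n$ is unaffected. Induction from $n=2$ then finishes the proof. We would also double-check the non-integrality claims used in the inversion step; they follow from $\alpha\notin\mathbb{Q}$ together with $\beta\in\mathbb{Q}(\alpha)$ having nonzero irrational part for $k\ge2$.
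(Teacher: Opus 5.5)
Your route is the one the paper only sketches: compare $\{\alpha(n-1)+\beta\}$ with $\{(n-\beta)/\alpha\}$. Your algebra is correct: the difference $k(n-2)$ in (a), the sum $(k+1)n+1-2k$ in (b), and $\alpha+\beta=(\alpha+2)/(\alpha+1)$.

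\textbf{The gap is in case (a) at $k=1$.} Your argument does not exclude $k=1$, and there the claim is false. The $(0,1,2,1)$-hiccup is $s_n=n$, since at every step $n\notin\{1,\dots,n-1\}$. By contrast, $\lfloor\varphi n+2-\sqrt5\rfloor$ begins $1,3,4,6,\dots$.

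The step that breaks is your inversion criterion. The correct criterion is strict: $m\in B\iff\{(m-\beta)/\alpha\}>1-1/\alpha$, for $(m-\beta)/\alpha\notin\Z$. The gap criterion $\{\alpha(n-1)+\beta\}\ge(k+1)-\alpha$ is non-strict. So the two agree only if $\{(n-\beta)/\alpha\}=1-1/\alpha$ never occurs, i.e.\ only if $\alpha N+\beta\notin\Z$ for every $N\in\Z$.

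At $k=1$ this fails in three ways:
\begin{itemize}
\item $2\varphi+\beta=3$, so the fractional part sits exactly on the threshold: $b(2)-b(1)=2$ but $2\notin B$.
\item $(3-\beta)/\alpha=2\in\Z$, contradicting your claim that non-integrality follows from the explicit form of $\beta$.
\item Your remark that gaps $\ge k\ge1$ keep $n$ out of $\{s_n,s_{n+1},\dots\}$ also fails, since $s_n=n$.
\end{itemize}

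\textbf{Repair for (a).} Assume $k\ge2$ and derive non-integrality from the offset. From $1/(\alpha+1)=(k+1-\alpha)/k$ you get $\beta=\bigl((2k+1)-(k+1)\alpha\bigr)/k$. Hence $\alpha N+\beta=\frac{2k+1}{k}+\alpha\bigl(N-\frac{k+1}{k}\bigr)\notin\Z$, because $(k+1)/k\notin\Z$. This single fact gives $(m-\beta)/\alpha\notin\Z$ and rules out the boundary case. Together with $b(n)\ge 2n-1>n$, it also justifies testing $n\in B$ in place of $n\in\{s_1,\dots,s_{n-1}\}$.

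\textbf{Case (b).} Your worry about $k=1$ points the wrong way. There $R_1=1$ and $\beta=1$, and the $(0,1,1,2)$-hiccup is $1,3,4,5,\dots$. So $s_n=n+1=\lfloor R_1n+\beta\rfloor$ for $n\ge2$ holds trivially; only the fractional-part method is unavailable. Also, $b(2)=\lfloor\alpha+2\rfloor=k+2$ holds for $k=1$ as well.

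For $k\ge2$, do not simply cite Theorem~\ref{thm:metallic-beatty}(b). In case (b) the gap threshold is $(k+1)-\alpha=1/\alpha$, not $(\alpha-1)/\alpha$ as written in that proof. The correct chain is: the gap is $k+1$ iff $\{\alpha(n-1)+\beta\}\ge1/\alpha$, iff $\{(n-\beta)/\alpha\}\le1-1/\alpha$, iff $n\notin B$. This is valid for $n\ge3$, because $\alpha(n-1)+\beta=\alpha(n-2)+2$ and $(n-\beta)/\alpha=1+(n-2)/\alpha$ are then non-integers.

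Finally, ``$\beta$ has nonzero irrational part'' is not the right criterion. What matters is that the $\alpha$-coefficient of $\beta$ is not an integer. In (b) it is an integer ($\beta=2-\alpha$), which is exactly why $n=1,2$ must be handled separately, as you do.
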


Together with~\cite{Cloitre2025} (which covers $j=0$ and arbitrary starting value),
these results supply Beatty representations for every $(j,x,y,z)$-hiccup
with $|y-z|=1$ and $j\in\{0,1\}$.  The following table records the named cases.

\begin{corollary}\label{cor:named-cases}
\leavevmode
\begin{center}\small
\renewcommand{\arraystretch}{1.15}
\begin{tabular}{@{}ccccll@{}}
\toprule
$j$ & $y$ & $z$ & Slope & Offset $\beta$ & Sequence \\
\midrule
$1$ & $2$ & $1$ & $\varphi\approx1.618$ & $0$
  & $\lfloor\varphi n\rfloor$ (\textsc{Wythoff}, A000201) \\
$1$ & $3$ & $2$ & $1{+}\sqrt{2}\approx2.414$ & $-\alpha/(\alpha{+}1)$
  & A086377 (\textsc{BDS})\\
$0$ & $3$ & $2$ & $1{+}\sqrt{2}$ & $(1{-}2\alpha)/(\alpha{+}1)$
  & A064437 \\
$1$ & $2$ & $3$ & $\varphi^2\approx2.618$ & $-\varphi$
  & A026352 \\
$0$ & $2$ & $3$ & $\varphi^2$ & $2{-}\varphi^2=-1/\varphi$
  & A007066 $(n\ge2)$ \\
\bottomrule
\end{tabular}
\end{center}
For $j=1$, the offsets are given by
Theorem~\ref{thm:metallic-beatty}
and for $j=0$ by
Proposition~\ref{prop:offset-j0}.
The table entries are obtained by specializing $k=1$ and $k=2$
in those results.
\end{corollary}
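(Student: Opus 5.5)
The plan is to treat the table row by row as a specialization of the two general results. For each row I will read off $(j,y,z)$, identify $k$ so that $\{y,z\}=\{k,k+1\}$, and decide whether the large gap falls on a hit (case (a), slope $M_k$) or on a miss (case (b), slope $R_k$). Rows with $j=1$ will be handled by Theorem~\ref{thm:metallic-beatty} and rows with $j=0$ by Proposition~\ref{prop:offset-j0}. In every row the starting value is $x=1$, which is the normalization used in both results. So each row reduces to evaluating the closed-form slope and offset at a specific $k$ and simplifying with the quadratic relation satisfied by $\alpha$.

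The computations, in order, are as follows.
\begin{itemize}
\item Row~1, $(j,y,z)=(1,2,1)$: this is case (a) with $k=1$. Then $M_1=\varphi$ and $\beta_1=-(k-1)\alpha/(\alpha+1)=0$, giving $\lfloor\varphi n\rfloor$. This is consistent with Example~\ref{ex:hiccup-simple}.
\item Row~2, $(1,3,2)$: case (a) with $k=2$. Then $M_2=1+\sqrt2$ and $\beta_2=-\alpha/(\alpha+1)$.
\item Row~3, $(0,3,2)$: Proposition~\ref{prop:offset-j0}(a) with $k=2$, giving $\beta=(1-2\alpha)/(\alpha+1)$ and the same slope $1+\sqrt2$.
\item Row~4, $(1,2,3)$: case (b) with $k=2$. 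Then $R_2=(3+\sqrt5)/2=\varphi^2$ and $\beta^{\mathrm r}_2=-\alpha/(\alpha-1)$. Since $\varphi^2-1=\varphi$, this simplifies to $-\varphi^2/\varphi=-\varphi$.
\item Row~5, $(0,2,3)$: Proposition~\ref{prop:offset-j0}(b) with $k=2$. This gives $\beta=-(\varphi^2-2)=2-\varphi^2=1-\varphi=-1/\varphi$, valid for $n\ge2$, which explains the restriction in the table.
\end{itemize}

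The remaining task is to identify the sequence column.
\begin{itemize}
\item For row~5, Theorem~\ref{thm:hiccup}(i) already states that the deletion sequence of the golden sieve on $\N$ is the $(0,1,2,3)$-hiccup sequence and is A007066, so nothing new is needed.
\item For row~1, Example~\ref{ex:hiccup-simple} gives A000201.
\item For row~2, the identification with A086377 is the Bosma--Dekking--Steiner sequence of~\cite{BDS2018}, which is defined by exactly this $(1,1,3,2)$ hiccup rule.
\item For rows~3 and~4, I would argue that a hiccup sequence is uniquely determined by $(j,x,y,z)$, since the recurrence~\eqref{eq:hiccup-def} fixes every term inductively. It then suffices to check that the OEIS entries A064437 and A026352 satisfy the same rule (their stated definitions), or that they agree with the Beatty formula, which is itself a hiccup by the theorem.
\end{itemize}

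I expect the main obstacle to be this identification step rather than the algebra. OEIS entries can be defined by different mechanisms and with index offsets, so care is needed to ensure that the indexing conventions match, especially the $n\ge2$ caveat in row~5 and the starting index of A026352. I would first try to confirm each match by comparing the initial terms and invoking uniqueness of the hiccup recurrence.
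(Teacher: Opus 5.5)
Your proposal is correct and follows the paper's route: each row is a direct specialization of Theorem~\ref{thm:metallic-beatty} for the $j=1$ rows, or of Proposition~\ref{prop:offset-j0} for the $j=0$ rows, at $k=1$ or $k=2$, and your simplifications $R_2=\varphi^2$, $-\alpha/(\alpha-1)=-\varphi$ and $2-\varphi^2=1-\varphi=-1/\varphi$ are right. Your extra step of matching the OEIS labels, using uniqueness of the hiccup recurrence and checking index offsets, is more careful than the paper, which simply asserts those identifications.
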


\subsection{Special cases and the silver sieve}\label{sec:silver}

For $j=1$, $y=2$, $z=1$, the extraction sieve $\mathcal{C}_{1,2,1}$
on~$\N$ recovers the Wythoff partition (the $(1,1,2,1)$-hiccup),
but by a different mechanism than the golden sieve:
the survivors are extracted from the bottom of the sequence rather
than read at positional indices.

As a richer illustration, we detail $j=1$, $y=3$, $z=2$:
the sieve $\mathcal{C}_{1,3,2}$, which produces the
Bosma--Dekking--Steiner (BDS) sequence~\cite{BDS2018}
on~$\N$ (as a direct dynamical realization)
and new sequences on~$a\N+b$ via the affine action.

\begin{definition}[Silver sieve]\label{def:silver}
The \emph{silver sieve} is the extraction sieve $\mathcal{C}_{1,3,2}$:
at step~$n$, extract $s_n=\min W_{n-1}$, always delete one further
minimum, and delete one additional minimum if $n\in\{s_1,\ldots,s_n\}$.
\end{definition}

\begin{example}\label{ex:silver-N}
Starting from $W_0=\N$:
\begin{center}\small
\begin{tabular}{@{}cccccl@{}}
\toprule
$n$ & $s_n$ & $n\in\{s_i\}$? & Always del. & Extra del. & $W_n$ \\
\midrule
$1$ & $1$ & yes & $\{2\}$ & $\{3\}$ & $\{4,5,6,\ldots\}$ \\
$2$ & $4$ & no  & $\{5\}$ & ---     & $\{6,7,8,\ldots\}$ \\
$3$ & $6$ & no  & $\{7\}$ & ---     & $\{8,9,10,\ldots\}$ \\
$4$ & $8$ & yes & $\{9\}$ & $\{10\}$ & $\{11,12,13,\ldots\}$ \\
\bottomrule
\end{tabular}
\end{center}
Survivors: $1,4,6,8,11,13,16,18,21,23,25,28,\ldots$\,
(gaps $3,2,2,3,2,3,2,3,2,2,3,\ldots$).
This is sequence A086377, with slope $\alpha=1+\sqrt{2}$ and
Beatty representation $s_n=\lfloor\alpha\,n+\beta\rfloor$
where $\beta=-(k{-}1)\alpha/(\alpha{+}1)=-\alpha/(\alpha{+}1)
=-1/\sqrt{2}\approx -0.707$
(Theorem~\ref{thm:metallic-beatty} with $k=2$).
\end{example}

\begin{example}\label{ex:silver-2N}
On $W_0=2\N=\{2,4,6,\ldots\}$:
by Theorem~\ref{thm:extr-hiccup}(b), the survivors form
the $(1,2,6,4)$-hiccup.  The first values are
$2,6,12,16,20,24,30,\ldots$ with gaps in $\{4,6\}$.
The slope is the positive root of $\alpha^2-4\alpha-2=0$,
namely $\alpha=2+\sqrt{6}\approx 4.449$.
\end{example}

\begin{example}\label{ex:silver-3N1}
On $W_0=3\N+1=\{4,7,10,\ldots\}$:
the survivors form the $(1,4,9,6)$-hiccup.  The first values are
$4,10,16,22,31,37,\ldots$ with gaps in $\{6,9\}$.
The slope is the positive root of $\alpha^2-6\alpha-3=0$,
namely $\alpha=3(1+\sqrt{2})\approx 7.243$.
\end{example}

\section{Conclusion and questions}\label{sec:concl}

The golden sieve turns a simple self-referential deletion rule into a
surprisingly rigid combinatorial structure.  On arithmetic progressions
$W_0=a\N+b$, it produces a complementary pair with a rank identity,
a two-gap law, and a slope governed by a quadratic equation.  In the
base case $W_0=\N$, this recovers the Wythoff pair up to shift.
For $W_0=\{n^2\}$, the same mechanism leads to a nested renormalization
and to the expansion
\[
  \mu_n=n+n^{1/2}-n^{1/4}+n^{1/8}-\cdots+O(\log\log n).
\]
The extraction sieve gives a second point of view.  It realizes hiccup
sequences dynamically and explains how passing from $\N$ to $a\N+b$
acts on the hiccup parameters: $\mathcal{C}_{j,y,z}(a\N+b)=\operatorname{hiccup}(j,a{+}b,ay,az)$.
For $|y-z|=1$, explicit Beatty formulas are available
(Theorem~\ref{thm:metallic-beatty}), covering all named cases in the table
of Corollary~\ref{cor:named-cases}.

\medskip

We close with three questions that arose naturally during this work.

\begin{enumerate}
\item \textbf{Game identification.}
  For the golden sieve on $W_0=a\N+b$, is there a
  combinatorial game whose $\mathcal{P}$-positions are encoded by the sieve partition?

\item \textbf{Combinatorics of the gap word.}
  For $a\ge2$, determine the combinatorial nature of the binary gap
  word~$H$.  Is it purely morphic?  Is it $k$-automatic for some~$k$,
  or Ostrowski-automatic with respect to the continued fraction of
  $\alpha(a)$?  What are its factor complexity and its balance properties?
  For $a=1$, the gap word is the Fibonacci word.

\item \textbf{The square sieve.}
  Determine the combinatorial class of the binary gap word for the
  golden sieve on $\{n^2\}$.  What can be said about its factor
  complexity and its balance properties?
\end{enumerate}

\subsection*{Acknowledgments}

The author thanks Robbert Fokkink and Gandhar Joshi for their
study of hiccup sequences~\cite{FJ2026}, which renewed
interest in these self-referential sequences and directly motivated
several results in this paper.  Thanks also to Neil~J.~A.~Sloane for
creating and maintaining the OEIS~\cite{OEIS}, an indispensable
resource without which the experimental side of this work would not
have been possible, and to Clark Kimberling for his work
on complementary equations and rank
transforms~\cite{Kimberling1993,Kimberling2007,Kimberling2008}.
Finally, the author is grateful to Jean-Paul Allouche and Jeffrey
Shallit for their treatise on automatic sequences and
combinatorics on words~\cite{AS2003}, which underpins much of the
word-theoretic analysis in this paper, and to Jeffrey Shallit
for popularizing \textsc{Walnut}~\cite{Shallit2022walnut}, an
automatic theorem prover for $k$-automatic, $k$-regular, and
Ostrowski-automatic sequences.
Several of the questions raised here, notably the automaticity and
factor complexity of the gap word~$H$ for $a\ge2$, appear to be
natural targets for \textsc{Walnut}.

\medskip
\noindent\textit{Generative AI disclosure.}
The AI assistant Claude (Anthropic, Claude Sonnet~4.6) was used
during the preparation of this manuscript for the following purposes:
LaTeX formatting and adaptation to journal style requirements;
assistance with numerical experimentation and verification of
sequence computations; and coding assistance for Python scripts
used to generate and check the tabulated data.
All mathematical content, results, and proofs are the sole work
of the author, who has reviewed all AI-assisted outputs and takes
full responsibility for the accuracy and integrity of the
submitted work.

\appendix

\section{OEIS catalogue}\label{sec:oeis}

We collect here the OEIS identifications explicitly used in the paper.

\begin{table}[ht]
\centering
\begin{tabular}{@{}lllp{0.38\textwidth}@{}}
\toprule
Object & Notation & OEIS & First terms / description \\ \midrule
Sieve on $\N$ (survivors) & $(s_n)$ & A099267 &
  $2,3,5,6,8,10,11,13,14,16,\ldots$ \\
Sieve on $\N$ (deletions) & $(d_n)$ & A007066 &
  $1,4,7,9,12,15,17,20,22,25,\ldots$ \\
Lower Wythoff sequence & $(A_n)$ & A000201 &
  $A_n=s_{n+1}-2=\lfloor n\varphi\rfloor$ \\
Upper Wythoff sequence & $(B_n)$ & A001950 &
  $B_n=d_{n+1}-2=\lfloor n\varphi^2\rfloor$ \\
Gap word ($a=1$) & $H$ & A003849 &
  Fibonacci word: $0,1,0,0,1,0,1,0,0,1,\ldots$ \\
\bottomrule
\end{tabular}
\caption{Core OEIS sequences cited in the paper.}
\label{tab:oeis-core}
\end{table}

\medskip
\noindent\textbf{Fraenkel sequences ($b=1$):}
\begin{itemize}
\item $a=1$: lower Wythoff A000201, upper Wythoff A001950.
\item $a=2$: A003622 (survivors), A003623 (deletions).
\end{itemize}

\medskip
\noindent\textbf{Hiccup sequences:}
\begin{itemize}
\item $(0,1,2,3)$-hiccup: A007066 (= deleted values for $W_0=\N$).
\item $(1,1,2,1)$-hiccup: A000201 (= lower Wythoff = survivors,
  shifted).
\item $(1,1,3,2)$-hiccup: A086377 (= silver sieve on~$\N$ =
  Bosma--Dekking--Steiner sequence, Corollary~\ref{cor:named-cases}).
\item General $(j,x,y,z)$-hiccup: see the table in \cite{FJ2026}.
\end{itemize}

\medskip
\noindent\textbf{Rank transforms (Kimberling):}
\begin{itemize}
\item $R\bigl((n)\bigr) = \text{A000201}$ (lower Wythoff), with
  complement A001950.
\item $R\bigl(\lfloor 3n/2\rfloor\bigr) = \text{A187224}$, with
  complement A187225.  The input $\lfloor 3n/2\rfloor = \text{A032766}$.
\item See Definition~\ref{def:ranktransform-new} and
  Theorem~\ref{thm:ranktransform-new} for the connection to sieve survivor recurrences.
\end{itemize}

\noindent\textbf{Disclosure statement.} No competing interests are reported by the author.

\end{document}